\newtheorem{theorem}{Theorem}[section]
\newtheorem{lemma}[theorem]{Lemma}
\newtheorem{proposition}[theorem]{Proposition}
\newtheorem{corollary}[theorem]{Corollary}
\theoremstyle{definition}
\newtheorem{definition}[theorem]{Definition}
\theoremstyle{remark}
\newtheorem{remark}[theorem]{Remark}
\numberwithin{equation}{section}
\newcommand{\R}{\ensuremath{\mathbb{R}}}
\newcommand{\N}{\ensuremath{\mathbb{N}}}
\renewcommand{\L}{\ensuremath{\mathcal{L}}}
\newcommand{\su}{\mathbf{u}}
\newcommand{\sv}{\mathbf{v}}
\newcommand{\sw}{ {\mathbf{w}}}
\newcommand{\sx}{ {\mathbf{x}}}
\newcommand{\sy}{ {\mathbf{y}}}
\renewcommand{\c}{ {\mathbf{c}}}
\renewcommand{\d}{ {\mathbf{d}}}
\newcommand{\bb}{\mathscr{B}}
\renewcommand{\u}{\ensuremath{\mathcal{U}}}
\newcommand{\ub}{\mathscr{U}}
\newcommand{\us}{\mathbf{U}}
\newcommand{\vs}{ {\mathbf{V}}}
\newcommand{\set}[1]{\left\{#1\right\}}
\newcommand{\la}{\lambda}
\newcommand{\f}{\infty}
\newcommand{\al}{\alpha}
\newcommand{\lle}{\preccurlyeq}
\newcommand{\lge}{\succcurlyeq}
\newcommand{\si}{\sigma}
\newcommand{\ra}{\rightarrow}
\begin{document}
\title{On the continuity of the Hausdorff dimension of the univoque set}

\author{Pieter Allaart}
\address[P. Allaart]{Mathematics Department, University of North Texas, 1155 Union Cir \#311430, Denton, TX 76203-5017, U.S.A.}
\email{allaart@unt.edu}

\author{Derong Kong}
\address[D. Kong]{Mathematical Institute, University of Leiden, PO Box 9512, 2300 RA Leiden, The Netherlands}
\email{d.kong@math.leidenuniv.nl}

\date{\today}
\dedicatory{}


\subjclass[2010]{Primary:11A63, Secondary: 37B10, 28A78}
\begin{abstract}
In a recent paper [{\em Adv. Math.}, 305:165--196, 2017], Komornik et al.~proved a long-conjectured formula for the Hausdorff dimension of the set $\mathcal{U}_q$ of numbers having a unique expansion in the (non-integer) base $q$, and showed that this Hausdorff dimension is continuous in $q$. Unfortunately, their proof contained a gap which appears difficult to fix. This article gives a completely different proof of these results, using a more direct combinatorial approach.
\end{abstract}
\keywords{Univoque set, Hausdorff dimension, topological entropy.}
\maketitle

\section{Introduction}

Fix an integer $M\geq 1$ and a real number $q\in(1,M+1]$, and let $I_q:=[0,M/(q-1)]$. It is well known that every number $x\in I_q$ can be written in the form 
\begin{equation}
x=\sum_{j=1}^\infty \frac{x_j}{q^j}=:\pi_q(x_1 x_2\dots), \qquad x_j\in\{0,1,\dots,M\}\ \forall j.
\label{eq:q-expansion}
\end{equation}
We call such a representation a {\em $q$-expansion} of $x$. Such expansions were introduced by R\'enyi \cite{Renyi} and studied further by Parry \cite{Parry_1960}. They were then largely forgotten for about 30 years until Erd\H{o}s et al. \cite{EHJ,EJK} uncovered their fascinating mathematical structure. Since then, $q$-expansions have been the subject of a large number of research articles, many of which focus on the {\em univoque set}
\[
\u_q:=\{x\in I_q: x\ \mbox{has a unique $q$-expansion of the form \eqref{eq:q-expansion}}\}.
\]
This set was shown to be of Lebesgue measure zero in \cite{EJK}, but its more detailed structure was first exposed in the remarkable paper by Glendinning and Sidorov \cite{GlenSid}. For the case $M=1$, they found that phase transitions occur at two critical values $q_G:=(1+\sqrt{5})/2$ and $q_{KL}\approx1.78723$, as follows: $\u_q$ is (i) the two-point set $\{0,M/(1-q)\}$ for $1<q\leq q_G$; (ii) countably infinite for $q_G\leq q<q_{KL}$; (iii) uncountable but of zero Hausdorff dimension for $q=q_{KL}$; and (iv) of positive Hausdorff dimension for $q_{KL}<q\leq M+1$. The number $q_{KL}$ is called the {\em Komornik-Loreti constant}; see Section \ref{sec:symbolic} below for a precise definition. The above result was generalized to arbitrary $M\geq 1$ by Baker \cite{Baker} and Kong et al.~\cite{KLD}.

Let $\Omega:=\{0,1,\dots,M\}^\N$. The set $\u_q$ is most easily understood by studying the symbolic univoque set
\[
\us_q:=\{(x_i)\in\Omega: \pi_q((x_i))\in \u_q\}=\pi_q^{-1}(\u_q).
\]
For any subset $X\subseteq \Omega$, we define the {\em topological entropy} of $X$ by
\[
h(X):=\lim_{k\to\infty} \frac{\log\#B_k(X)}{k}=\inf_{k\in\N} \frac{\log \#B_k(X)}{k},
\]
where $B_k(X)$ is the set of all subwords of length $k$ which occur in some sequence in $X$, and $\#B$ denotes the cardinality of a set $B$. The above limit always exists, and is equal to the infimum, since it is easily seen that $\log\#B_k(X)$ is subadditive as a function of $k$. When $X$ is a subshift of the full shift $\Omega$, $h(X)$ coincides with the dynamical notion of topological entropy. 

In their paper, Glendinning and Sidorov suggested the formula
\begin{equation}
\dim_H \u_q=\frac{h(\us_q)}{\log q}
\label{eq:Hausdorff-dimension-formula}
\end{equation}
for the Hausdorff dimension of $\u_q$, and stated without proof that $h(\us_q)$ is continuous in $q$. Detailed proofs of these statements were recently given by Komornik et al.~\cite{Komornik-Kong-Li-2017}. Unfortunately, as we explain in Remark \ref{rem:error} below, their proof contains a serious error, which is not easily fixed. Since the publication of \cite{Komornik-Kong-Li-2017}, a number of papers (e.g. \cite{Allaart-2016,Allaart-2017a,Allaart-2017b,Allaart_Baker_Kong_2017}) have used the continuity of $\dim_H \u_q$ in a fundamental way, and it is therefore of crucial importance to have a complete proof of this result on record. Giving such a proof is the principal objective of this paper. We state our main results as follows:

\begin{theorem} \label{thm:main1}
The function $q\mapsto h(\us_q)$ is continuous on $(1,M+1]$.
\end{theorem}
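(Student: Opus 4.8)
The plan is to trap $\us_q$ between two families of subshifts of finite type (SFTs) and to exploit the monotonicity of $q\mapsto\us_q$. Throughout I use the lexicographic description of $\us_q$ recalled in Section~\ref{sec:symbolic}: writing $\overline{y}:=M-y$ and extending the bar coordinatewise to words and sequences, a sequence $(x_i)\in\Omega$ lies in $\us_q$ if and only if for every $n\ge 0$,
\[
x_n<M\ \Longrightarrow\ \si^n((x_i))\prec\al(q),\qquad x_n>0\ \Longrightarrow\ \overline{\si^n((x_i))}\prec\al(q),
\]
where $\al(q)=\al_1(q)\al_2(q)\cdots$ is the quasi-greedy expansion of $1$ in base $q$ and the clauses for $n=0$ hold unconditionally. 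Since $p\mapsto\al(p)$ is strictly increasing, $\us_p\subseteq\us_q$ for $p\le q$, so $B_k(\us_p)\subseteq B_k(\us_q)$ and hence $h(\us_p)\le h(\us_q)$; thus $q\mapsto h(\us_q)$ is non-decreasing, and to prove continuity it suffices to show it has no jumps. That is, for every $q_0\in(1,M+1]$ I must show $\lim_{p\uparrow q_0}h(\us_p)\ge h(\us_{q_0})$ and (when $q_0<M+1$) $\limsup_{p\downarrow q_0}h(\us_p)\le h(\us_{q_0})$, the complementary inequalities being immediate from monotonicity.

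For a finite word $a=a_1\cdots a_n$ over $\{0,\dots,M\}$ set
\[
\mathbf{W}_n(a):=\{(x_i)\in\Omega:\ \forall m,\ x_m<M\Rightarrow x_{m+1}\cdots x_{m+n}\preccurlyeq a\ \text{ and }\ x_m>0\Rightarrow\overline{x_{m+1}\cdots x_{m+n}}\preccurlyeq a\},
\]
\[
\mathbf{Y}_n(a):=\{(x_i)\in\Omega:\ \forall m,\ \overline{a}\prec x_{m+1}\cdots x_{m+n}\prec a\}.
\]
Both are SFTs, their entropies depend only on $a$, and with $\al(q)|_n:=\al_1(q)\cdots\al_n(q)$ one checks directly the sandwich
\[
\mathbf{Y}_n(\al(q)|_n)\ \subseteq\ \us_q\ \subseteq\ \mathbf{W}_n(\al(q)|_n)\qquad(n\in\N):
\]
if every length-$n$ block of $(x_i)$ is $\prec\al(q)|_n$ then $\si^m((x_i))\prec\al(q)$ for all $m$ (and dually with the bar), while $\si^m((x_i))\prec\al(q)$ forces $x_{m+1}\cdots x_{m+n}\preccurlyeq\al(q)|_n$. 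The sequence $(\mathbf{W}_n(\al(q)|_n))_n$ is decreasing with intersection $\mathbf{V}_q$, the ``$\preccurlyeq$-version'' of $\us_q$, and a compactness argument gives $B_k(\mathbf{V}_q)=\bigcap_n B_k(\mathbf{W}_n(\al(q)|_n))$, hence $\lim_n h(\mathbf{W}_n(\al(q)|_n))=h(\mathbf{V}_q)$; the sequence $(\mathbf{Y}_n(\al(q)|_n))_n$ is increasing, so $\lim_n h(\mathbf{Y}_n(\al(q)|_n))$ exists too. The technical core of the proof is the \emph{tightness} of this sandwich, namely
\[
\lim_{n\to\infty}h(\mathbf{Y}_n(\al(q)|_n))\ =\ h(\us_q)\ =\ h(\mathbf{V}_q)\qquad(q\in(1,M+1]).
\]
The right-hand equality (passing between the strict and non-strict inequalities) is proved by a buffering argument: any word of $B_k(\mathbf{V}_q)$, once flanked on both sides by a fixed bounded block, occurs in a sequence lying in $\us_q$, so $\#B_k(\mathbf{V}_q)\le C\,\#B_k(\us_q)$ with $C$ independent of $k$. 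For the left-hand equality (which is \emph{not} automatic, since $h$ may jump up along the increasing union $\bigcup_n\mathbf{Y}_n(\al(q)|_n)$) one must show: given $\ep>0$, there exist $n$ and, for every large $k$, at least $e^{(h(\us_q)-\ep)k}$ words of length $k$ that occur in $\mathbf{Y}_n(\al(q)|_n)$. This is done by flanking a word of $B_k(\us_q)$ with blocks long enough to push \emph{every} shift of the resulting sequence strictly between $\overline{\al(q)}$ and $\al(q)$ on a whole window of length $n=n(k)$ linear in $k$, which is possible because $\us_q$-words admit ``interior'' extensions; the flanking costs only a subexponential factor.

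Granting the tightness lemma, left-continuity is immediate: given $q_0$ and $\ep>0$, choose $n$ with $h(\mathbf{Y}_n(\al(q_0)|_n))>h(\us_{q_0})-\ep$; since $p\mapsto\al(p)$ is left-continuous, there is $\de>0$ with $\al(p)|_n=\al(q_0)|_n$ for $p\in(q_0-\de,q_0)$, and then $\us_p\supseteq\mathbf{Y}_n(\al(q_0)|_n)$ gives $h(\us_p)>h(\us_{q_0})-\ep$. For right-continuity, let $\al^*(q_0):=\lim_{p\downarrow q_0}\al(p)$, which exists in $\Omega$ and satisfies $\al^*(q_0)\succcurlyeq\al(q_0)$; for each $n$ and all $p>q_0$ close enough one has $\al(p)|_n=\al^*(q_0)|_n$, hence $\us_p\subseteq\mathbf{W}_n(\al^*(q_0)|_n)$ and $\limsup_{p\downarrow q_0}h(\us_p)\le\inf_n h(\mathbf{W}_n(\al^*(q_0)|_n))$. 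For all but countably many $q_0$ the map $\al$ is right-continuous, so $\al^*(q_0)=\al(q_0)$ and the last quantity equals $h(\mathbf{V}_{q_0})=h(\us_{q_0})$ by tightness, which settles these $q_0$.

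The remaining case is the main obstacle. The map $\al$ jumps precisely at the countably many $q_0$ for which the greedy expansion of $1$ is finite, say $\beta_1\cdots\beta_m0^\infty$ with $\beta_m\ge1$; there $\al^*(q_0)=\beta_1\cdots\beta_m0^\infty$ while $\al(q_0)=(\beta_1\cdots\beta_{m-1}(\beta_m-1))^\infty$, and one must still prove $\inf_n h(\mathbf{W}_n(\al^*(q_0)|_n))=h(\us_{q_0})$ even though $\al^*(q_0)\ne\al(q_0)$. I would do this by analysing $\mathbf{W}_n(\al^*(q_0)|_n)\setminus\mathbf{V}_{q_0}$ directly: the only extra freedom is that a window may equal $\beta_1\cdots\beta_m0^{\,n-m}$ rather than being capped by $(\beta_1\cdots\beta_{m-1}(\beta_m-1))^\infty|_n$, and each use of this freedom forces a long rigid ``reset'' block of zeros immediately afterwards; one then shows that such sequences are confined to a subshift of entropy $o(1)$ as $n\to\infty$, or, equivalently, builds a block substitution converting admissible words for $\al^*(q_0)$ into admissible words for $\al(q_0)$ with only a subexponential loss, so that $\lim_n h(\mathbf{W}_n(\al^*(q_0)|_n))=\lim_n h(\mathbf{W}_n(\al(q_0)|_n))=h(\us_{q_0})$. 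Controlling the entropy across these discontinuities, by means of the explicit self-referential form of $\al(q_0)$ and $\al^*(q_0)$, is where I expect the genuine difficulty of the proof to lie.
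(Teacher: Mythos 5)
Your high-level framing matches the paper: sandwich $\us_q$ between the increasing SFTs $\mathbf{Y}_n(\al(q)|_n)$ (the paper's $\us_{q,n}$) and the decreasing SFTs $\mathbf{W}_n(\al(q)|_n)$ (essentially the paper's $\vs_{q,n}$), show the entropies of the two families have a common limit, and then feed the left-continuity of $\al$ into this. The paper proves right continuity cheaply (using that $H$ is constant on components of $(1,M+1]\setminus\ub$, so only $q\in\ub$ matters, and there $\al=\beta$), and devotes Section~\ref{s:left continuity} to the equality $\lim_n h(\us_{q,n})=\lim_n h(\vs_{q,n})$. Your ``tightness lemma'' is exactly this statement, and you are right that it is the crux. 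However, your sketch of its proof has a genuine gap.

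The flanking argument cannot work as described. A word $\sw\in B_k(\us_q)$ can contain \emph{internal} occurrences of $\al_1(q)\dots\al_n(q)$ or its reflection (this happens precisely because membership in $\us_q$ only constrains the \emph{tails} $\si^m((x_i))$ lexicographically, not every window of length $n$). Such a word can never lie in $B_k(\mathbf{Y}_n(\al(q)|_n))$, no matter what is appended on either side, so flanking does not convert $B_k(\us_q)$-words into $B_k(\mathbf{Y}_n)$-words. One must actively \emph{rewrite} each offending occurrence in the interior of the word, and then control how many distinct $B_k(\vs_{q,n})$-words collapse to the same rewritten word --- that multiplicity must be bounded by something like $e^{o(1)\cdot k}$ with the $o(1)$ going to $0$ as $n\to\infty$. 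That is precisely the content of the paper's maps $F_{\su,\sv}$ and $f_{n,k}$ (Definitions~\ref{def:general-F} and~\ref{def:fnk}, Lemmas~\ref{lem:24} and~\ref{lem:F-A}), and getting the rewriting to stay inside $\vs_{q,n}$ forces the case analysis depending on the combinatorial structure of $\al(q)$ (whether $l(m)>0$ for infinitely many $m\in\mathcal{N}(q)$, and the periodic-tail Case~B). A secondary logical issue in your sketch: you first fix $n$ (given $\ep$, uniformly in $k$) and then say the window length is ``$n=n(k)$ linear in $k$''; these two choices of $n$ are incompatible.

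Your treatment of right continuity is also more laborious than needed and defers another hard step. The discontinuity points of $\al$ are exactly the $q_0$ with a finite greedy expansion of $1$, and those points lie outside $\overline{\ub}$; the paper sidesteps any entropy estimate at such $q_0$ by invoking that $H$ is locally constant off $\ub$, so right continuity only needs to be verified for $q_0\in\ub$, where $\al(q_0)=\beta(q_0)$ and your $\al^*(q_0)=\al(q_0)$. By contrast, your ``one then shows that such sequences are confined to a subshift of entropy $o(1)$'' at a jump point is a second unproved claim that, if attacked head-on, essentially reproduces the rewriting difficulty described above. Moreover, the paper's left-continuity argument itself only needs to be carried out for $q\in\bb^L$ (because $H$ is constant on each entropy plateau $[p_L,p_R]$, a structural input from \cite{AlcarazBarrera-Baker-Kong-2016}); dispensing with that reduction, as you implicitly do, would make the combinatorics strictly harder, not easier. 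In short: the skeleton is right, but the ``flanking costs only a subexponential factor'' step conceals the entire substance of the theorem, and the idea as stated would not produce a correct proof.
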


\begin{theorem} \label{thm:main2}
For each $q\in(1,M+1]$, the formula \eqref{eq:Hausdorff-dimension-formula} holds.
\end{theorem}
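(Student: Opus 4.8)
The plan is to prove the two inequalities
\[
\dim_H\u_q\le \frac{h(\us_q)}{\log q}\qquad\text{and}\qquad \dim_H\u_q\ge\frac{h(\us_q)}{\log q}
\]
separately, the second being where the real work lies.

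For the upper bound I would use the obvious covering of $\u_q$ by cylinders. Since $\pi_q$ is injective on $\us_q$ and maps every length-$k$ cylinder of $\Omega$ onto an interval of length $M/((q-1)q^k)$, the set $\u_q=\pi_q(\us_q)$ is covered by $\#B_k(\us_q)$ intervals of diameter $M/((q-1)q^k)$. Given $s>h(\us_q)/\log q$, choose $s'$ with $h(\us_q)/\log q<s'<s$; then $\#B_k(\us_q)\le q^{s'k}$ for all large $k$, so the sum of the $s$-th powers of the diameters in this covering is at most $(M/(q-1))^s\,q^{s'k}q^{-sk}=(M/(q-1))^s\,q^{-(s-s')k}\to0$ as $k\to\f$. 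Hence $\dim_H\u_q\le s$ for every such $s$, and the upper bound follows.

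For the lower bound I would build, for each $\ep>0$, a self-similar Cantor subset of $\u_q$ of Hausdorff dimension at least $(h(\us_q)-\ep)/\log q$; we may assume $h(\us_q)>0$, the other case being trivial. Since $h(\us_q)=\lim_k\frac1k\log\#B_k(\us_q)$, pick $n$ large with $\#B_n(\us_q)>e^{n(h(\us_q)-\ep)}$. The heart of the argument is a combinatorial \emph{surgery lemma}: using the lexicographic description of $\us_q$ in terms of the quasi-greedy expansion $\al(q)$, produce a fixed ``separating block'' $\bs$ and a subset $W\subseteq B_n(\us_q)$ with $\#W\ge e^{n(h(\us_q)-2\ep)}$ such that (i) every infinite concatenation $w^{(1)}\bs\, w^{(2)}\bs\cdots$ with all $w^{(j)}\in W$ again belongs to $\us_q$, and (ii) $\bs$ is ``recognizable'', so that two such concatenations whose data blocks first disagree at the $(k{+}1)$-st block have $\pi_q$-images at distance at least $c\,q^{-k(n+|\bs|)}$ for some fixed $c>0$. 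Granting this, the set $C$ of $\pi_q$-images of all such concatenations is a Cantor subset of $\u_q$ carrying the Bernoulli measure $\mu$ under which the successive data blocks are independent and uniform on $W$; properties (i)--(ii) and the mass distribution principle then give $\dim_H C\ge \log\#W/((n+|\bs|)\log q)\ge n(h(\us_q)-2\ep)/((n+|\bs|)\log q)$, and letting $n\to\f$ with $|\bs|$ fixed and then $\ep\to0$ yields $\dim_H\u_q\ge h(\us_q)/\log q$. (An alternative, leaning on Theorem~\ref{thm:main1}: for $p<q$ one has $\al(p)\prec\al(q)$, so the closed lexicographic conditions defining $\overline{\us_p}$ imply the strict ones defining $\us_q$, whence $\pi_q(\overline{\us_p})\subseteq\u_q$; letting $p\uparrow q$ through parameters with $\al(p)$ eventually periodic, so that $\overline{\us_p}$ is sofic with $h(\overline{\us_p})=h(\us_p)\to h(\us_q)$ by Theorem~\ref{thm:main1}, reduces everything to the dimension lower bound for the $\pi_q$-projection of a single sofic subshift.)

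The main obstacle is the surgery lemma, or equivalently the control of the distortion of $\pi_q$ on the subsystem used. This difficulty is genuine: because $q\le M+1$ the map $\pi_q$ need not be expanding, and distinct cylinders can have arbitrarily close --- even identical --- images, so the mere injectivity of $\pi_q$ on $\us_q$ does not by itself produce a usable lower bound. Making this work requires (a) choosing the separating block $\bs$ from a sufficiently deep prefix of $\al(q)$ so that it simultaneously ``resets'' the upper constraint $\si^m(\cdot)\prec\al(q)$ and the lower constraint $\overline{\si^m(\cdot)}\prec\al(q)$, and is synchronizing in a strong enough sense to be read off from the $\pi_q$-image; and (b) a counting estimate showing that the ``unsafe'' words in $B_n(\us_q)$ --- those that come within a small lexicographic margin of violating one of these strict inequalities once followed by $\bs$ --- form a subexponentially small fraction, so that $\#W$ can be kept within a factor $e^{-\ep n}$ of $\#B_n(\us_q)$. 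Carrying out (a) and (b), together with a separate check of the boundary case $q=M+1$, should complete the proof.
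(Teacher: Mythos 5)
Your upper bound is fine (and is essentially the symbolic estimate $\dim_H^{(q)}\leq h/\log q$ used in the paper). The real issue is the lower bound, where you correctly identify the obstacle --- for $q<M+1$ the map $\pi_q$ is far from expanding and distinct cylinders may have overlapping or even identical images --- but then attempt to fight it with a bespoke surgery/Moran construction that you yourself flag as unfinished. What you are missing is that this difficulty is already resolved by a known fact, which the paper records as Lemma~\ref{l27}: although $\pi_q$ badly distorts general cylinders, its \emph{restriction to $\us_q$} is bi-Lipschitz from $(\us_q,\rho_q)$ to $(\u_q,|\cdot|)$ (this goes back to Jordan--Shmerkin--Solomyak and is reproved in \cite{Allaart-2017b}). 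Once you have $\dim_H\u_q=\dim_H^{(q)}\us_q$, there is no need for a separating block, recognizability, or a mass-distribution estimate at the level of real intervals: everything happens in the symbolic space with the metric $\rho_q$, where cylinders are honestly nested and disjoint.

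Your ``alternative'' route is structurally the same as the paper's actual argument --- approximate $q$ from below by bases $q_n$ with $\alpha(q_n)$ periodic and invoke Theorem~\ref{thm:main1} --- but you stop at ``reduces everything to the dimension lower bound for the $\pi_q$-projection of a single sofic subshift,'' which is exactly the non-trivial step. The paper closes it as follows: when $\alpha(q_n)=(a_1\dots a_{m}^-)^\infty$ with $a_1\dots a_{m}$ primitive, $\vs_{q_n}$ is a subshift of finite type, for which $\dim_H^{(q_n)}\vs_{q_n}=h(\vs_{q_n})/\log q_n$ is classical (Kenyon--Peres); then $\dim_H^{(q_n)}\us_{q_n}=\dim_H^{(q_n)}\vs_{q_n}$ since the difference is countable (Lemma~\ref{lem:equal-dimension}), the bi-Lipschitz lemma transfers this to $\dim_H\u_{q_n}$, and the rescaling \eqref{eq:24} plus the inclusion $\us_{q_n}\subseteq\us_q$ plus Theorem~\ref{thm:main1} give the lower bound. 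Note also that this approximation argument is carried out only for $q\in\overline{\ub}$ (Lemma~\ref{lem:admissible-words} guarantees the primitive prefixes there); for $q$ inside a plateau component of $(q_{KL},M+1]\setminus\overline{\ub}$ the paper appeals to a separate constancy result of Kong--Li, a case your sketch does not treat. In short: your proposal circles around the right idea, but the decisive ingredient is Lemma~\ref{l27}, not a new combinatorial surgery lemma, and without it the lower bound is not closed.
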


Our initial approach is the same as in \cite{Komornik-Kong-Li-2017}: we sandwich the set $\us_q$ between two sets $\us_{q,n}$ and $\vs_{q,n}$ and show that $h(\us_{q,n})-h(\vs_{q,n})\to 0$ as $n\to\infty$. But, whereas the authors of \cite{Komornik-Kong-Li-2017} attempted to use the Perron-Frobenius theorem to compare the entropies of $\us_{q,n}$ and $\vs_{q,n}$, we give instead a more direct combinatorial argument by constructing for each $k\in\N$ a map $f_{n,k}$ from $B_k(\vs_{q,n})$ into $B_k(\us_{q,n})$ that is ``not too many"-to-one; see Section \ref{s:left continuity} for the details. We observe that we use some results from \cite{AlcarazBarrera-Baker-Kong-2016}, a paper which supercedes \cite{Komornik-Kong-Li-2017}. However, we emphasize that the continuity of $h(\us_q)$ is not used in \cite{AlcarazBarrera-Baker-Kong-2016}.

Theorem \ref{thm:main2} is a fairly direct consequence of Theorem \ref{thm:main1}, and is proved in Section \ref{sec:proof2}.

\section{Symbolic univoque sets} \label{sec:symbolic}

In this section we will describe the symbolic univoque set $\us_q$ and calculate its Hausdorff dimension. Let $\si$ be the \emph{left shift} on $\Omega$ defined by $\si((c_i))=(c_{i+1})$. Then $(\Omega,\si)$ is a \emph{full shift}. By a \emph{word} $\c$ we mean a finite string of digits $\c=c_1\ldots c_n$ with each digit $c_i\in {\{0,1,\dots,M\}}$. For two words $\c=c_1\ldots c_m$ and $\d=d_1\ldots d_n$ we denote by $\c\d=c_1\ldots c_m d_1\ldots d_n$ their concatenation. For a positive integer $n$ we write $\c^n=\c\cdots\c$ for the $n$-fold concatenation of $\c$ with itself. Furthermore, we write $\c^\f=\c\c\cdots$ for the infinite periodic sequence with period block $\c$. For a word $\c=c_1\ldots c_m$ we set $\c^+:=c_1\ldots c_{m-1}(c_m+1)$ if $c_m<M$, and set $\c^-:=c_1\ldots c_{m-1}(c_m-1)$ if $c_m>0$.
 Furthermore, we define the \emph{reflection} of the word $\c$ by $\overline{\c}:=(M-c_1)(M-c_2)\cdots(M-c_m)$. Clearly, $\c^+, \c^-$ and $\overline{\c}$ are all words with digits from ${\{0,1,\dots,M\}}$. For a sequence $(c_i)\in\Omega$ its reflection is also a sequence in $\Omega$ defined by $\overline{(c_i)}=(M-c_1)(M-c_2)\cdots$.

For a subset $X\subseteq \Omega$, the {\em language} of $X$, denoted $\L(X)$, is the set of all finite words that occur in some sequence in $X$. So, $\L(X)=\bigcup_{k=1}^\f B_k(X)$.

Throughout the paper we will use the lexicographical ordering $\prec, \lle, \succ$ and $\lge$ between sequences and words. More precisely, for two sequences $(c_i), (d_i)\in\Omega$ we say $(c_i)\prec (d_i)$ or $(d_i)\succ (c_i)$ if there exists an integer $n\ge 1$ such that $c_1\ldots c_{n-1}=d_1\ldots d_{n-1}$ and $c_n<d_n$. Furthermore, we write $(c_i)\lle (d_i)$ if $(c_i)\prec (d_i)$ or $(c_i)=(d_i)$. Similarly, for two words $\c$ and $\d$ we say $\c\prec \d$ or $\d\succ\c$ if $\c0^\f\prec \d0^\f$.

Let $q\in(1,M+1]$. The description of $\us_q$ is based on the \emph{quasi-greedy} $q$-expansion of $1$, denoted by $\al(q)=\al_1(q)\al_2(q)\ldots$,  which is the lexicographically largest $q$-expansion of $1$ not ending with $0^\f$ (cf.~\cite{Daroczy_Katai_1993}). The following characterization of $\al(q)$ was given in \cite[Proposition 2.3]{Vries-Komornik-Loreti-2016}.

\begin{lemma} \label{l21} 
The map $q\mapsto \al(q)$ is a strictly increasing bijection from $(1, M+1]$ onto the set of all sequences $(a_i)\in\Omega$ not ending with $0^\f$ and satisfying
\[
a_{n+1}a_{n+2}\ldots \lle a_1a_2\ldots \qquad\textrm{for all }\quad n\ge 0.
\]
Furthermore, the map $q\mapsto \al(q)$ is left-continuous.
\end{lemma}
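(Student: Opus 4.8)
The plan is to establish, in order, four facts: each $\al(q)$ satisfies the displayed inequality and does not end in $0^\f$; conversely, every such sequence arises as some $\al(q)$; the map $q\mapsto\al(q)$ is strictly increasing; and it is left-continuous. I would work throughout from the recursive ``quasi-greedy'' description of $\al(q)$: using $q\le M+1$, one checks that the maximality defining $\al(q)$ is equivalent to the rule that $\al_n(q)$ is the largest digit $b\in\{0,\dots,M\}$ with $\sum_{j=1}^{n-1}\al_j(q)q^{-j}+bq^{-n}<1$. Writing $r_n:=q^n\big(1-\sum_{j\le n}\al_j(q)q^{-j}\big)$, one has $r_0=1$, $r_n=qr_{n-1}-\al_n(q)$, $r_n=\pi_q(\si^n(\al(q)))$, and $r_n>0$ for all $n$; this last point is exactly why $\al(q)$ does not end in $0^\f$, and it also gives $\pi_q(\al(q))=1$. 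A short induction shows $r_n\le1$ for all $n$: if $\al_n(q)<M$ the maximality forces $r_n\le1$, and if $\al_n(q)=M$ then $r_n=qr_{n-1}-M\le q-M\le1$ because $q\le M+1$. Given this, the inequality $\si^n(\al(q))\lle\al(q)$ is immediate: if $k$ were the first coordinate at which $\si^n(\al(q))$ exceeds $\al(q)$, then subtracting partial sums in $\sum_{j\le n+k}\al_j(q)q^{-j}<1$ would give $\sum_{j\le k}\al_{n+j}(q)q^{-j}<r_n\le1$, so that $\al_{n+k}(q)$ is an admissible digit at step $k$ --- contradicting the maximality defining $\al_k(q)$.

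For surjectivity, take $(a_i)$ not ending in $0^\f$ with $\si^n((a_i))\lle(a_i)$ for all $n\ge1$; then $a_1\ge1$, so $t\mapsto\sum a_it^{-i}$ decreases continuously and strictly from $+\f$ to $0$ on $(1,\f)$ and there is a unique $q$ with $\pi_q((a_i))=1$, and $q\le M+1$ because $q>M+1$ would give $1=\pi_q((a_i))\le M/(q-1)<1$. To identify $(a_i)$ with $\al(q)$ I would run the recursion above in base $q$ and check inductively that it reproduces $(a_i)$; the one nontrivial ingredient is that $\pi_q(\si^n((a_i)))\le1$ for every $n$. This is the crux of the lemma, and it is delicate precisely because when $q<M+1$ the lexicographic order does not determine the order of the values of $\pi_q$. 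I would isolate it as a self-contained estimate: \emph{if $\si^m(a)\lle a$ for all $m\ge1$ and $\pi_q(a)=1$, then $\pi_q(\si^m(a))\le1$ for all $m$}. The proof is a fixed-point trick. Set $S:=\sup_m\pi_q(\si^m(a))$; this is finite (bounded by $M/(q-1)$) and at least $1$. For each $m$, either $\si^m(a)=a$, in which case $\pi_q(\si^m(a))=1$, or, with $k$ the first coordinate where $\si^m(a)$ differs from $a$ (necessarily $a_{m+k}<a_k$), one decomposes $\pi_q(\si^m(a))-1$ at that coordinate and bounds it above by $q^{-k}\big(\pi_q(\si^{m+k}(a))-\pi_q(\si^k(a))-1\big)\le q^{-1}(S-1)$. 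In either case $\pi_q(\si^m(a))\le1+q^{-1}(S-1)$, hence $S\le1+q^{-1}(S-1)$, which forces $S\le1$. Uniqueness of $q$ here, and injectivity of $q\mapsto\al(q)$ generally, then follow from $\pi_q(\al(q))\equiv1$ and the strict monotonicity of $t\mapsto\pi_t$ of a fixed sequence.

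Monotonicity and left-continuity I would also read off the recursion. If $\al_j(q_1)=\al_j(q_2)$ for $j<n$ and $q_1<q_2$, then since $t\mapsto\sum_{j<n}\al_j(q_1)t^{-j}+bt^{-n}$ is strictly decreasing (using $\al_1(q_1)\ge1$), every digit $b$ admissible at step $n$ for $q_1$ is admissible for $q_2$, so $\al_n(q_1)\le\al_n(q_2)$; hence $\al(q_1)\lle\al(q_2)$, and strictness follows from injectivity. For left-continuity, fix $q$; monotonicity makes $a^*:=\lim_{p\uparrow q}\al(p)$ exist coordinatewise, and $a^*\lle\al(q)$ because $\lle$ is preserved under coordinatewise limits. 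If $a^*\prec\al(q)$, let $n$ be the first coordinate of disagreement, so $\al_j(p)=a^*_j$ for $j\le n$ once $p$ is close enough to $q$; maximality at step $n$ for such $p$ gives $\sum_{j<n}a^*_jp^{-j}+(a^*_n+1)p^{-n}\ge1$, and letting $p\uparrow q$ gives $\sum_{j<n}\al_j(q)q^{-j}+(a^*_n+1)q^{-n}\ge1$ --- but $\al_n(q)\,(\ge a^*_n+1)$ being admissible at step $n$ for $q$ is the reverse strict inequality, a contradiction. Hence $a^*=\al(q)$. The step I expect to be the main obstacle is the emphasized estimate $\pi_q(\si^m(a))\le1$, i.e., handling the mismatch between lexicographic and numerical order in the overlapping regime $q<M+1$; the remaining arguments are routine manipulations of the quasi-greedy recursion.
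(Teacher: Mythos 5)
The paper does not prove this lemma; it is stated as a quotation of [Proposition~2.3, de~Vries--Komornik--Loreti, \emph{Topology Appl.} 205 (2016)], which in turn traces back to Parry's 1960 lexicographic characterization. Your proof is therefore, by necessity, a different route, and it is correct. It follows the customary outline (quasi-greedy recursion, the bound $r_n\le 1$ forced by $q\le M+1$, lexicographic characterization, bijectivity, monotonicity, left-continuity), but the key surjectivity estimate --- that for a sequence $a$ with $\si^m(a)\lle a$ for all $m\ge1$ and $\pi_q(a)=1$ one has $\pi_q(\si^m(a))\le 1$ for every $m$ --- is handled by a contraction argument that differs from the standard treatments. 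You show that $S:=\sup_m\pi_q(\si^m(a))$ satisfies $S\le 1+q^{-1}(S-1)$, whence $S\le1$; the published proofs establish the same bound by a one-shot comparison of $\si^m(a)$ with $a$ from the first index of disagreement. Your version packages that comparison as a fixed-point inequality and so never needs to conclude anything about an individual $m$ before passing to the supremum; the gain is a shorter, more uniform argument, at the small cost of the prior check that $S<\f$ (trivial here) and the observation $k\ge1$, which yields the uniform contraction factor $q^{-1}$. Two small points to make explicit in a write-up: in monotonicity, the strict decrease of $t\mapsto\sum_{j<n}\al_j(q_1)t^{-j}+bt^{-n}$ needs $\al_1(q_1)\ge1$, which you should derive from the already-established lexicographic inequality for $\al(q_1)$ together with the non-$0^\f$ condition; and in left-continuity, the invocation of non-admissibility of $a^*_n+1$ at stage $n$ requires $a^*_n+1\le M$, which holds since $a^*_n<\al_n(q)\le M$ but deserves a sentence.
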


The following lexicographic characterization of the symbolic univoque set $\us_q$ was essentially established by Parry \cite{Parry_1960} (see also \cite{Komornik-Kong-Li-2017}).

\begin{lemma} \label{l23}
Let $q\in(1,M+1]$. Then $(x_i)\in\us_q$ if and only if
\[\left\{\begin{array}{lll}
x_{n+1}x_{n+2}\ldots \prec \al(q)&\quad\textrm{whenever}& x_n<M,\\
x_{n+1}x_{n+2}\ldots \succ \overline{\al(q)}&\quad\textrm{whenever}& x_n>0.
\end{array}\right.
\]
\end{lemma}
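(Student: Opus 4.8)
The plan is to factor the proof through the two ``metric'' conditions
\[
\text{(A):}\ \ \pi_q(x_{n+1}x_{n+2}\ldots)<1\ \ \text{ whenever }x_n<M,\qquad \text{(B):}\ \ \pi_q(x_{n+1}x_{n+2}\ldots)>\tfrac{M}{q-1}-1\ \ \text{ whenever }x_n>0,
\]
first showing that $(x_i)\in\us_q$ if and only if (A) and (B) both hold, and then translating (A)\,\&\,(B) into the stated lexicographic conditions. The first equivalence is elementary: if (A) fails at some $n$, then $t:=\pi_q(x_{n+1}x_{n+2}\ldots)-1\in[0,M/(q-1)]$ has a $q$-expansion $(t_i)$, and $x_1\cdots x_{n-1}(x_n+1)t_1t_2\cdots$ is a second $q$-expansion of $\pi_q((x_i))$; symmetrically if (B) fails. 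Conversely, if $(z_i)\ne(x_i)$ is another $q$-expansion of $\pi_q((x_i))$, then at the first index $n$ with $z_n\ne x_n$ one has $z_n+\pi_q(\si^n((z_i)))=x_n+\pi_q(\si^n((x_i)))$, so (A) fails if $z_n>x_n$ and (B) fails if $z_n<x_n$. (One may read (A) and (B) as saying that $(x_i)$ is simultaneously the greedy and the lazy $q$-expansion of $\pi_q((x_i))$, though we will not need this.)

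It then remains to prove that (A)\,\&\,(B) is equivalent to $P^+$\,\&\,$P^-$, where $P^+$ is the condition ``$x_{n+1}x_{n+2}\ldots\prec\al(q)$ whenever $x_n<M$'' and $P^-$ is ``$x_{n+1}x_{n+2}\ldots\succ\overline{\al(q)}$ whenever $x_n>0$''. All four of (A), (B), $P^+$, $P^-$ are invariant under the shift $\si$, and the reflection $(x_i)\mapsto\overline{(x_i)}$ — which reverses the lexicographic order, preserves $\us_q$, and satisfies $\pi_q(\overline{(c_i)})=M/(q-1)-\pi_q((c_i))$ — interchanges (A) with (B) and $P^+$ with $P^-$. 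Hence it suffices to establish (a)~$\text{(A)}\,\&\,\text{(B)}\Rightarrow P^+$ and (b)~$P^+\,\&\,P^-\Rightarrow\text{(A)}$, the remaining two implications following by applying (a) and (b) to $\overline{(x_i)}$. Crucially, in both arguments I will use only the trivial bounds $0\le\pi_q((c_i))\le M/(q-1)$, the reflection identity, and the observation (immediate from Lemma~\ref{l21}, since $\al(q)$ does not end in $0^\f$) that $\pi_q(\si^k(\al(q)))>0$ for every $k\ge 0$; in particular I will avoid any appeal to monotonicity of $\pi_q$.

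For (a): fix $n$ with $x_n<M$ and suppose for contradiction that $\si^n((x_i))\lge\al(q)$. Equality would force $\pi_q(\si^n((x_i)))=1$, contradicting (A); so $\si^n((x_i))\succ\al(q)$, and if $k$ is the first index of disagreement then $x_{n+k}>\al_k(q)\ge 0$, hence $x_{n+k}>0$. Matching the first $k-1$ digits with $\al(q)$ and using (A) at $n$ gives $\pi_q(\si^{n+k}((x_i)))<\pi_q(\si^k(\al(q)))-(x_{n+k}-\al_k(q))\le M/(q-1)-1$, while (B) at $n+k$ gives the reverse inequality — a contradiction. For (b): suppose (A) fails, say $x_{n_0}<M$ but $\pi_q(\si^{n_0}((x_i)))\ge 1$; then $P^+$ gives $y^{(0)}:=\si^{n_0}((x_i))\prec\al(q)$. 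Inductively, given $y^{(j)}=(y^{(j)}_i)_i\prec\al(q)$ with $\pi_q(y^{(j)})\ge 1$, let $k_j$ be the first index with $y^{(j)}_{k_j}<\al_{k_j}(q)$; then $y^{(j)}_{k_j}<M$, so $P^+$ applied to the shift $y^{(j)}$ yields $y^{(j+1)}:=\si^{k_j}(y^{(j)})\prec\al(q)$, and matching the first $k_j-1$ digits of $y^{(j)}$ with $\al(q)$ gives
\[
\pi_q(y^{(j+1)})=q^{k_j}\bigl(\pi_q(y^{(j)})-1\bigr)+\bigl(\al_{k_j}(q)-y^{(j)}_{k_j}\bigr)+\pi_q(\si^{k_j}(\al(q)))>q^{k_j}\bigl(\pi_q(y^{(j)})-1\bigr)+1.
\]
In particular $\pi_q(y^{(j+1)})>1$ and $\pi_q(y^{(j+1)})-1>q\bigl(\pi_q(y^{(j)})-1\bigr)$; since $\pi_q(y^{(1)})-1\ge\pi_q(\si^{k_0}(\al(q)))>0$, iterating forces $\pi_q(y^{(j)})\to\f$, contradicting $\pi_q(y^{(j)})\le M/(q-1)$.

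The step I expect to be the genuine obstacle is precisely this translation between (A)/(B) and $P^+/P^-$: because $\pi_q$ is far from order-preserving when $q<M+1$ (a lazy $q$-expansion of a number slightly larger than $1$ can well be lexicographically smaller than $\al(q)$), the two kinds of condition cannot be matched tail by tail, and the device above is to argue by contradiction at the first disagreement with $\al(q)$, invoking nothing beyond the crude bound $\pi_q\le M/(q-1)$ and the positivity of the tails of $\al(q)$; the self-amplifying estimate in part (b), which magnifies any violation of (A) geometrically under shifting, is the crux of the matter.
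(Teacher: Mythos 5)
The paper does not prove Lemma~\ref{l23}: it cites Parry \cite{Parry_1960} and Komornik--Kong--Li \cite{Komornik-Kong-Li-2017}, where the statement is obtained by combining Parry's lexicographic characterization of greedy expansions with its lazy dual and the fact that a $q$-expansion is unique iff it is simultaneously greedy and lazy. Your proof is a correct, self-contained version of exactly this decomposition, but carried out directly. I checked the details. The reduction to the ``metric'' conditions (A) and (B) is standard and sound: failure of (A) or (B) at $n$ manifestly produces a second expansion by borrowing or carrying at position $n$, and conversely any second expansion with first disagreement at $n$ violates (A) or (B) at $n$ using only $0\le\pi_q\le M/(q-1)$. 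The reflection reduction to the two one-sided implications (a) and (b) is legitimate since reflection interchanges (A)$\leftrightarrow$(B) and $P^+\leftrightarrow P^-$. In (a), the displayed identity relating $\pi_q(\si^n((x_i)))-1$ to $\pi_q(\si^{n+k}((x_i)))-\pi_q(\si^k(\al(q)))$ after clearing the common prefix is correct, and combined with $x_{n+k}-\al_k(q)\ge 1$, $\pi_q(\si^k(\al(q)))\le M/(q-1)$ and (B) at $n+k$ it gives the contradiction. In (b), the recursion $\pi_q(y^{(j+1)})=q^{k_j}(\pi_q(y^{(j)})-1)+(\al_{k_j}(q)-y^{(j)}_{k_j})+\pi_q(\si^{k_j}(\al(q)))$ is correct, and since $\al_{k_j}(q)-y^{(j)}_{k_j}\ge 1$ and $\pi_q(\si^{k_j}(\al(q)))>0$ (because $\al(q)$ does not end in $0^\f$), the quantity $\pi_q(y^{(j)})-1$ stays nonnegative, becomes strictly positive after one step, and is then multiplied by at least $q>1$ at each subsequent step, contradicting boundedness. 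What your route buys is that it avoids invoking Parry's theorem (and the accompanying monotonicity lemmas for greedy/lazy expansions) as a black box: everything reduces to the crude bounds on $\pi_q$, the reflection identity, and positivity of the tails of $\al(q)$. The self-amplifying estimate in (b) is the nontrivial ingredient and does the job cleanly. Two small remarks: the phrase ``all four are invariant under the shift'' is slightly imprecise (applying the shift loses the $n=0$ instance), but your arguments never actually rely on that claim --- they simply instantiate the universally quantified conditions at the appropriate indices of the original $(x_i)$, which is sound; and in (a) you never need $\pi_q(\si^k(\al(q)))>0$, only the trivial upper bound, so that observation is used solely in (b), as you indicate.
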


By Lemmas \ref{l21} and \ref{l23} it follows that the set-valued map $q\mapsto \us_q$ is increasing, i.e., $\us_p\subseteq\us_q$ when $p<q$.

Next, we recall from \cite{Komornik_Loreti_2002} the definition of the Komornik-Loreti constant $q_{KL}=q_{KL}(M)$. 
Let $(\tau_i)_{i=0}^\f=0110100110010110\ldots$ be the classical Thue-Morse sequence (cf.~\cite{Allouche_Shallit_1999}). Then $q_{KL}$ is given implicitly by
\begin{equation*} 
\al(q_{KL})=\la_1\la_2\ldots,
\end{equation*}
where for each $i\ge 1$,
\begin{equation*} 
\la_i=\la_i(M):=\begin{cases}
k+\tau_i-\tau_{i-1} & \qquad\textrm{if $M=2k$},\\
k+\tau_i & \qquad\textrm{if $M=2k+1$}.
\end{cases}
\end{equation*}
For example, $q_{KL}(1)\approx 1.78723$, $q_{KL}(2)\approx 2.53595$, $q_{KL}(3)\approx 2.91002$, etc.

We shall also need the set
\[
\ub:=\{q\in(1,M+1]: 1\in \u_q\}.
\]
This set is of Lebesgue measure zero, and $\min \ub=q_{KL}$ (see \cite{Komornik_Loreti_2002}). The following characterizations of $\ub$ and its topological closure $\overline{\ub}$ were established in \cite{Komornik_Loreti_2007} (see also \cite{Vries-Komornik-Loreti-2016}).

\begin{lemma}
\label{l34}
\mbox{}

\begin{enumerate}[{\rm(i)}]
\item $q\in\ub$ if and only if 
$
\overline{\al(q)}\prec\si^n(\al(q))\prec \al(q)$ {for all} $n\ge 1.$

\item $q\in\overline{\ub}$ if and only if 
 $
\overline{\al(q)}\prec\si^n(\al(q))\lle \al(q)$ {for all} $n\ge 1.$
\end{enumerate}
\end{lemma}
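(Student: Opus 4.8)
The plan is to read off part (i) directly from the lexicographic descriptions in Lemmas \ref{l21} and \ref{l23}, and then to deduce part (ii) from part (i) by a limiting argument.

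\emph{Part (i).} Since $\pi_q(\al(q))=1$, the sequence $\al(q)$ is itself a $q$-expansion of $1$; hence if $1$ has a unique expansion then that expansion must be $\al(q)$, while conversely, if $\al(q)\in\us_q$, then $\al(q)$ is a unique expansion of $1$. Thus $q\in\ub$ if and only if $(\al_i)\in\us_q$. Applying Lemma \ref{l23} to $(x_i)=(\al_i)$, this holds if and only if, for every $n\ge 1$,
\[
\al_n<M\ \Longrightarrow\ \si^n(\al(q))\prec\al(q)\qquad\text{and}\qquad \al_n>0\ \Longrightarrow\ \si^n(\al(q))\succ\overline{\al(q)}.
\]
It remains to show that these conditional inequalities are equivalent to the unconditional chain $\overline{\al(q)}\prec\si^n(\al(q))\prec\al(q)$ for all $n\ge 1$. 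The implication ``$\Leftarrow$'' is immediate, so fix $n\ge 1$ and assume the conditional inequalities. By Lemma \ref{l21} we already have $\si^n(\al(q))\lle\al(q)$. If $\al_n<M$, the first implication gives the strict inequality; if $\al_n=M$, then an equality $\si^n(\al(q))=\al(q)$ would make $\al(q)$ periodic with period block ending in $M$, and comparing $\si^{n-1}(\al(q))\lle\al(q)$ digit by digit then forces $\al(q)=M^\infty$ --- so, apart from the trivial case $q=M+1$, the upper inequality is strict. For the lower bound: if $\al_n>0$ we are done by the second implication; if $\al_n=0$, let $m$ be the largest index with $1\le m<n$ and $\al_m>0$ (it exists since $\al_1\ge 1$), so $\al_{m+1}\cdots\al_n=0^{n-m}$ and $\si^m(\al(q))=0^{n-m}\si^n(\al(q))$. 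Feeding this into $\si^m(\al(q))\succ\overline{\al(q)}$ first forces $\al_1=\cdots=\al_{n-m}=M$, hence $\overline{\al(q)}=0^{n-m}\si^{n-m}(\overline{\al(q)})$, and then yields $\si^n(\al(q))\succ\si^{n-m}(\overline{\al(q)})\lge\overline{\al(q)}$, the last step again by Lemma \ref{l21}. This proves (i).

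\emph{Part (ii).} Since $\si^n(\al(q))\lle\al(q)$ holds automatically (Lemma \ref{l21}), part (ii) is equivalent to: $q\in\overline{\ub}$ if and only if $\overline{\al(q)}\prec\si^n(\al(q))$ for every $n\ge 1$. For ``$\Rightarrow$'', pick $q_k\in\ub$ with $q_k\to q$; since $q\mapsto\al(q)$ is increasing and left-continuous (Lemma \ref{l21}), one passes to the limit in the inequalities of (i) to get $\overline{\al(q)}\lle\si^n(\al(q))\lle\al(q)$, and the equality $\overline{\al(q)}=\si^n(\al(q))$ is ruled out by a short separate argument (it would force $\al(q)$ into the reflection-periodic form $(\mathbf{w}\,\overline{\mathbf{w}})^\infty$, and such bases can be shown not to lie in $\overline{\ub}$). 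For ``$\Leftarrow$'', if every inequality $\si^n(\al(q))\lle\al(q)$ is in fact strict, then $q\in\ub\subseteq\overline{\ub}$ by (i); the remaining case is $\al(q)=\mathbf{w}^\infty$ with $\mathbf{w}$ of minimal period, and here one must produce univoque bases $q_j\uparrow q$. These arise from an explicit substitution construction generalizing the definition of $q_{KL}$: one iterates a reflect-and-increment (``doubling'') operation starting from $\mathbf{w}$ to build longer and longer admissible words, lets $q_j$ be the associated bases, and checks via Lemmas \ref{l21} and part (i) that $q_j\in\ub$ and $q_j\to q$.

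The main obstacle is precisely this last construction: choosing the right doubling substitution and verifying that the words it produces satisfy both the admissibility condition of Lemma \ref{l21} and the strict two-sided condition of part (i) is a delicate combinatorial computation, and this is exactly where the Thue--Morse structure underlying $q_{KL}$ reappears. By contrast, part (i) and the ``$\Rightarrow$'' direction of (ii) are a fairly mechanical unpacking of Lemmas \ref{l21} and \ref{l23}, the only sensitive steps being the two boundary cases $\al_n\in\{0,M\}$ treated above.
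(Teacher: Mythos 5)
The paper does not actually prove this lemma: it is quoted as established in \cite{Komornik_Loreti_2007} (see also \cite{Vries-Komornik-Loreti-2016}), so there is no internal argument to compare your proposal against. Judged on its own terms, your part (i) is sound: the reduction $q\in\ub\iff\al(q)\in\us_q$ (via $\pi_q(\al(q))=1$ and $\us_q=\pi_q^{-1}(\u_q)$), the translation through Lemma \ref{l23}, and the two boundary cases $\al_n=M$ and $\al_n=0$ are all handled correctly, modulo the implicit exclusion of $q=M+1$, which you yourself flagged.

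Part (ii), however, is an outline with genuine gaps, and you candidly say as much. In the ``$\Rightarrow$'' direction, passing to the limit along $q_k\in\ub$ with $q_k\to q$ is only safe when $q_k\nearrow q$, since $q\mapsto\al(q)$ is left- but not right-continuous (Lemma \ref{l21}); to cover the general case you would first need to show that every $q\in\overline{\ub}\setminus\ub$ is a \emph{left} accumulation point of $\ub$, which is not addressed. The exclusion of the equality $\overline{\al(q)}=\si^n(\al(q))$ is deferred to ``a short separate argument'' that is never supplied, even though it is the real content of that direction. In the ``$\Leftarrow$'' direction, when $\al(q)=\mathbf{w}^\infty$ you must actually construct bases $q_j\in\ub$ with $q_j\to q$; the ``reflect-and-increment'' substitution you allude to is not specified, and verifying at each stage both the admissibility condition of Lemma \ref{l21} and the strict two-sided condition of part (i) is precisely where all the combinatorial work (the Thue--Morse mechanism) lies. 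This is, in substance, the argument of \cite{Komornik_Loreti_2007}, and closely related to Lemma \ref{lem:admissible-words}, which the present paper already imports from that source; so reconstructing it from scratch here would either duplicate a substantial external argument or risk circularity with the paper's own use of Lemma \ref{lem:admissible-words}. In short: part (i) works; part (ii) identifies the right obstacles but does not overcome them.
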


The following definition was taken from \cite[Definition 3.10]{AlcarazBarrera-Baker-Kong-2016}.

\begin{definition} \label{def:admissible}
Say a word $a_1\dots a_m$ in $\L(\Omega)$ is {\em primitive} if
\[
\overline{a_1\ldots a_{m-i}} \prec a_{i+1}\ldots a_m \lle a_1\ldots a_{m-i}\qquad\textrm{for all}\quad {0}\le i<m.
\]
\end{definition}

For a proof of the next Lemma, see \cite[Lemma 4.1]{Komornik_Loreti_2007}.

\begin{lemma} \label{lem:admissible-words}
Let $q\in\overline{\ub}$. 
Then there are infinitely many positive integers $n$ such that $\alpha_1(q)\dots\alpha_n(q)$ is primitive.
\end{lemma}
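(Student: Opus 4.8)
The plan is to exploit the self-similar/recursive structure of $\al(q)$ for $q \in \overline{\ub}$, as recorded in Lemma \ref{l34}(ii). Write $(a_i) := \al(q)$, so that $\overline{(a_i)} \prec \si^n((a_i)) \lle (a_i)$ for every $n \ge 1$. I would first observe that the desired conclusion is equivalent to the assertion that there are infinitely many $n$ for which the strict inequality $\si^n((a_i)) \succ \overline{a_1 \dots a_{n-i}}\,a_{n-i+1} a_{n-i+2}\dots$ and the non-strict inequality on the other side both ``localise'': concretely, $a_1 \dots a_n$ is primitive iff for each $0 \le i < n$ one has $\overline{a_1 \dots a_{n-i}} \prec a_{i+1}\dots a_n \lle a_1 \dots a_{n-i}$, and the right inequality is automatic from Lemma \ref{l21} (it says $a_{i+1}\dots a_n a_{n+1}\dots \lle a_1 a_2 \dots$, which forces $a_{i+1}\dots a_n \lle a_1 \dots a_{n-i}$), so the only real content is the left (reflection) inequality for all $i$ simultaneously.

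Next I would dispose of the two extreme regimes. If $q = q_{KL}$, the sequence $\al(q_{KL}) = (\la_i)$ is built from the Thue--Morse sequence, which is well known to have infinitely many prefixes that are ``Thue--Morse blocks'' $\lambda_1 \dots \lambda_{2^k}$; a direct check (using the standard substitutive identities $\tau_{2^k + j} = 1 - \tau_j$ for $0 \le j < 2^k$) shows each such block is primitive, giving the claim in that case. For the general $q \in \overline{\ub}$, I would choose the integers $n$ to be exactly those at which a ``new largest shift is attained'', i.e., the positions $n$ such that $a_{n+1} a_{n+2} \dots \succ a_{m+1} a_{m+2}\dots$ fails to be strict only finitely often — more precisely, I would use the admissibility characterisation of $\overline{\ub}$ together with Lemma \ref{l34}(ii) to locate $n$ with $\si^n((a_i)) = (a_i)$-close-to-maximal in a way that propagates the reflection inequality down to all $i < n$. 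The cleanest route is induction on $i$: knowing $\overline{(a_i)} \prec \si^{n}((a_i))$ for the chosen $n$, one peels off prefixes and uses the defining inequality $\overline{(a_i)} \prec \si^j((a_i)) \lle (a_i)$ at the shifted index to get $\overline{a_1\dots a_{n-i}} \prec a_{i+1}\dots a_n$, provided $n$ was chosen so that no ``tie'' $\overline{(a_i)} = $ (a tail of a shift) occurs within the first $n$ coordinates.

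The main obstacle I anticipate is handling the non-strict inequality $\si^n((a_i)) \lle (a_i)$ in Lemma \ref{l34}(ii): when equality-prefixes occur, the naive choice of $n$ can produce a block that fails primitivity on the right side at some intermediate $i$, so one must argue that infinitely many $n$ avoid all such bad configurations. I would handle this by a pigeonhole/recurrence argument: since $(a_i)$ does not end in $0^\infty$, the word $\overline{a_1 a_2}$ (which begins with a digit $\ge 1$ unless $a_1 = M$, and the $a_1 = M$ case is the trivial $q = M+1$ handled separately) must reappear infinitely often as a factor with a controlled ``lead'', and each such reappearance, combined with Lemma \ref{l34}(ii), forces a primitive prefix ending just before it. Making this last step precise — identifying the right combinatorial event whose infinitely-many occurrences is guaranteed — is where the bulk of the work lies; I would lean on the argument of \cite[Lemma 4.1]{Komornik_Loreti_2007} cited in the excerpt, which is exactly this statement, essentially reproducing its choice of the sequence of good indices $n$ and verifying primitivity prefix by prefix.
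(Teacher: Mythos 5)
The paper gives no proof of this lemma---it simply cites \cite[Lemma 4.1]{Komornik_Loreti_2007}---and your proposal, after some preliminary reductions, also ends by deferring to that same reference, conceding that ``the bulk of the work lies'' there. So in one reading you have made the same move as the authors and there is nothing to compare. But read as an attempted proof, the sketch has a genuine gap. Your preliminary reduction is correct and worth keeping: by Lemma \ref{l21}, $a_{i+1}a_{i+2}\dots\lle a_1a_2\dots$, and truncation preserves $\lle$, so the right-hand inequalities $a_{i+1}\dots a_n\lle a_1\dots a_{n-i}$ of Definition \ref{def:admissible} hold automatically for every $n$ and every $0\le i<n$; the entire content of primitivity of a prefix is the strict reflection inequality $\overline{a_1\dots a_{n-i}}\prec a_{i+1}\dots a_n$.

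That part, however, you do not prove. The proposed pigeonhole mechanism---that the two-letter word $\overline{a_1a_2}$ ``must reappear infinitely often as a factor with a controlled lead'' and that each such occurrence ``forces a primitive prefix ending just before it''---is neither established nor correct as stated. Nothing in the characterisation $\overline{\al(q)}\prec\si^n(\al(q))\lle\al(q)$ forces the word $\overline{a_1a_2}$ to occur as a factor of $\al(q)$ at all; and even if it recurred, the presence of one favourable two-letter block near the end of a prefix gives no control over the reflection inequality at the intermediate shifts $0<i<n-2$, which is precisely where the difficulty lives. To produce the infinitely many good indices $n$ you would actually have to carry out the argument of \cite[Lemma 4.1]{Komornik_Loreti_2007} rather than announce that you would; as written, the proposal identifies the right target but does not hit it.
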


Note that $\us_q$ is in general not a subshift of $\Omega$. Following \cite{DeVries_Komornik_2008} and \cite{Komornik-Kong-Li-2017}
we introduce the set
\begin{equation*}
\vs_q:=\set{(x_i)\in\Omega: \overline{\al(q)}\lle x_{n+1}x_{n+2}\ldots \lle \al(q)\textrm{ for all }n\ge 0}.
\end{equation*}
Then $\vs_q$ is a subshift of $\Omega$.
Comparison of this definition with the characterization of $\us_q$ in Lemma \ref{l23} suggests that $\us_q$ and $\vs_q$ should have the same entropy. This is indeed the case:

\begin{proposition} \label{prop:same-entropy}
For all $q\in(q_{KL},M+1)$, $h(\us_q)=h(\vs_q)$.
\end{proposition}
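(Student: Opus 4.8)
The plan is to show the two entropies coincide by exhibiting, for each $k$, an injection (or a map with bounded-to-one fibers) between the length-$k$ languages, so that $\#B_k(\vs_q)$ and $\#B_k(\us_q)$ differ by at most a subexponential factor. Since $\us_q\subseteq\vs_q$ by comparing Lemma~\ref{l23} with the definition of $\vs_q$, we automatically have $h(\us_q)\le h(\vs_q)$, and the real content is the reverse inequality. First I would unwind the difference between the two sets: a word $x_1\dots x_k$ lies in $B_k(\vs_q)$ but fails to extend into $\us_q$ precisely when some tail $x_{n+1}\dots x_k$ equals a prefix of $\al(q)$ while $x_n<M$ (or the reflected condition holds at a coordinate with $x_n>0$). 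The idea is that such ``bad'' blocks are rigid — once a tail agrees with $\al(q)$, the admissibility constraints force a lot of structure — so they cannot be too numerous.

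The key technical input I would invoke is Lemma~\ref{lem:admissible-words}: for $q\in\overline{\ub}$ (which contains $(q_{KL},M+1)$ in the relevant sense), there are arbitrarily long primitive prefixes $\al_1(q)\dots\al_N(q)$ of $\al(q)$. Fix such an $N$ with $N$ large. Then I would define a map $g_N: B_k(\vs_q)\to B_k(\us_q)$ as follows: given $w\in B_k(\vs_q)$, locate the (disjoint) maximal sub-blocks of $w$ that coincide with a prefix of $\al(q)$ of length $\ge N$ and sit at a position violating the univoque condition; replace each such offending block by a ``safe'' surrogate block of the same length that does satisfy the Parry conditions — for instance, truncate the forbidden prefix just before length $N$ and splice in a fixed admissible filler, using primitivity to guarantee the new word still lies in the language of $\us_q$. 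One must check that $g_N$ is well defined (the surgeries don't interfere) and that the number of preimages of any given word is bounded by the number of ways to have chosen the positions and contents of the excised blocks, which is at most polynomial in $k$ for fixed $N$ — say $(C\cdot k)^{k/N}$ or similar. This yields
\[
\#B_k(\vs_q)\le \#B_k(\us_q)\cdot (Ck)^{k/N},
\]
so $h(\vs_q)\le h(\us_q)+\frac{\log(Ck)}{N}\to h(\us_q)$ as $N\to\infty$ (taking $k\to\infty$ first). Combined with the trivial inequality this gives $h(\us_q)=h(\vs_q)$.

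I would carry out the steps in this order: (1) record $\us_q\subseteq\vs_q$ and hence $h(\us_q)\le h(\vs_q)$; (2) fix $q\in(q_{KL},M+1)$, note $q\in\overline{\ub}$, and select a long primitive prefix via Lemma~\ref{lem:admissible-words}; (3) analyze the ``defects'' of a word in $B_k(\vs_q)\setminus$(extendable-to-$\us_q$), showing each defect is a long run matching a prefix of $\al(q)$ or $\overline{\al(q)}$ at a critical coordinate, and that two defects are separated by at least $N$ symbols (or can be made so by choosing maximal runs); (4) define the correction map $g_N$ by local surgery and verify, using primitivity and Lemma~\ref{l23}, that its image lands in $B_k(\us_q)$; (5) bound the fiber size of $g_N$ by a polynomial in $k$; (6) take $k\to\infty$ and then $N\to\infty$ to conclude.

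The main obstacle I anticipate is step~(4): defining the surgery so that the repaired word genuinely satisfies both Parry inequalities of Lemma~\ref{l23} at \emph{every} coordinate, not just at the patched spots — splicing can create new violations at the seams between the surrogate filler and the surrounding symbols. This is exactly where primitivity of the prefix must be used carefully, since it is the combinatorial property ensuring that a prefix of $\al(q)$ can be safely cut and continued with an admissible tail without the reflected lower bound being breached. A secondary subtlety is handling the boundary cases near the two ends of the word $w$ and making sure the fiber-counting is honest when defects abut the endpoints. I expect the restriction $q>q_{KL}$ (equivalently $q\in\overline{\ub}$ with $h(\us_q)>0$) to enter precisely through the availability of these primitive prefixes.
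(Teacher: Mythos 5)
Your proposal has two genuine gaps that would make the argument fail as written.

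First, the ``trivial'' inequality $h(\us_q)\le h(\vs_q)$ is not trivial, because $\us_q\not\subseteq\vs_q$. Indeed $M^\infty\in\us_q$ for every $q$ (the conditions in Lemma~\ref{l23} are vacuous when every digit is $M$), but $M^\infty\in\vs_q$ only if $\alpha(q)=M^\infty$, i.e.\ $q=M+1$; more generally, any sequence in $\us_q$ beginning with a long block $M^m$ or $0^m$ can violate the $\vs_q$-condition at $n=0$. The paper handles this direction by observing that $\us_q$ is a countable union of affine copies of $\vs_q$ (display \eqref{eq:26}), giving the polynomial bound $\#B_k(\us_q)\le (2k+1)M\,\#B_k(\vs_q)$. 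You would need this (or something equivalent) rather than a containment.

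Second, your step~(2) asserts $q\in(q_{KL},M+1)\Rightarrow q\in\overline{\ub}$, which is false: $\overline{\ub}$ is a Lebesgue-null Cantor-type set whose complement in $(q_{KL},M+1)$ is a countable union of nondegenerate open intervals. So Lemma~\ref{lem:admissible-words} (arbitrarily long primitive prefixes) is simply unavailable for the many $q$ outside $\overline{\ub}$, and your entire surgery mechanism has no starting point there. The paper splits into three cases via Lemma~\ref{l34}: for $q\in\overline{\ub}$ it proves the clean inclusion $B_k(\vs_q)\subseteq B_k(\us_q)$ by a \emph{single} tail replacement using a long primitive prefix (no iterated surgery or fiber counting is needed); for $\sigma^n(\alpha(q))\prec\overline{\alpha(q)}$ it has $\vs_q\subseteq\us_q$ outright; and for $\sigma^n(\alpha(q))=\overline{\alpha(q)}$ it shows every word in $\L(\vs_q)$ is a word in $\L(\us_q)$ plus a forced suffix, again a polynomial bound. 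You would need to supply the latter two cases. A smaller issue: your claimed fiber bound $(Ck)^{k/N}$ contributes $\log(Ck)/N$ per symbol, which diverges as $k\to\infty$ for fixed $N$; to make a bounded-to-one argument of this type work the bound must be of the form $(CN)^{k/N}$, with the base depending on $N$ but not $k$.
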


\begin{proof}
We first show that $h(\us_q)\leq h(\vs_q)$.
By Lemma \ref{l23} it follows that for each $q\in(1, M+1]$ the set $\us_q$ is contained in a countable union of affine copies of $\vs_q$ (see also \cite[Lemma 3.2]{Kalle-Kong-Li-Lv-2016}), i.e., there exists a sequence of affine maps $\set{g_i}_{i=1}^\f$ on $\Omega$ of the form
\[
x_1x_2\ldots\mapsto a x_1x_2\ldots,\qquad x_1x_2\ldots \mapsto M^m b x_1x_2\ldots \qquad\mbox{or}\qquad x_1x_2\ldots \mapsto 0^m c x_1x_2\ldots,
\]
where $a\in\set{1,2,\ldots, M-1}$, $b\in\set{0,1,\ldots, M-1}$, $c\in\set{1,2,\ldots, M}$ and $m=1,2,\ldots$, such that
\begin{equation}
\us_q\subseteq\bigcup_{i=1}^\f g_i(\vs_q).
\label{eq:26}
\end{equation}
Hence any word in $\L(\us_q)$ of length $k$ is either itself in $\L(\vs_q)$, or else has a prefix $a$, $M^m b$ or $0^m c$ followed by a word in $\L(\vs_q)$, with $a,b$ and $c$ as above. Thus, 
\begin{align*}
\#B_k(\us_q) &\leq \#B_k(\vs_q)+(M-1)\#B_{k-1}(\vs_q)+2M\sum_{m=0}^{k-1}\#B_{k-m-1}(\vs_q)\\
&\leq (2k+1)M\#B_k(\vs_q)
\end{align*}
for all $k$, and hence $h(\us_q)\leq h(\vs_q)$.

For the reverse inequality, note that $\vs_q\backslash \us_q$ contains only sequences ending in $\alpha(q)$ or $\overline{\alpha(q)}$. Hence $\vs_q\backslash \us_q$ is countable. This does not immediately imply that $h(\vs_q)\leq h(\us_q)$, since the topological entropy of a countable set can be positive. To verify the inequality rigorously, it suffices in view of Lemma \ref{l34} to consider the following three cases:

\medskip
{\em Case 1:} $q\in\overline{\ub}$. We show that in this case,
\begin{equation}
B_k(\vs_q)\subseteq B_k(\us_q) \qquad\mbox{for all $k\in\N$}.
\label{eq:word-inclusion}
\end{equation}
Let $x_1\dots x_k\in B_k(\vs_q)$. Then there is a sequence $(y_i)\in \vs_q$ and an index $j\in\N$ such that $y_{j+1}\dots y_{j+k}=x_1\dots x_k$. Since $(y_i)\in \vs_q$, we have
\[
\overline{\alpha(q)}\preceq y_{i+1}y_{i+2}\dots \preceq \alpha(q) \qquad\mbox{for all $i\geq 0$}.
\]
Suppose without loss of generality that $y_{i+1}y_{i+2}\dots=\alpha(q)$ for some $i$. Write $\alpha(q)=\alpha_1\alpha_2\dots$.
Since $q\in \overline{\ub}$, there is by Lemma \ref{lem:admissible-words} an index $n>\max\{i,j+k\}$ such that $\alpha_1\dots \alpha_n$ is primitive. Consider the sequence
\begin{align*}
z_1 z_2\dots &=y_1\dots y_j x_1\dots x_k y_{j+k+1}\dots y_{n+i}^-(\alpha_1\dots\alpha_n^-)^\infty\\
&=y_1\dots y_i (\alpha_1\dots\alpha_n^-)^\infty.
\end{align*}
Then $z_1 z_2\dots \in \us_q$. Hence $x_1\dots x_k\in B_k(\us_q)$, proving \eqref{eq:word-inclusion}.

\medskip
{\em Case 2:} $\sigma^n(\alpha(q))\prec\overline{\alpha(q)}$ for some $n\geq 1$. Then it is not possible for a sequence in $\vs_q$ to end in $\alpha(q)$ or $\overline{\alpha(q)}$ in view of the definition of $\vs_q$. Hence, $\vs_q\subseteq\us_q$.

\medskip
{\em Case 3:} $\sigma^n(\alpha(q))=\overline{\alpha(q)}$ for some $n\geq 1$. This means that, with $v:=\alpha_1\dots\alpha_n$, $\alpha(q)=(v\overline{v})^\infty$. So, if $(x_i)\in \vs_q$ and $x_{j+1}\dots x_{j+n}=v$, it must be the case that $\sigma^j((x_i))=(v\overline{v})^\infty$; and likewise, if 
$x_{j+1}\dots x_{j+n}=\overline{v}$ then $\sigma^j((x_i))=(\overline{v}v)^\infty$. Thus, any word in $\L(\vs_q)$ consists of a word in $\L(\us_q)$ followed by $v$ or $\overline{v}$, followed in turn by a forced suffix. As a result,
\[
\#B_k(\vs_q)\leq 2\sum_{j=0}^k \#B_j(\us_q)\leq 2(k+1)\#B_k(\us_q)
\]
for all $k\in\N$, and therefore, $h(\vs_q)\leq h(\us_q)$.
\end{proof}

As a final preparation for the proof of Theorem \ref{thm:main1}, we define for each $n\in\N$ the sets
\[
\us_{q,n}:=\{(x_i)\in\Omega: \overline{a_1\dots a_n}\prec x_{j+1}\dots x_{j+n}\prec a_1\dots a_n\ \mbox{for all $j\geq 0$}\}
\]
and
\[
\vs_{q,n}:=\{(x_i)\in\Omega: \overline{a_1\dots a_n}\preceq x_{j+1}\dots x_{j+n}\preceq a_1\dots a_n\ \mbox{for all $j\geq 0$}\},
\]
where we write $\alpha(q)=a_1 a_2\dots$.
Then $(\us_{q,n}, \si)$ and $(\vs_{q,n}, \si)$ are both subshifts of finite type for any $n\ge 1$. 
Observe from \cite[Lemma 2.7 ]{Komornik-Kong-Li-2017} that 
\begin{equation*} 
\us_{q,n}\subseteq \vs_q\subseteq\vs_{q,n}\quad\textrm{for all }n\ge 1.
\end{equation*}
Furthermore, the set sequence  $(\us_{q,n})$ is nondecreasing and the set sequence $(\vs_{q,n})$ is nonincreasing.  

\begin{remark}[The error in the proof of Komornik, Kong and Li] \label{rem:error}
The authors of \cite{Komornik-Kong-Li-2017} applied the Perron-Frobenius theorem to the edge graph representation $G(n)$ of $\us_{q,n}$ to obtain constants $c_1$ and $c_2$ such that
\[
c_1\lambda_n^k\leq \#B_k(\us_{q,n})\leq c_2 k^s \lambda_n^k,
\]
where $\lambda_n$ is the spectral radius, and $s$ the number of strongly connected components, of $G(n)$. However, later in their proof they treat $c_1$ and $c_2$ as absolute constants, whereas in fact they depend on $n$. The method of proof in \cite{Komornik-Kong-Li-2017} could be saved by finding good bounds on the growth rate of $c_2(n)$, but this turns out to be very difficult to do. Despite our best efforts, we have not been able to accomplish this; hence our resort to the combinatorial method of Section \ref{s:left continuity} below.
\end{remark}

Let $H:(1,M+1]\to [0,\infty)$ be the map
\[
H(q):=h(\us_q).
\]

The right continuity of $H$ is easy to prove: 

\begin{proposition} \label{prop:right-continuity}
The function $H$ is right continuous on $(1,M+1]$.
\end{proposition}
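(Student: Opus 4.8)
The plan is to exploit that $H$ is nondecreasing (because $\us_p\subseteq\us_q$ for $p<q$), so that at each $q_0\in(1,M+1)$ the right limit $H(q_0^+):=\lim_{q\downarrow q_0}H(q)=\inf_{q>q_0}H(q)$ exists and dominates $H(q_0)$; everything then reduces to proving $H(q_0^+)\le H(q_0)$ (at $q_0=M+1$ there is nothing to prove). The cases $q_0\le q_{KL}$ I would dispatch at once using the classical fact that $H\equiv0$ on $(1,q_{KL}]$, and then assume $q_0\in(q_{KL},M+1)$, where $H(q_0)=h(\vs_{q_0})$ by Proposition \ref{prop:same-entropy}.

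The core of the argument is a sandwiching. Set $\gamma:=\lim_{q\downarrow q_0}\al(q)$, the lexicographic infimum of $\{\al(q):q>q_0\}$, which exists since $q\mapsto\al(q)$ is increasing; it satisfies $\al(q_0)\lle\gamma$ and $\si^m(\gamma)\lle\gamma$ for all $m$ (the order $\lle$ being preserved under product-topology limits). For fixed $n$, the word $\al_1(q)\cdots\al_n(q)$ is nondecreasing in $q$ and takes only finitely many values, hence is eventually equal to $\gamma_1\cdots\gamma_n$ as $q\downarrow q_0$; so there is $\de_n>0$ with $\vs_{q,n}=\mathbf{W}_n$ for all $q\in(q_0,q_0+\de_n)$, where $\mathbf{W}_n:=\{(x_i)\in\Omega:\overline{\gamma_1\cdots\gamma_n}\lle x_{j+1}\cdots x_{j+n}\lle\gamma_1\cdots\gamma_n\ \forall j\ge0\}$ is a fixed subshift of finite type. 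Then $\us_q\subseteq\vs_q\subseteq\vs_{q,n}$ together with Proposition \ref{prop:same-entropy} give $H(q)=h(\vs_q)\le h(\mathbf{W}_n)$ for such $q$, so $H(q_0^+)\le h(\mathbf{W}_n)$ for every $n$. Since $\mathbf{W}_n\downarrow\mathbf{W}_\f:=\{(x_i)\in\Omega:\overline\gamma\lle\si^j((x_i))\lle\gamma\ \forall j\ge0\}$, a standard compactness argument shows $B_k(\mathbf{W}_\f)=\bigcap_n B_k(\mathbf{W}_n)$ for every $k$ (a $k$-word appearing in all $\mathbf{W}_n$ appears, after being shifted to a fixed window, in a cluster point of points chosen from the $\mathbf{W}_n$, which lies in $\mathbf{W}_\f$); as these word-sets are finite and nested they stabilise, whence $h(\mathbf{W}_n)\downarrow h(\mathbf{W}_\f)$ and $H(q_0^+)\le h(\mathbf{W}_\f)$.

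What remains is to prove $h(\mathbf{W}_\f)\le h(\vs_{q_0})$. When $\al$ is right-continuous at $q_0$ this is immediate: $\gamma=\al(q_0)$ and $\mathbf{W}_\f=\vs_{q_0}$. Otherwise $\gamma\succ\al(q_0)$, which — by the classical theory of greedy expansions — happens exactly when $1$ has a finite greedy expansion $\mathbf{w}0^\f$ in base $q_0$, with $\mathbf{w}=w_1\cdots w_l$ and $w_l\ge1$; then $\gamma=\mathbf{w}0^\f$ while $\al(q_0)=(\mathbf{w}^-)^\f$, and I would show that the two subshifts $\mathbf{W}_\f=\vs_{\mathbf{w}0^\f}$ and $\vs_{(\mathbf{w}^-)^\f}=\vs_{q_0}$ actually coincide. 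One inclusion is trivial because $(\mathbf{w}^-)^\f\prec\mathbf{w}0^\f$. For the other, given $(x_i)\in\vs_{\mathbf{w}0^\f}$ with $\si^j((x_i))\succ(\mathbf{w}^-)^\f$ for some $j$, I would inspect the first coordinate where $\si^j((x_i))$ overtakes $(\mathbf{w}^-)^\f$ and use that $\mathbf{w}^-$ agrees with $\mathbf{w}$ except in the last letter: this yields either a shift of $(x_i)$ strictly exceeding $\mathbf{w}0^\f$ (contradicting $(x_i)\in\vs_{\mathbf{w}0^\f}$) or a tail of $(x_i)$ equal to $\mathbf{w}0^\f$ — and the latter is impossible, since it would make a further shift equal $0^\f$, in violation of the requirement that every shift be $\lge\overline{\mathbf{w}0^\f}=\overline{\mathbf{w}}M^\f\succ0^\f$. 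The reflected argument rules out $\si^j((x_i))\prec\overline{(\mathbf{w}^-)^\f}$, so $(x_i)\in\vs_{(\mathbf{w}^-)^\f}$, and then $H(q_0^+)\le h(\mathbf{W}_\f)=h(\vs_{q_0})=H(q_0)$.

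The hard part is precisely this last step — identifying $\mathbf{W}_\f$ at the countably many $q_0$ where $\al$ is not right-continuous. It needs both the explicit form of $\al(q_0)$ and of $\gamma=\lim_{q\downarrow q_0}\al(q)$ coming from the theory of finite greedy expansions, and the combinatorial observation that the sequences that would distinguish $\vs_{\mathbf{w}0^\f}$ from $\vs_{(\mathbf{w}^-)^\f}$ are in fact excluded by the lower constraint of $\vs_{\mathbf{w}0^\f}$; everything else is monotonicity of $H$ plus standard facts about subshifts and their entropy.
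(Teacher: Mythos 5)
Your proof is correct in substance and takes a genuinely different route from the paper's. The paper sidesteps the discontinuity points of $\alpha$ entirely by invoking the result from \cite{Komornik-Kong-Li-2017} that $H$ is constant on each connected component of $(1,M+1]\setminus\ub$, so that only $q\in\ub$ needs to be treated — and on $\ub$ one has $\alpha(q)=\beta(q)$, making $\alpha$ right-continuous at $q$ for free. It then gets the entropy limit $h(\vs_{q,n})\to h(\vs_q)$ by citing $B_n(\vs_{q,n})=B_n(\vs_q)$ from the same reference. You instead prove the monotone convergence $h(\mathbf{W}_n)\downarrow h(\mathbf{W}_\infty)$ directly via a compactness argument, and you handle the countably many bases where $\alpha$ is right-discontinuous head-on, by identifying $\gamma$ as the finite greedy expansion $\mathbf{w}0^\infty$ and proving that the two subshifts $\vs_{\mathbf{w}0^\infty}$ and $\vs_{(\mathbf{w}^-)^\infty}$ coincide. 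That last identification is correct: your case analysis does rule out the threatening sequences (a shift agreeing with $\mathbf{w}$ up to position $l$ forces a further shift to be $0^\infty$, which violates the lower constraint since $\overline{\mathbf{w}}M^\infty\succ 0^\infty$; the reflected argument is symmetric, using that $M^\infty\succ\mathbf{w}0^\infty$). What your approach buys is self-containment — you avoid the two external citations the paper leans on — at the cost of having to work through the greedy/quasi-greedy discontinuity explicitly. What the paper's approach buys is brevity, given that the cited facts are already established in the literature.

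One small gap: your dispatch of $q_0\le q_{KL}$ is not quite complete at $q_0=q_{KL}$. Knowing $H\equiv 0$ on $(1,q_{KL}]$ gives $H(q_0)=H(q_0^+)=0$ for every $q_0<q_{KL}$ (take $q\in(q_0,q_{KL})$), but at $q_0=q_{KL}$ it only gives $H(q_{KL})=0$, not $\lim_{q\downarrow q_{KL}}H(q)=0$; right-continuity at the Komornik–Loreti constant is one of the non-trivial claims being proved, not a standing classical fact. The fix is easy and uses machinery you already have: $q_{KL}\in\ub$, so $\alpha(q_{KL})=\beta(q_{KL})$ is right-continuous there, $\gamma=\alpha(q_{KL})$, $\mathbf{W}_\infty=\vs_{q_{KL}}$, and your sandwich gives $H(q_{KL}^+)\le h(\vs_{q_{KL}})$; since Case 1 of the proof of Proposition \ref{prop:same-entropy} applies verbatim at $q_{KL}\in\overline{\ub}$, one has $h(\vs_{q_{KL}})=h(\us_{q_{KL}})=0$, closing the gap. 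You should either extend your sandwich argument to $q_0=q_{KL}$ in this way, or give a separate justification that $\lim_{q\downarrow q_{KL}}H(q)=0$.
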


\begin{proof}
By [21, Theorem 2.6] it follows that $H$ is constant on each connected component of $(1,M+1]\setminus\ub=(1, q_{KL})\cup\bigcup[q_0, q_0^*)$. So, we only need to prove the right continuity of $H$ on $\ub$. Take $q\in\ub$, then by Lemma \ref{l34} (i) $\al(q)=\beta(q)$, where $\beta(q)$ is the greedy $q$-expansion of $1$. We first show that
\begin{equation}
\lim_{n\to\infty} h(\vs_{q,n})\leq h(\vs_q).
\label{eq:v-entropy-inequality} 
\end{equation}
By \cite[Lemma 2.10]{Komornik-Kong-Li-2017}, $B_n(\vs_{q,n})=B_n(\vs_q)$ for each $n$. Hence
\[
h(\vs_{q,n})=\inf_{k\in\N}\frac{\log\#B_k(\vs_{q,n})}{k}\leq \frac{\log\#B_n(\vs_{q,n})}{n}=\frac{\log\#B_n(\vs_{q})}{n},
\]
and letting $n\to\infty$ gives \eqref{eq:v-entropy-inequality}.

Next, we can choose for each $n\in\N$ a base $p_n\in(q,M+1)$ sufficiently close to $q$ such that
$\alpha_i(p_n)=\alpha_i(q)$ for $i=1,\dots,n$. Since $\alpha(q)=\beta(q)$, we have
\[
\vs_q\subseteq \vs_p\subseteq \vs_{q,n} \qquad \mbox{for all $p\in(q,p_n)$}.
\]
It follows by \eqref{eq:v-entropy-inequality} that $\lim_{p\searrow q} h(\vs_p)=h(\vs_q)$, and then also $\lim_{p\searrow q} h(\us_p)=h(\us_q)$ in view of Proposition \ref{prop:same-entropy}.
\end{proof}

The proof of left continuity of $H$ is much more involved, and the next section is entirely devoted to this task.

\section{Left continuity of $H$}\label{s:left continuity}

Let $\bb$ be the \emph{bifurcation set} of the entropy function $H$, defined by
\[
\bb=\bb(M):=\set{q\in(1,M+1]: H(p)\ne H(q)\textrm{ for any }p\ne q}.
\]
Alcaraz Barrera et al.~\cite{AlcarazBarrera-Baker-Kong-2016} proved that $\bb\subseteq\ub$, and hence $\bb$ is of zero Lebesgue measure. They also showed that $\bb$ has full Hausdorff dimension.  Furthermore, $\bb$ has no isolated points and its complement can be written as
\begin{equation*} 
(1, M+1]\setminus\bb=(1,q_{KL}]\cup\bigcup[p_L, p_R],
\end{equation*}
where the union on the right hand side is countable and pairwise disjoint. From the definition of $\bb$ it follows that each connected component $[p_L, p_R]$ is a maximal interval on which $H$ is constant. Each closed interval $[p_L,p_R]$ is therefore called an \emph{entropy plateau}. 
Observe that $H$ is trivially left continuous on each half open connected component $(p_L, p_R]$ (including the first connected component $(1, q_{KL}]$). Hence it suffices to prove the left-continuity of $H$ at any 
\[
q\in(q_{KL}, M+1]\setminus\bigcup(p_L, p_R]=:\bb^L.
\] 
(The ``left bifurcation set" $\bb^L$ was introduced and studied in \cite{Allaart_Baker_Kong_2017}.) 
   
\begin{theorem} \label{th:left-continuity}
For any $q\in\bb^L$ we have 
\begin{equation}
\lim_{n\ra\f}h(\vs_{q,n})=\lim_{n\ra\f}h(\us_{q,n}).
\label{eq:same-entropy-limits}
\end{equation}
\end{theorem}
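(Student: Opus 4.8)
The plan is to fix an arbitrary $q\in\bb^L$, write $\al(q)=a_1a_2\ldots$, and compare the two subshifts of finite type $\vs_{q,n}$ and $\us_{q,n}$ word-by-word at each length $k$. Since $\us_{q,n}\subseteq\vs_{q,n}$, we have $\#B_k(\us_{q,n})\le\#B_k(\vs_{q,n})$ automatically, so the whole difficulty is to bound $\#B_k(\vs_{q,n})$ from above in terms of $\#B_k(\us_{q,n})$, uniformly enough in $k$ that the entropies agree in the limit. Concretely, I would construct for each $n$ and each $k$ a map $f_{n,k}\colon B_k(\vs_{q,n})\to B_k(\us_{q,n})$ and control its multiplicity: if every fiber of $f_{n,k}$ has size at most $C(n)^{?}$ times something subexponential in $k$, then $h(\vs_{q,n})\le h(\us_{q,n})+(\text{error depending on }n)$, and the error must tend to $0$ as $n\to\infty$.

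The mechanism for defining $f_{n,k}$ is the following. A word $w=x_1\ldots x_k\in B_k(\vs_{q,n})$ fails to lie in $B_k(\us_{q,n})$ precisely when some length-$n$ subword $x_{j+1}\ldots x_{j+n}$ equals $a_1\ldots a_n$ or its reflection $\overline{a_1\ldots a_n}$ (the ``boundary'' words), since $\vs_{q,n}$ allows equality and $\us_{q,n}$ demands strict inequality. The idea is to locate all occurrences of these boundary blocks inside $w$ and perturb each one minimally — e.g. replace a maximal run built out of $a_1\ldots a_n$-type blocks by a corresponding run of $a_1\ldots a_n^{-}$-type blocks, exactly as in the substitution $y_{n+i}^-(\alpha_1\ldots\alpha_n^-)^\infty$ used in Case 1 of the proof of Proposition \ref{prop:same-entropy}. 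Because $q\in\bb^L$, i.e. $q\in\overline{\ub}$ and $q$ is not interior to any plateau, Lemma \ref{lem:admissible-words} guarantees primitive prefixes $a_1\ldots a_m$ of $\al(q)$ for infinitely many $m$; primitivity is what makes such a local surgery produce a word that is genuinely admissible for the strict inequalities of $\us_{q,n}$, and it also pins down where the surgery is allowed to start, which is the key to bounding the number of preimages: a word in $B_k(\us_{q,n})$ can only have arisen from boundedly many (in terms of $n$, not $k$) words of $B_k(\vs_{q,n})$ because the positions and lengths of the altered runs are essentially recoverable, up to the ambiguity of a single digit near each run boundary and the number of such runs, which is at most $k/n$.

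Carrying this out, I would (i) recall the structure of $B_k(\vs_{q,n})\setminus B_k(\us_{q,n})$ in terms of occurrences of the boundary blocks and observe that two distinct occurrences are either disjoint or overlap in a way forced by the quasi-greedy inequality $\sigma^i(\al(q))\preceq\al(q)$, so the occurrences organize into disjoint maximal ``forbidden runs''; (ii) define $f_{n,k}$ by replacing each forbidden run by the corresponding $(\cdot)^-$ run and verify, using primitivity of a suitable prefix of $\al(q)$ together with Lemma \ref{l23}/the definition of $\us_{q,n}$, that the image lies in $B_k(\us_{q,n})$; (iii) bound $\#f_{n,k}^{-1}(w)$ by counting the ways to reconstruct the run data from $w$, obtaining something like $\#f_{n,k}^{-1}(w)\le (k+1)^{D(n)}$ or even $\le D(n)^{k/n}$ for a bound $D(n)$ that grows only polynomially (or at worst with $\log D(n)=o(n)$); and (iv) conclude $h(\vs_{q,n})\le h(\us_{q,n})+\tfrac{\log D(n)}{n}\to h(\us_{q,n})$, while the reverse inequality is immediate, giving \eqref{eq:same-entropy-limits}. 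The main obstacle is step (iii): making the multiplicity bound genuinely uniform in $k$ and with an $n$-dependence that vanishes after dividing by $n$ — in particular ruling out that a single $\us_{q,n}$-word has exponentially-in-$k$ many $\vs_{q,n}$-preimages — which is exactly the quantitative difficulty that the Perron–Frobenius approach of \cite{Komornik-Kong-Li-2017} failed to handle, and it will require a careful combinatorial analysis of how forbidden runs can be inserted, leaning on the lexicographic constraints defining $\vs_{q,n}$ and on primitivity to rigidify the picture.
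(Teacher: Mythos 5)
Your high-level plan is the same as the paper's: build a map $f_{n,k}\colon B_k(\vs_{q,n})\to B_k(\us_{q,n})$ whose multiplicity is at most $D(n)^{k/N}$ for some $N\sim n$ with $\tfrac{\log D(n)}{N}\to 0$, and deduce $h(\vs_{q,n})-h(\us_{q,n})\to 0$. The paper's final bound is $(2N)^{k/N}$ with $N=[n/2]$, exactly of the type you aim for. So the target in step (iv) and the general strategy are right.

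However, the mechanism you propose in step (ii) is not the paper's and, as stated, would not work. You suggest locating \emph{all} occurrences of $\su=a_1\ldots a_n$ or $\overline{\su}$, grouping them into maximal forbidden runs, and replacing each run in place by the corresponding $(\cdot)^-$ version, in the spirit of the substitution used in Case 1 of Proposition \ref{prop:same-entropy}. The difficulty is that an in-place, purely local replacement does not preserve membership in $B_k(\vs_{q,n})$: after you lower the last digit of one occurrence of $a_1\ldots a_n$ inside the word, a window $x_{j+1}\ldots x_{j+n}$ that overlaps the modification from the left may now dip below $\overline{a_1\ldots a_n}$. The analogous substitution in the proof of Proposition \ref{prop:same-entropy} only has to produce a single sequence in $\us_q$ containing a prescribed subword, not a word-to-word map on $B_k$; it also modifies a \emph{tail}, not an interior block, so there is no overlap problem to the right. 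Your maximal-run picture does not resolve the overlap problem to the left of each run, and there is no obvious way to make a scheme of disjoint local surgeries internally consistent without cascading fixes. The paper's actual construction (Definition \ref{def:general-F} and Lemma \ref{lem:24}) is quite different: it modifies only the \emph{first} occurrence of $\su$ or $\overline{\su}$, lowering the last digit of a carefully chosen primitive prefix $\sv$ of $\su$ and then \emph{reflecting the entire tail of the word after that point}. The reflection is the key trick you are missing: because $\vs_{q,n}$ is invariant under $\overline{\,\cdot\,}$, the reflected tail automatically satisfies all constraints, and the technical Lemma \ref{lem:24} shows the earliest forbidden occurrence moves forward by at least $n/2$ positions under one application, so iterating $F_{\su,\sv}$ terminates at a word in $B_k(\us_{q,n})$. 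The per-block ambiguity in the preimage (where within a window of length $N=[n/2]$ the $\sv\mapsto\sv^-$ cut was made, and whether that window was reflected) is what yields the $(2N)^{k/N}$ multiplicity bound.

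Two further ingredients you do not anticipate. First, primitivity of $a_1\ldots a_n$ alone is not enough; the construction needs the stronger structural information of Lemma \ref{lem:22}, in particular that $a_1\ldots a_n(\overline{a_1\ldots a_n}^+)^\infty\prec\alpha(q)$, which determines the decomposition \eqref{eq:kong-1} of $\alpha(q)$ and underlies the case split ($l(m)>0$ infinitely often, versus $l(m)=0$ eventually, the latter subdividing further). Second, there is a genuine exceptional case, $\alpha(q)=a_1\ldots a_r(\overline{a_1\ldots a_r}^+)^\infty$ with $a_1\ldots a_r$ primitive (Lemma \ref{lem:left-continuity-zero-3}), in which no map $f_{n,k}$ of this type was found, and the paper instead computes $h(\vs_q)=\tfrac{\log 2}{r}$ via the two-state labeled graph $\mathcal G$ and approximates from below by the SFTs $X_{\mathcal G,n}$. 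Your proposal has no provision for this case, and it cannot be handled by the surgery-plus-multiplicity framework as you describe it.
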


\begin{corollary} \label{cor:left-continuity}
The function $H$ is left continuous on $\bb^L$.
\end{corollary}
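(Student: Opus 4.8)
The plan is to deduce Corollary~\ref{cor:left-continuity} directly from Theorem~\ref{th:left-continuity} together with the sandwich $\us_{q,n}\subseteq\vs_q\subseteq\vs_{q,n}$ and the monotonicity of the maps $q\mapsto\us_q$, $q\mapsto\vs_q$. First I would fix $q\in\bb^L$ and reduce the problem to showing $\lim_{p\nearrow q}h(\vs_p)=h(\vs_q)$; the corresponding statement for $\us$ then follows from Proposition~\ref{prop:same-entropy} (after noting, via Lemma~\ref{l34}, that $q_{KL}<q$ puts us in the regime where $h(\us_q)=h(\vs_q)$, and similarly for nearby $p$). Since $\vs_p\subseteq\vs_q$ for $p<q$, the entropy function $p\mapsto h(\vs_p)$ is nondecreasing, so $\lim_{p\nearrow q}h(\vs_p)$ exists and is $\le h(\vs_q)$; the work is the reverse inequality.

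The key step is to sandwich $\vs_q$ from below by the $\vs_{p,n}$ for $p$ slightly less than $q$. By Lemma~\ref{l21} the map $p\mapsto\alpha(p)$ is strictly increasing and left-continuous, so for each fixed $n$ we may choose $p=p_n<q$ close enough to $q$ that $\alpha_i(p)=\alpha_i(q)$ for $i=1,\dots,n$; then directly from the defining inequalities, $\us_{q,n}=\us_{p,n}\subseteq\vs_p\subseteq\vs_q$, whence $h(\us_{q,n})\le h(\vs_p)\le\lim_{p\nearrow q}h(\vs_p)$ for every $n$. Letting $n\to\infty$ and invoking Theorem~\ref{th:left-continuity}, we get
\[
h(\vs_q)\ge\lim_{n\to\infty}h(\vs_{q,n})=\lim_{n\to\infty}h(\us_{q,n})\le\lim_{p\nearrow q}h(\vs_p)\le h(\vs_q),
\]
so all quantities coincide and $\lim_{p\nearrow q}h(\vs_p)=h(\vs_q)$. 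Here I have also used the standard fact (already used in Proposition~\ref{prop:right-continuity}) that $\us_{q,n}\subseteq\vs_q\subseteq\vs_{q,n}$ forces $\lim h(\us_{q,n})\le h(\vs_q)\le\lim h(\vs_{q,n})$, so Theorem~\ref{th:left-continuity} pins $h(\vs_q)$ between two equal limits.

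Finally I would translate back to $H$. For $q\in\bb^L$ with $q>q_{KL}$, Proposition~\ref{prop:same-entropy} gives $H(p)=h(\us_p)=h(\vs_p)$ for all $p$ in a left-neighbourhood of $q$ (such $p$ lie in $(q_{KL},M+1)$), so $\lim_{p\nearrow q}H(p)=\lim_{p\nearrow q}h(\vs_p)=h(\vs_q)=h(\us_q)=H(q)$. Combined with the observation in the text that $H$ is trivially left continuous on each half-open component $(p_L,p_R]$ and on $(1,q_{KL}]$, this establishes left continuity of $H$ on all of $(1,M+1]$; together with Proposition~\ref{prop:right-continuity} this is exactly Theorem~\ref{thm:main1}. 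I do not expect a genuine obstacle here: the only subtlety is bookkeeping the case distinctions from Lemma~\ref{l34} so that Proposition~\ref{prop:same-entropy} applies to $q$ and to all sufficiently close $p<q$, and making sure the chain of inequalities above is assembled in the right order (the displayed line above is schematic and would be split into two clean inequalities in the actual write-up). The real content is Theorem~\ref{th:left-continuity}; the corollary is a short deduction.
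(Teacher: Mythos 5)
Your proposal is correct and follows essentially the same route as the paper: fix $q\in\bb^L$, use left-continuity of $\alpha$ to pick $p_n<q$ matching $\alpha(q)$ to depth $n$ so that $\us_{q,n}\subseteq\vs_p\subseteq\vs_q\subseteq\vs_{q,n}$, invoke Theorem~\ref{th:left-continuity} to force $\lim_n h(\us_{q,n})=h(\vs_q)$, and transfer to $H$ via Proposition~\ref{prop:same-entropy}. (Minor note: in your schematic display the first inequality should read $h(\vs_q)\le\lim_n h(\vs_{q,n})$, not $\ge$, as you in fact state correctly in the surrounding prose.)
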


Before proving Theorem \ref{th:left-continuity}, we show how to derive the corollary.

\begin{proof}[Proof of Corollary \ref{cor:left-continuity}]
Fix $q\in\bb^L$. For each $n$ we can choose a base $p_n\in(1,q)$ sufficiently close to $q$ so that $\alpha_i(p_n)=\alpha_i(q)$ for $i=1,\dots,n$. Then
\[
\us_{q,n}\subseteq \vs_p \subseteq \vs_q \subseteq \vs_{q,n} \qquad\mbox{for all $p\in(p_n,q)$}.
\]
By \eqref{eq:same-entropy-limits} and the above inclusions, $\lim_{n\to\infty} h(\us_{q,n})=h(\vs_q)$. Hence, $\lim_{p\nearrow q} h(\vs_p)=h(\vs_q)$, and then also $\lim_{p\nearrow q} h(\us_p)=h(\us_q)$ in view of Proposition \ref{prop:same-entropy}.
\end{proof}

Our approach to proving Theorem \ref{th:left-continuity} is to construct, for arbitrarily large numbers $n$ and for all $k\in\N$, a map $f_{n,k}: B_k(\vs_{q,n}) \to B_k(\us_{q,n})$ that is ``not too many"-to-one. (We will specify later what ``not too many" means.) This will show that the set $B_k(\vs_{q,n})$ is not too much larger than $B_k(\us_{q,n})$, and as a consequence, $h(\vs_{q,n})$ is not too much larger than $h(\us_{q,n})$.

Recall the definition of a primitive word from Definition \ref{def:admissible}.   
\begin{lemma}\label{lem:22}
Let $q\in\bb^L$ with $\al(q)=(a_i)$. 
\begin{enumerate}[{\rm(i)}]
\item There exist infinitely many integers $n$ such that 
\begin{equation}
a_1\ldots a_n\textrm{ is primitive}\quad \textrm{and}\quad (a_1\ldots a_n^-)^\f\succ\al(q_{KL}).
\label{eq:admissible-and-more}
\end{equation}
\item If \eqref{eq:admissible-and-more} holds, then $a_1\ldots a_n(\overline{a_1\ldots a_n}^+)^\f\prec (a_i)$.
\end{enumerate}
\end{lemma}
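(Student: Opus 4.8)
\textbf{Proof plan for Lemma \ref{lem:22}.}

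For part (i), the plan is to combine the two earlier facts about $\bb^L$. Since $q\in\bb^L\subseteq\bb\subseteq\ub$ (by the result of Alcaraz Barrera et al.), in particular $q\in\overline{\ub}$, so Lemma \ref{lem:admissible-words} already gives infinitely many $n$ for which $a_1\ldots a_n$ is primitive. The only extra content is the lexicographic condition $(a_1\ldots a_n^-)^\f\succ\al(q_{KL})$. First I would observe that, since $q>q_{KL}$, Lemma \ref{l21} gives $\al(q)\succ\al(q_{KL})$, so there is a fixed index $N_0$ with $a_1\ldots a_{N_0}\succ \al_1(q_{KL})\ldots\al_{N_0}(q_{KL})$ (strict inequality already visible in the first $N_0$ digits). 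Then for any $n\geq N_0$ we have $a_1\ldots a_n^-\succeq a_1\ldots a_{N_0}\,0^{n-N_0}\succ \al_1(q_{KL})\ldots\al_{N_0}(q_{KL})\,0^{n-N_0}$ — here one must be slightly careful that decrementing the last digit of $a_1\ldots a_n$ does not disturb the first $N_0$ digits, which holds as soon as $n>N_0$ — and hence $(a_1\ldots a_n^-)^\f\succeq a_1\ldots a_{N_0}0^\f\succ\al(q_{KL})$. Restricting the infinitely many primitive-prefix indices from Lemma \ref{lem:admissible-words} to those exceeding $N_0$ still leaves infinitely many, and for all of those both conditions in \eqref{eq:admissible-and-more} hold.

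For part (ii), the goal is the strict inequality $a_1\ldots a_n(\overline{a_1\ldots a_n}^+)^\f\prec(a_i)$. Write $w=a_1\ldots a_n$, so we must show $w\,(\overline{w}^+)^\f\prec w\,a_{n+1}a_{n+2}\ldots$, i.e. (cancelling the common prefix $w$) that $(\overline{w}^+)^\f\prec \si^n(\al(q))$. I would split off the first block: $(\overline{w}^+)^\f = \overline{w}^+\,(\overline{w}^+)^\f$ and compare with $\si^n(\al(q))=a_{n+1}a_{n+2}\ldots$. Since $w$ is primitive, taking $i=0$ in Definition \ref{def:admissible} is vacuous, but the primitivity inequalities for $0<i<n$, together with $q\in\ub$ and Lemma \ref{l34}(i) applied to the tail $\si^n(\al(q))$, give $\overline{\al(q)}\prec\si^n(\al(q))$, hence in particular $\overline{w}\preceq a_{n+1}\ldots a_{2n}$, actually $\overline{w}\prec a_{n+1}\ldots a_{2n}$ or else there is a forced structure. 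The cleanest route is: either $\overline{w}^+\preceq a_{n+1}\ldots a_{2n}$ with strict inequality somewhere in the first $n$ digits, and we are done immediately; or $a_{n+1}\ldots a_{2n}=\overline{w}$ exactly (the case $a_{n+1}\ldots a_{2n}=\overline{w}^+$ being impossible here because it would force, via the admissibility of $(a_i)$ from Lemma \ref{l21}, a contradiction with $(a_1\ldots a_n^-)^\f\succ\al(q_{KL})$ — this is where the extra hypothesis $(a_1\ldots a_n^-)^\f\succ\al(q_{KL})$ gets used). In the remaining case $a_{n+1}\ldots a_{2n}=\overline{w}$, we compare the next block: $\si^{2n}(\al(q))$ must satisfy $\si^{2n}(\al(q))\succ\overline{\al(q)}$ by Lemma \ref{l34}(i), so $a_{2n+1}\ldots a_{3n}\succeq\overline{w}\succ\overline{w}^+$ is false — rather $a_{2n+1}\ldots\succeq\overline{\al(q)}\succ(\overline{w}^+)0^\f$ would need checking — so I would instead directly show $(\overline{w}^+)^\f\prec\overline{w}\,\si^{2n}(\al(q))=\si^n(\al(q))$ by noting $(\overline{w}^+)^\f\succ\overline{w}\,0^\f$ cannot be the obstruction; the correct comparison is that after the common prefix $\overline{w}$ we need $(\overline{w}^+)^\f\big|_{\text{shifted}} \prec \si^{2n}(\al(q))$, and since $\si^{2n}(\al(q))\succ\overline{\al(q)}\succeq \overline{w}^+ 0^\f$ while $(\overline{w}^+)^\f$ after one period returns to $\overline{w}^+\ldots$, an induction on blocks closes it, the base of the induction being exactly the $(a_1\ldots a_n^-)^\f\succ\al(q_{KL})$ estimate transplanted through reflection.

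\textbf{Main obstacle.} The routine part is (i); the real work is the block-by-block lexicographic comparison in (ii), and specifically ruling out the ``tie'' scenario where $a_{n+1}\ldots a_{2n}$ equals $\overline{w}$ or $\overline{w}^+$. I expect the crux to be showing that the equality $a_{n+1}\ldots a_{2n}=\overline{w}^+$ is incompatible with the second condition of \eqref{eq:admissible-and-more}: one translates $(w^-)^\f\succ\al(q_{KL})$ into a statement about $\si^n(\al(q))$ via the reflection symmetry and the admissibility chain $\si^m(\al(q))\preceq\al(q)$ from Lemma \ref{l21}, and must check that a collision $a_{n+1}\ldots a_{2n}=\overline{w}^+$ would force $\al(q)$ to agree with a Thue-Morse-like pattern making $(w^-)^\f\preceq\al(q_{KL})$, contradicting the hypothesis. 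Making that implication precise — keeping track of where strict versus non-strict inequalities occur across two or three consecutive length-$n$ blocks — is the delicate bookkeeping that the rest of the argument reduces to.
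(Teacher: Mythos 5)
Your proof of part (i) is correct and is essentially the argument the paper gives: use $\bb^L\subseteq\overline{\ub}$ together with Lemma \ref{lem:admissible-words} to get infinitely many primitive prefixes, and observe that $q>q_{KL}$ forces $(a_1\ldots a_n^-)^\f\succ\al(q_{KL})$ once $n$ is past the first position where $\al(q)$ and $\al(q_{KL})$ differ.

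Part (ii), however, has a genuine gap, and not merely a bookkeeping one: the block-by-block lexicographic strategy cannot succeed with the inputs you are using. The only properties of $q$ you actually invoke are $q\in\ub$ (through Lemma \ref{l34}(i)), the primitivity of $a_1\ldots a_n$, and the hypothesis $(a_1\ldots a_n^-)^\f\succ\al(q_{KL})$. But there are bases $q\notin\bb^L$ satisfying all of these for which the conclusion fails. Concretely, take $q=q_R$ with $\al(q_R)=a_1\ldots a_n(\overline{a_1\ldots a_n}^+)^\f$ for a primitive prefix with $n$ large. Then $q_R\in\ub$, $(a_1\ldots a_n^-)^\f\succ\al(q_{KL})$ can certainly hold, and the ``tie'' you assert to be impossible, namely $a_{n+1}\ldots a_{2n}=\overline{a_1\ldots a_n}^+$, actually occurs --- so no contradiction with \eqref{eq:admissible-and-more} is forced --- while the claimed conclusion $a_1\ldots a_n(\overline{a_1\ldots a_n}^+)^\f\prec\al(q)$ degenerates to equality. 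Worse, the remaining subcase you try to salvage, $a_{n+1}\ldots a_{2n}=\overline{a_1\ldots a_n}$, immediately gives $(\overline{a_1\ldots a_n}^+)^\f\succ\si^n(\al(q))$ at the $n$th digit, so that case cannot be ``closed by induction''; it must be \emph{ruled out}, and nothing in your toolbox rules it out.

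What is missing is the full strength of $q\in\bb^L$, and the paper uses it in an essentially non-local way. The paper's proof of (ii) sets $\al(q_L)=(a_1\ldots a_n^-)^\f$ and $\al(q_R)=a_1\ldots a_n(\overline{a_1\ldots a_n}^+)^\f$, shows (following the plateau lemmas of \cite{AlcarazBarrera-Baker-Kong-2016}) that $H$ is constant on $[q_L,q_R]$, so $(q_L,q_R]$ is contained in one of the half-open intervals $(p_L,p_R]$ removed in the definition of $\bb^L$. Since $q>q_L$ (because $\al(q)\succ(a_1\ldots a_n^-)^\f$) and $q\in\bb^L$, this forces $q>q_R$, hence $\al(q)\succ\al(q_R)=a_1\ldots a_n(\overline{a_1\ldots a_n}^+)^\f$ by Lemma \ref{l21}. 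If you want a lexicographic proof you would first need a lexicographic characterization of membership in $\bb^L$; the entropy-plateau route is the shortcut the paper takes precisely to avoid that.
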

	
\begin{proof}
Since $\bb^L\subseteq\overline{\ub}$, by Lemma 2.5 there are infinitely many $n\in\N$ such that $a_1\ldots a_n$ is primitive. Furthermore, for all large enough $n$, $(a_1\ldots a_n^-)^\f\succ\al(q_{KL})$ since $q>q_{KL}$. So, it suffices to show that for such a large integer $n$ we have $a_1\ldots a_n(\overline{a_1\ldots a_n}^+)^\f\prec \al(q)$.

Take such a large integer $n$, and let $[q_L, q_R]$ be the interval determined by
\[
\al(q_L)=(a_1\ldots a_n^-)^\f\qquad\textrm{and}\qquad \al(q_R)=a_1\ldots a_n(\overline{a_1\ldots a_n}^+)^\f.
\]
By a similar argument as in the proofs of \cite[Lemmas 5.1 and 5.5]{AlcarazBarrera-Baker-Kong-2016} one can show that $H$ is constant on $[q_L, q_R]$. Since $q>q_L$, by the definition of $\bb^L$ it follows that $q>q_R$, and hence $a_1\ldots a_n(\overline{a_1\ldots a_n}^+)^\f\prec \al(q)$ by Lemma \ref{l21}.
\end{proof}

\begin{definition}\label{def:n}
For $q\in\bb^L$ with $\alpha(q)=(a_i)$, let 
\[
\mathcal{N}(q):=\{n\in\N: a_1\dots a_n\textrm{ is primitive and }(a_1\ldots a_n^-)^\f\succ\al(q_{KL})\}. 
\]
\end{definition}

Take $q\in\bb^L$ and fix $m\in\mathcal N(q)$.  By Lemma \ref{lem:22} (ii) it follows that    
$a_1\ldots a_m(\overline{a_1\ldots a_m}^+)^\f\prec \al(q).$
So, there exist integers $l=l(m)\ge 0$ and $r=r(m)\in\set{1,\ldots, m}$ such that for $n=n(m):=m(l+1)+r$,
  \begin{equation} \label{eq:kong-1}
  \begin{split}
& a_1\ldots a_{n-1}=a_1\ldots a_m(\overline{a_1\ldots a_m}^+)^l\overline{a_1\ldots a_{r-1}},\\
  & a_{n}>\overline{a_r}\qquad\textrm{if}\quad r<m,\\
  & a_{n}>\overline{a_m}^+\quad\textrm{if}\quad r=m.
  \end{split}
  \end{equation}
  We point out that the integers $l, r$ and $n$ all depend on $m$ (and, of course, on $q$). However, most of the time the base $q\in\bb^L$ is fixed, and if the value of $m$ is implicitly understood we will write $l, r$ and $n$ instead of $l(m), r(m)$ and $n(m)$.

  \begin{lemma} \label{lem:23} 
	Let $q\in\bb^L$ with $\al(q)=(a_i)$. Suppose $m\in\mathcal N(q)$ and $n=m(l+1)+r$ as in (\ref{eq:kong-1}). The following statements hold.
  \begin{enumerate}
  \item[{\rm{(i)}}] $a_{n-m+1}\ldots a_{n}^-\succ\overline{a_1\ldots a_m}$.
  
  \item[{\rm{(ii)}}] $n\in\mathcal N(q)$. Thus, $a_1\ldots a_n$ is primitive and $a_1\ldots a_n(\overline{a_1\ldots a_n}^+)^\f\prec\al(q)$. 
  \item[{\rm{(iii)}}] $a_1\ldots a_r$ is primitive. 
  \end{enumerate}
  \end{lemma}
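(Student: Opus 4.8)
The three parts of Lemma~\ref{lem:23} are all lexicographic manipulations driven by the structure \eqref{eq:kong-1}, which expresses $a_1\dots a_{n-1}$ as the primitive block $a_1\dots a_m$ followed by $l$ copies of $\overline{a_1\dots a_m}^+$ followed by $\overline{a_1\dots a_{r-1}}$, with a controlled first ``overshoot'' digit $a_n$. I would treat them in the order (i), (iii), (ii), since (i) is a direct consequence of the defining inequality $a_n>\overline{a_r}$ (or $a_n>\overline{a_m}^+$), (iii) isolates a prefix phenomenon, and (ii) requires the full primitivity check for the longer word $a_1\dots a_n$ and then invokes Lemma~\ref{lem:22}(ii).

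For (i): observe that $a_{n-m+1}\dots a_{n-1}$ is, by \eqref{eq:kong-1}, a suffix of $(\overline{a_1\dots a_m}^+)^l\overline{a_1\dots a_{r-1}}$ of length $m-1$; when $r<m$ this suffix is exactly $\overline{a_{r}\dots a_{m-1}}$ followed by $\overline{a_1\dots a_{r-1}}$, i.e.\ a shifted reflection, and when $r=m$ it is $\overline{a_1\dots a_{m-1}}$ itself with $a_n>\overline{a_m}^+$. In either case I compare $a_{n-m+1}\dots a_n^-$ with $\overline{a_1\dots a_m}=\overline{a_1}\dots\overline{a_m}$ digit by digit: the prefixes agree up to the last digit because of the reflection structure, and the strict inequality on the final digit ($a_n^-\ge \overline{a_r}\,$, resp.\ $a_n^-\ge \overline{a_m}^+>\overline{a_m}$, after subtracting one) closes the comparison. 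This is essentially bookkeeping with $\overline{(\cdot)}$ and $(\cdot)^+,(\cdot)^-$.

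For (iii): I would derive primitivity of $a_1\dots a_r$ from primitivity of $a_1\dots a_m$ (given since $m\in\mathcal N(q)$). The point is that $a_1\dots a_r$ is a prefix of the primitive word $a_1\dots a_m$, and one must check $\overline{a_1\dots a_{r-i}}\prec a_{i+1}\dots a_r\preccurlyeq a_1\dots a_{r-i}$ for $0\le i<r$. The right-hand inequality is immediate from Lemma~\ref{l21} applied to $\alpha(q)$ (equivalently from primitivity of the longer block). For the left-hand strict inequality I would use the defining ``overshoot'' at position $n$: the inequality $a_n>\overline{a_r}$ in \eqref{eq:kong-1} forces, via the quasi-greedy inequalities on $(a_i)$, that the reflected prefix $\overline{a_1\dots a_{r-1}}$ sitting at positions $n-r+1,\dots,n-1$ cannot be extended to equal $\overline{a_1\dots a_r}$, and pulling this back through the self-similar structure (the block $(\overline{a_1\dots a_m}^+)^l$ is built from reflections of $a_1\dots a_m$) yields the required strict inequalities for the shorter word. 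This mirrors standard arguments showing that prefixes of admissible sequences inherit primitivity.

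For (ii): I must show $a_1\dots a_n$ is primitive, i.e.\ $\overline{a_1\dots a_{n-i}}\prec a_{i+1}\dots a_n\preccurlyeq a_1\dots a_{n-i}$ for all $0\le i<n$. The upper inequalities again follow from Lemma~\ref{l21}. For the lower (strict) inequalities I would split the range of $i$ according to where the shift $\sigma^i$ lands relative to the block decomposition in \eqref{eq:kong-1}: for $0\le i<m$ use primitivity of $a_1\dots a_m$ together with part~(i); for $i$ inside one of the $(\overline{a_1\dots a_m}^+)$ blocks use the reflection symmetry $\overline{\overline{a_1\dots a_m}^+}=a_1\dots a_m^-$ and the quasi-greedy inequalities; and for $i$ in the final $\overline{a_1\dots a_{r-1}}$ segment use part~(iii). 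Once $a_1\dots a_n$ is primitive, the condition $(a_1\dots a_n^-)^\infty\succ\alpha(q_{KL})$ needed for $n\in\mathcal N(q)$ is inherited from $(a_1\dots a_m^-)^\infty\succ\alpha(q_{KL})$ because $a_1\dots a_n$ begins with $a_1\dots a_m$ and one checks the periodic sequences compare the right way; then Lemma~\ref{lem:22}(ii) immediately gives $a_1\dots a_n(\overline{a_1\dots a_n}^+)^\infty\prec\alpha(q)$.

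\textbf{Main obstacle.} The delicate part is the lower (strict) inequalities in the primitivity check for $a_1\dots a_n$ when the shift index $i$ falls at the \emph{junctions} between the $(\overline{a_1\dots a_m}^+)$ blocks (and at the junction with the tail $\overline{a_1\dots a_{r-1}}$): there the word $a_{i+1}\dots a_n$ is a concatenation of a suffix of one reflected-and-incremented block with the start of the next, and one has to rule out the ``bad'' equality case carefully, using precisely how much the $(\cdot)^+$ increment buys and the strict overshoot $a_n>\overline{a_r}$ at the very end. I expect this case analysis at the block boundaries, together with keeping the reflections and $\pm$ increments straight, to be where essentially all the real work lies; parts~(i) and~(iii) are comparatively routine once the notation is set up.
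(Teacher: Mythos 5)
Your plan tracks the paper's proof closely: part (i) by direct inspection of \eqref{eq:kong-1} together with primitivity of $a_1\dots a_m$; part (ii) by reducing, via Lemma \ref{l21}, to the one-sided inequalities $a_{j+1}\dots a_n^-\succcurlyeq\overline{a_1\dots a_{n-j}}$ and verifying these block-by-block against the decomposition \eqref{eq:kong-1}; part (iii) via the ``overshoot'' $a_n>\overline{a_r}$. The one structural difference is the order: you prove (iii) before (ii), whereas the paper derives (iii) from (ii). Both orders work, because the inequality one actually needs for (iii) is the chain $\overline{a_{i+1}\dots a_r}\prec a_{n-r+i+1}\dots a_n\preccurlyeq a_1\dots a_{r-i}$; the first half comes from $a_{n-r+1}\dots a_{n-1}=\overline{a_1\dots a_{r-1}}$ together with $a_n\geq\overline{a_r}+1$, and the second half can be read off either directly from Lemma \ref{l21} (your route) or from the already-established primitivity of $a_1\dots a_n$ (the paper's route). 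Two cautions on your phrasing: describing this step as ``pulling back through the self-similar structure'' overcomplicates what is a single shift comparison, not a cascade through the $\overline{a_1\dots a_m}^+$ blocks; and the closing remark that this ``mirrors standard arguments showing prefixes of admissible sequences inherit primitivity'' is misleading, since prefixes of primitive words are not in general primitive --- the overshoot at position $n$ is exactly what saves the argument and is not a routine inheritance. Otherwise your case split for (ii) at the block junctions and the concluding observation $(a_1\dots a_n^-)^\infty\succ(a_1\dots a_m^-)^\infty\succ\alpha(q_{KL})$ match the paper.
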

  
  \begin{proof}
  First we prove (i). From (\ref{eq:kong-1}) we see that if $r=m$, then $a_{n-m+1}\ldots a_n^-\lge \overline{a_1\ldots a_m}^+\succ \overline{a_1\ldots a_m}$. If $r<m$ with $l>0$, then $n-m=ml+r$, and it follows by primitivity of $a_1\ldots a_m$ that 
  \[a_{n-m+1}\ldots a_{n-r}=\overline{a_{r+1}\ldots a_m}^+\succ\overline{a_1\ldots a_{m-r}},\]
  which implies $a_{n-m+1}\ldots a_n^-\succ \overline{a_1\ldots a_m}$. Furthermore, if $r<m$ with $l=0$, then we deduce from the primitivity of $a_1\ldots a_m$ that 
  \[a_{n-m+1}\ldots a_{n-r}=a_{r+1}\ldots a_m\succ \overline{a_1\ldots a_{m-r}}.\]
  Again this yields $a_{n-m+1}\ldots a_n^-\succ \overline{a_1\ldots a_m}$. So (i) holds. 
  
  For (ii), since $(a_1\ldots a_n^-)^\f\succ(a_1\ldots a_m^-)^\f\succ\al(q_{KL})$, by Lemma \ref{lem:22} (ii) it suffices to prove that $a_1\ldots a_n$ is primitive. 
	By Lemma \ref{l21},  
  \[
  a_{j+1}\ldots a_n^-\prec a_{j+1}\ldots a_n\lle a_1\ldots a_{n-j}\quad\textrm{for all }0\le j<n,
  \]
  so by Definition \ref{def:admissible} it suffices to prove that
  \begin{equation}\label{eq:hx-21}
  a_{j+1}\ldots a_n^-\lge \overline{a_1\ldots a_{n-j}}\quad\textrm{for all }0\le j<n.
  \end{equation}
  Note by (\ref{eq:kong-1}) that $n>m$, and $a_1\dots a_n^-$ begins with $a_1\dots a_m$.
		The primitivity of $a_1\ldots a_m$ gives 
  \begin{equation*}
  a_{j+1}\ldots a_m\succ \overline{a_1\ldots a_{m-j}} \quad \textrm{and}\quad \overline{a_{j+1}\ldots a_m}^+\succ \overline{a_1\ldots a_{m-j}}
  \end{equation*}
  for all $0\le j<m$. The first inequality gives \eqref{eq:hx-21} for $0\leq j<m$; the second inequality implies \eqref{eq:hx-21} for $m\leq j<n$, using \eqref{eq:kong-1}.
This establishes (ii). 
  
  Now we turn to prove (iii). If $r=m$, then (iii) follows trivially from the primitivity of $a_1\ldots a_m$. Suppose $r<m$. It suffices to prove 
  \[
  a_{i+1}\ldots a_r^-\lge \overline{a_1\ldots a_{r-i}}\quad\textrm{for all }0\le i<r.
  \]
  This follows from (ii) and (\ref{eq:kong-1}), since
  \[
  \overline{a_{i+1}\ldots a_r^-}\lle a_{n-r+i+1}\ldots a_n\lle a_1\ldots a_{r-i} 
  \] 
	for all $0\le i<r$.
  \end{proof}

 From now on, we will fix a base $q\in\bb^L$.

Fix a number $k>n$. We will construct a map $f_{n, k}: B_k(\vs_{q,n})\to B_k(\us_{q,n})$ and show that this map is ``not too many"-to-one. 
The map $f_{n,k}$ will be defined as the $k$th iterate of an auxiliary function $F_{\su,\sv}$; see Definition \ref{def:general-F} below. We also use the following notation:

\begin{definition}
For a primitive word $\su={a_1\dots a_n}$ and a word $\sx=x_1\dots x_k\in B_k(\vs_{q,n})$, let
\[
i_\su(\sx):=\min\{i\leq k-n: x_{i+1}\dots x_{i+n}=\su\ \mbox{or}\ x_{i+1}\dots x_{i+n}=\overline{\su}\},
\]
or $i_\su(\sx):=\infty$ if no such $i$ exists.
\end{definition}

Thus, $i_\su(\sx)$ indicates where the word $\su$ or $\overline{\su}$ occurs for the first time in the word $\sx$. Note that $i_\su(\sx)=\infty$  if and only if $\sx\in B_k(\us_{q,n})$.

\begin{definition} \label{def:general-F}
Let $\su=a_1\dots a_n$ be primitive, and let $\sv=a_1\dots a_m$ be a primitive prefix of $\su$. Write $\su={\sv\mathbf z}$. Then we define the map $F_{\su,\sv}: B_k(\vs_{q,n})\to\{0,1,\dots,M\}^k$ as follows:
\begin{enumerate}
 \item If $\sx=x_1\ldots x_k\in B_k(\vs_{q,n})$ does not contain the word $\su$ or $\overline{\su}$, then set $F_{\su,\sv}(\sx):=\sx$. 
 \item Otherwise, let $i:=i_\su(\sx)$. 
 \begin{itemize}
 \item If $x_{i+1}\ldots x_{i+n}=\su$, then we put
   \begin{align*}
   F_{\su,\sv}(\sx)=F_{\su,\sv}(x_1\ldots x_i\sv{\mathbf z} x_{i+n+1}\ldots x_k):=x_1\ldots x_i\sv^-\overline{{\mathbf z} x_{i+n+1}\ldots x_k}.
   \end{align*}
 \item If $x_{i+1}\ldots x_{i+n}=\overline{\su}$, then we put 
 \[
  F_{\su,\sv}(\sx)=F_{\su,\sv}(x_1\ldots x_i\overline{\sv{\mathbf z}} x_{i+n+1}\ldots x_k):=x_1\ldots x_i \overline{\sv}^+ {\mathbf z} \overline{x_{i+n+1}\ldots x_k}.
 \]
\end{itemize}
\end{enumerate}
\end{definition}

From Definition \ref{def:general-F} it follows that 
\[F_{\su,\sv}(\overline{\sx})=\overline{F_{\su,\sv}(\sx)}\quad \textrm{for any }\sx\in B_k(\vs_{q,n}).\]

In each of the cases worked out below, the key is to choose $\su$ and $\sv$ carefully and show that $F_{\su,\sv}$ maps $B_k(\vs_{q,n})$ into itself, so that the $k$th iterate $F_{\su,\sv}^k$ is well defined and maps $B_k(\vs_{q,n})$ into $B_k(\us_{q,n})$.

\subsection{Construction of $f_{n,k}$: the first case.}
	
Assume first that $l(m)>0$ for infinitely many $m\in\mathcal N(q)$.
Take $q\in\bb^L$ and fix $m\in\mathcal N(q)$ with $l=l(m)>0$. Let $n=m(l+1)+r$ as in (\ref{eq:kong-1}).       Write 
  \[
	\su:=a_1\ldots a_n=\sv(\overline{\sv}^+)^l \sw,\quad\textrm{where}\quad \sv:=a_1\ldots a_m,\quad \sw:=a_{n-r+1}\ldots a_n.
	\]
With $\su$ and $\sv$ as above, we set $F:=F_{\su,\sv}$. The following lemma shows that $F$ is a map from $B_k(\vs_{q,n})$ to $B_k(\vs_{q,n})$.

 \begin{lemma} \label{lem:24}
 For any $\sx\in B_k(\vs_{q,n})$ we have $F(\sx)\in B_k(\vs_{q,n})$, and  
\[
i_\su(F(\sx))\geq i_\su(\sx)+\frac{n}{2}.
\]
 \end{lemma}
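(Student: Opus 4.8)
The statement has two parts: (a) $F(\sx)\in B_k(\vs_{q,n})$, and (b) the ``first occurrence index'' of $\su$ (or $\overline{\su}$) moves forward by at least $n/2$ under one application of $F$. I would prove both by a careful case analysis based on whether $\sx$ contains $\su$ or $\overline{\su}$, and by exploiting the reflection symmetry $F(\overline{\sx})=\overline{F(\sx)}$ (noted right after Definition~\ref{def:general-F}) to reduce to the case $x_{i+1}\dots x_{i+n}=\su$, where $i:=i_\su(\sx)$.

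\textbf{Step 1: trivial case.} If $\sx$ contains neither $\su$ nor $\overline{\su}$, then $F(\sx)=\sx$, so $F(\sx)\in B_k(\vs_{q,n})$ and $i_\su(F(\sx))=i_\su(\sx)=\infty$, which vacuously satisfies the inequality. Assume from now on $i:=i_\su(\sx)<\infty$, and by reflection symmetry assume $x_{i+1}\dots x_{i+n}=\su$. Then $F(\sx)=x_1\dots x_i\,\sv^-\,\overline{{\mathbf z}\,x_{i+n+1}\dots x_k}$, where $\su=\sv{\mathbf z}$, $\sv=a_1\dots a_m$, and here ${\mathbf z}=(\overline{\sv}^+)^l\sw$ with $\sw=a_{n-r+1}\dots a_n$.

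\textbf{Step 2: membership in $\vs_{q,n}$.} To show $F(\sx)\in B_k(\vs_{q,n})$, I must verify the admissibility condition $\overline{a_1\dots a_n}\lle y_{j+1}\dots y_{j+n}\lle a_1\dots a_n$ for every length-$n$ window of $F(\sx)$ (as a subword of some bi-infinite sequence in $\vs_{q,n}$; really I check this for all length-$n$ windows inside the word, and note $\sx$ itself came from a sequence in $\vs_{q,n}$, so the untouched prefix $x_1\dots x_i$ and the constraints at the ends are fine). The windows fall into three groups: (i) windows lying entirely within the unchanged prefix $x_1\dots x_i$ — these are inherited from $\sx\in B_k(\vs_{q,n})$; (ii) windows lying entirely within the modified suffix $\sv^-\overline{{\mathbf z}\,x_{i+n+1}\dots x_k}$ — here I note that $\overline{{\mathbf z}\,x_{i+n+1}\dots x_k}$ is the reflection of a suffix of $\sx$ minus its first $m$ symbols, hence each of its windows is the reflection of an admissible window of $\sx$, hence still admissible; the prefix $\sv^-=a_1\dots a_{m-1}(a_m-1)$ needs separate handling using primitivity of $\sv$ (Lemma~\ref{lem:23}(iii) and the primitivity of $\su$); (iii) the ``straddling'' windows that overlap both $x_1\dots x_i$ and $\sv^-\overline{{\mathbf z}\dots}$. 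For group (iii), the decisive observation is that $x_1\dots x_i \sv^- \cdots$ agrees with $x_1\dots x_i \sv \cdots = x_1\dots x_{i+m}\cdots$ in the first $i+m-1$ positions, and the string $x_1\dots x_i\su\cdots$ was admissible (it's a piece of $\sx$); so any straddling window is lexicographically $\lle$ its counterpart in $\sx$ in the part before position $i+m$, and I need to rule out that lowering $a_m$ to $a_m-1$ or reflecting ${\mathbf z}$ pushes a window below $\overline{a_1\dots a_n}$ or above $a_1\dots a_n$. This is where Lemma~\ref{lem:23}(i) — namely $a_{n-m+1}\dots a_n^-\succ\overline{a_1\dots a_m}$ — and part (ii) — $a_1\dots a_n(\overline{a_1\dots a_n}^+)^\f\prec\al(q)$, equivalently the primitivity of $\su$ with room to spare — get used to close the estimates.

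\textbf{Step 3: the index shift.} For part (b), I observe that $F(\sx)$ begins with $x_1\dots x_i\sv^-$, and $\sv^- = a_1\dots a_{m-1}(a_m-1)$ differs from $\sv=a_1\dots a_m$ only in its last symbol, so $\sv^-$ is \emph{not} a prefix of $\su$ nor of $\overline{\su}$, and moreover no occurrence of $\su$ or $\overline{\su}$ can start at any position $\le i$ in $F(\sx)$: positions $<i$ are ruled out because $i=i_\su(\sx)$ was minimal and those symbols are unchanged, while an occurrence starting exactly at $i+1$ would force $\sv^-$'s last symbol to equal $a_m$ (contradiction) or, in the reflected case, a similar contradiction. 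Next I argue no occurrence of $\su$ or $\overline{\su}$ starts in positions $i+1,\dots,i+\lceil n/2\rceil$ of $F(\sx)$: these positions fall inside the block $\sv^-\overline{{\mathbf z}}$ of length $m + n - m = n$; within $\sv^-$ (length $m$) we've handled the start; an occurrence of $\su$ starting inside $\sv^-\overline{{\mathbf z}}$ but before the halfway point would have to match $\su$ or $\overline{\su}$ against a window that is built from $\sv^-$ followed by the reflection of $a_{m+1}\dots a_n\dots$, and primitivity of $\su$ together with Lemma~\ref{lem:23}(i) forbids $\su$ or $\overline{\su}$ from appearing as an interior factor of $a_1\dots a_n$ or its reflection-concatenates before roughly the $n/2$ mark — precisely because $\su$ is primitive, it cannot overlap a reflected copy of a prefix of itself too early. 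Collecting these, the earliest possible new occurrence is at position $\ge i+1+n/2 - 1 = i + n/2$ or later, giving $i_\su(F(\sx))\ge i_\su(\sx)+n/2$.

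\textbf{Main obstacle.} The genuinely delicate part is Step~2, group (iii): controlling the straddling windows after the symbol $a_m$ has been decremented and the tail reflected. One must show neither operation violates the two-sided lexicographic constraint. The lower bound (window $\lge\overline{a_1\dots a_n}$) is the subtle direction, since decrementing can only help the upper bound but hurts the lower bound; Lemma~\ref{lem:23}(i) is exactly tailored to this, asserting $a_{n-m+1}\dots a_n^-\succ\overline{a_1\dots a_m}$, i.e.\ the decremented block still dominates its reflection-threshold. Making this rigorous for \emph{all} straddling windows simultaneously — rather than just the one anchored at position $i+1$ — requires iterating the primitivity inequalities for $\su$, $\sv$, and $a_1\dots a_r$ (Lemma~\ref{lem:23}(ii),(iii)) along the periodic-like structure $\sv(\overline{\sv}^+)^l\sw$, and bookkeeping the $l+1$ shifted copies carefully. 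I expect this to be the longest and most technical portion of the argument; the index-shift bound (Step~3) is comparatively short once the structural facts about where $\su$ can reoccur are in hand.
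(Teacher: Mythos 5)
Your plan follows the same route as the paper: reduce by the symmetry $F(\overline{\sx})=\overline{F(\sx)}$ to the case $x_{i+1}\dots x_{i+n}=\su$, dispose of the trivial case, observe that since positions $i+m+1,\dots,k$ of $F(\sx)$ equal $\overline{x_{i+m+1}\dots x_k}$ every length-$n$ window starting past $i+m$ inherits the weak two-sided constraint by reflection, and then verify the (now strict) constraint on the remaining windows using primitivity and Lemma~\ref{lem:23}. That is the correct strategy, and you correctly identify the lower bound as the delicate direction. But the verification itself is the entire content of the lemma, and your proposal explicitly defers it. In the paper this takes the form of a four-case analysis of the lower bound $y_{j+1}\dots y_{j+n}\succ\overline{\su}$, split by the value of $j$ relative to $i+m-n$, $i-m$, $i$, and $i+n/2$, so as to align each window against the block structure $x_1\dots x_i\,(\sv^-)^{l+1}\,\overline{\sw x_{i+n+1}\dots x_k}$ of $F(\sx)$. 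Each case uses a different combination of \eqref{eq:kong-1}, primitivity of $\sv$, primitivity of $\su$, and Lemma~\ref{lem:23}(i); the standing hypothesis $l>0$ enters specifically in the final case ($i\le j<i+n/2$) to guarantee the window $[j+1,j+m]$ stays inside the cyclic block $(\sv^-)^{l+1}\overline{\sw}$. None of these cases or their boundaries appear in your sketch, and your partition of windows around position $i$ rather than around position $i+m$ (the location of the decrement, where the reflected tail begins) suggests the actual split points have not yet been worked out.

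There is also a concrete slip in your Step~3. You claim that no occurrence of $\su$ or $\overline{\su}$ can begin at a position $\le i$ in $F(\sx)$ ``because $i=i_\su(\sx)$ was minimal and those symbols are unchanged.'' But a window $y_{j+1}\dots y_{j+n}$ with $j<i$ extends past position $i+m$ and therefore overlaps the modified suffix, so minimality of $i$ by itself says nothing about it. What actually rules out such occurrences is the strict inequality $\overline{\su}\prec y_{j+1}\dots y_{j+n}\prec\su$ for all $j<i+n/2$, which is precisely the output of the four-case analysis: the upper strict inequality for $j<i+m$ comes cheaply because the decrement at position $i+m$ strictly lowers the window, while the lower strict inequality requires \eqref{eq:kong-1} and Lemma~\ref{lem:23}(i). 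So you have the right approach and the right ingredients, but the technical core is absent, and the one step for which you do supply a substitute argument is the one that contains a gap.
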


  \begin{proof}
	Let $i:=i_\su(\sx)$. By symmetry we may assume $x_{i+1}\ldots x_{i+n}=\su$, so 
	\[
	\sx=x_1\ldots x_i\sv(\overline{\sv}^+)^l\sw x_{i+n+1}\ldots x_k.
	\]
  Let $F(\sx)=y_1\ldots y_k$. By Definition \ref{def:general-F} it follows that 
  \begin{equation} \label{eq:yixi}
  y_1\ldots y_k=x_1\ldots x_i(\sv^-)^{l+1}\overline{\sw x_{i+n+1}\ldots x_k}.
  \end{equation}
   Since the entire word $x_{i+m+1}\ldots x_k$ is being reflected by $F$, no word strictly greater than $\su$ or strictly smaller than $\overline{\su}$ can occur in $y_{i+m+1}\ldots y_k$. To prove the lemma, therefore, it is necessary and sufficient to show that for each $j<i+(n/2)$,
  \[
  \overline{\su}\prec y_{j+1}\ldots y_{j+n}\prec \su.
  \]
  
 Note by \eqref{eq:yixi} that $y_1\ldots y_{i+m}=x_1\ldots x_{i+m}^-$.  By the minimality of $i$ it follows that 
  $y_{j+1}\ldots y_{j+n}=x_{j+1}\ldots x_{j+n}\prec \su$  {for all }$0\le j<i+m-n.$ 
  Furthermore, for $i+m-n\le j<i+m$ we have $y_{j+1}\ldots y_{j+n}\prec x_{j+1}\ldots x_{j+n}\lle \su$. So, $y_{j+1}\ldots y_{j+n}\prec\su$ for all $j<i+m$. And for $i+m\leq j<i+(n/2)$, we have $j+m<i+n$ since $n>2m$. Then the same inequality follows since $\sv$ is primitive.

  Proving the other inequality,
  \begin{equation}\label{eq:kong-2}
  y_{j+1}\ldots y_{j+n}\succ\overline{\su}\quad\textrm{for all }j<i+\frac{n}{2},
  \end{equation}
  is more involved. First, by the minimality of $i$ it follows that 
  \[
  y_{j+1}\ldots y_{j+n}=x_{j+1}\ldots x_{j+n}\succ \overline{\su}\quad\textrm{for all }0\le j<i+m-n.
  \] 
So it remains to prove (\ref{eq:kong-2}) for $j\geq i+m-n$. We consider {four} cases (see Figure \ref{Fig1}):

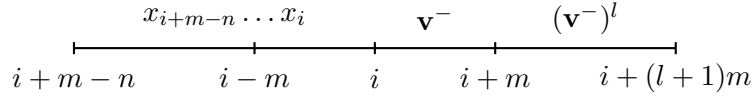
\begin{figure}[h!]
\begin{center}
\begin{tikzpicture}[ 
    scale=8,
    axis/.style={very thick, ->},
    important line/.style={thick},
    dashed line/.style={dashed, thin},
    pile/.style={thick, ->, >=stealth', shorten <=2pt, shorten
    >=2pt},
    every node/.style={color=black}
    ]
 
 
    \draw[important line]  (0,0)--(1,0);
                \draw[important line] (0,-0.01)--(0, 0.01);
                 \draw[important line] (0.3,-0.01)--(0.3, 0.01);
                           \draw[important line] (0.5,-0.01)--(0.5, 0.01);  
                                     \draw[important line] (0.7,-0.01)--(0.7, 0.01);  
                                               \draw[important line] (1,-0.01)--(1, 0.01);  
      \node[] at (0,-0.05){$i+m-n$};
        \node[] at (0.3, -0.05){$i-m$};
         \node[] at (0.5, -0.05){$i$};
          \node[] at (0.7, -0.05){$i+m$};
           \node[] at (1, -0.05){$i+(l+1)m$};
        
           \node[] at (0.25, 0.05){$x_{i+m-n}\ldots x_i$};
                  \node[] at (0.6, 0.05){$\sv^-$};
             \node[] at (0.85, 0.05){$(\sv^-)^l$};
            
   \end{tikzpicture} 
\end{center}
\caption{The presentation of $y_{i+m-n}\ldots y_{i+(l+1)m}=x_{i+m-n}\ldots x_i(\sv^-)^{l+1}$.}
\label{Fig1}
\end{figure}

\begin{itemize}
 \item[(I).] $j=i+m-n$. Then $y_{j+1}\ldots y_{j+n}=x_{j+1}\ldots x_{i+m}^-=x_{j+1}\ldots x_{i}a_1\ldots a_m^-$. Note that $ y_{j+1}\ldots y_i=x_{j+1}\ldots x_{i}\lge \overline{a_1\ldots a_{i-j}}=\overline{a_1\ldots a_{n-m}}.
$
 Furthermore, by Lemma \ref{lem:23} (i) it follows that 
 \begin{equation*}
 y_{i+1}\ldots y_{j+n}=a_1\ldots a_m^-\succ\overline{a_{n-m+1}\ldots a_n}.
 \end{equation*}
 This proves (\ref{eq:kong-2}) for $j=i+m-n$.
  
  \item[(II).] $i+m-n<j\le i-m$. Then $n>2m$. 
	Write $j+n=i+t m+s$ with $t\in\{1,\dots,l\}$ and $s\in\set{1,\ldots, m}$. Then it follows from (\ref{eq:yixi}) that   
	  \begin{align}
  y_{j+1}\ldots y_i&=x_{j+1}\ldots x_i, \notag \\
	y_{i+1}\ldots y_{i+tm}&=(a_1\ldots a_m^-)^t, \label{eq:repeating-part}
  \end{align}
	and
  \begin{equation*}
  y_{i+tm+1}\ldots y_{j+n}=\begin{cases}
	a_1\ldots a_s & \textrm{if }s<m,\\
  a_1\ldots a_m^- &\textrm{if }s=m.
	\end{cases}
  \end{equation*}

 Note that $y_{j+1}\ldots y_i=x_{j+1}\ldots x_i\lge \overline{a_1\ldots a_{i-j}}$.  Observe that  $\su=a_1\ldots a_n=a_1\ldots a_m(\overline{a_1\ldots a_m}^+)^l\sw$ and $m\le i-j<n-m$. Then by \eqref{eq:kong-1} $\overline{a_{i-j+1}\ldots a_{n-s}}$ is a subword of $(a_1\ldots a_m^-)^l$, so using \eqref{eq:repeating-part} and the primitivity of $a_1\ldots a_m$ it follows that 
 \begin{equation}\label{eq:hx-22}
y_{j+1}\ldots y_{i+tm}=x_{j+1}\ldots x_i(a_1\ldots a_m^-)^t\lge \overline{a_{1}\ldots a_{i-j+tm}}=\overline{a_1\ldots a_{n-s}}.
 \end{equation}
 Since by Lemma \ref{lem:23} (ii) $\su=a_1\ldots a_n$ is primitive, we also have that when $s<m$,
 \[ y_{i+tm+1}\ldots y_{j+n}= a_1\ldots a_s\succ \overline{a_{n-s+1}\ldots a_n}.\]
 On the other hand, when $s=m$ we have by Lemma \ref{lem:23} (i) that 
 \[
 y_{i+tm+1}\ldots y_{j+n}=a_1\ldots a_m^-\succ  \overline{a_{n-m+1}\ldots a_n}. 
 \]
Combining this with (\ref{eq:hx-22}) gives $y_{j+1}\ldots y_{j+n}\succ \overline{\su}$. This proves (\ref{eq:kong-2}) for $i+m-n<j\le i-m$.
 
  \item[(III).] $i-m<j<i$. Then $y_{j+1}\ldots y_i=x_{j+1}\ldots x_i\lge \overline{a_1\ldots a_{i-j}}$. Note that $i-j<m$. Then by the primitivity of $a_1\ldots a_m$ it follows that 
  \[
  y_{i+1}\ldots y_{j+m}=a_1\ldots a_{j+m-i}\succ\overline{a_{i-j+1}\ldots a_m}.
  \]
  This proves (\ref{eq:kong-2}) for $i-m<j<i$.
 
   \item[(IV).] {$i\le j<i+(n/2)$.  Recall that we are assuming $l>0$. Then $j+m<i+n$.  By (\ref{eq:yixi}) we have  
   \[y_{j+1}\ldots y_{j+m}=a_{t+1}\ldots a_m^- a_1\ldots a_{t}\quad\textrm{for some }0\le t<m.\]
     By the primitivity of $a_1\ldots a_m$ it follows that 
  $y_{j+1}\ldots y_{j+m}\succ\overline{a_1\ldots a_m}$. 
  Hence (\ref{eq:kong-2}) holds for $i\le j<i+(n/2)$.}

    \end{itemize}
  We have now shown (\ref{eq:kong-2}) for all $j<i+(n/2)$. The proof is complete.
  \end{proof}

  As a result of Lemma \ref{lem:24}, for some large enough $j$ (with $j<k$) we have $F^k(\sx)=\dots=F^{j+1}(\sx)=F^j(\sx)$. 
	
	\begin{definition} \label{def:fnk}
	We define  
	\[
	f_{n,k}(\sx):=F^k(\sx), \qquad \sx\in B_k(\vs_{q,n}).
	\]
	\end{definition}

Observe that $F(f_{n,k}(\sx))=f_{n,k}(\sx)$, so $f_{n,k}(\sx)$ does not contain the word $\su$ or $\overline{\su}$. Hence, $f_{n,k}$ maps $B_k(\vs_{q,n})$ into $B_k(\us_{q,n})$. 
  
\begin{proposition}\label{prop:left-continuity-positive}
Let $q\in\bb^L$. If $l(m)>0$ for infinitely many $m\in\mathcal N(q)$, then 
\[
\lim_{n\ra\f}h(\vs_{q,n})=\lim_{n\ra\f}h(\us_{q,n}).
\]
\end{proposition}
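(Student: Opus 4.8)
\textbf{Proof plan for Proposition \ref{prop:left-continuity-positive}.}
The strategy is to turn the combinatorial map $f_{n,k}$ into a quantitative comparison of $\#B_k(\vs_{q,n})$ and $\#B_k(\us_{q,n})$. First I would fix $q\in\bb^L$, take $m\in\mathcal N(q)$ with $l(m)>0$, and set $n=n(m)$ as in \eqref{eq:kong-1}, together with $\su,\sv$ as in the construction. By Lemma \ref{lem:24}, each application of $F=F_{\su,\sv}$ strictly advances $i_\su$ by at least $n/2$; hence after at most $\lceil 2k/n\rceil$ iterations the value becomes $\infty$, which is exactly the statement that $f_{n,k}=F^k$ lands in $B_k(\us_{q,n})$. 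So the map is well defined and the remaining task is to bound its multiplicity.

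The key point is to count preimages of a single application of $F$. Given a word $\sy=y_1\dots y_k\in F(B_k(\vs_{q,n}))$, a preimage $\sx$ of $\sy$ under $F$ with $i_\su(\sx)=i$ is uniquely determined by $\sy$ and $i$: in the case $x_{i+1}\dots x_{i+n}=\su$, Definition \ref{def:general-F} shows $\sx=y_1\dots y_i\,\sv\,\mathbf z\,\overline{y_{i+n+1}\dots y_k}$ (and symmetrically in the reflected case, which is determined by whether $y_{i+1}\dots$ begins with $\sv^-$ or $\overline{\sv}^+$). Thus the number of $F$-preimages of $\sy$ is at most $2(k-n+1)\le 2k$ — at most $k-n+1$ choices of the location $i$, times a factor $2$ for the $\su$ versus $\overline{\su}$ alternative. (In fact one should note $i$ must equal $i_\su(\sx)$, so for a given $i$ there is at most one preimage; and the locations that can possibly be $i_\su(\sx)$ for a valid preimage are restricted, but the crude bound $2k$ suffices.) Since $f_{n,k}=F^k$ but effectively $F^{\lceil 2k/n\rceil}$, composing the per-step bounds gives
\[
\#B_k(\vs_{q,n})\le (2k)^{\lceil 2k/n\rceil}\,\#B_k(\us_{q,n}).
\]
Taking $\log$, dividing by $k$, and letting $k\to\infty$ yields
\[
h(\vs_{q,n})\le h(\us_{q,n})+\limsup_{k\to\infty}\frac{\lceil 2k/n\rceil\log(2k)}{k}=h(\us_{q,n})+0,
\]
because $\log(2k)/k\to 0$ dominates the factor $\lceil 2k/n\rceil/k\le (2/n)+o(1)$. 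Hence $h(\vs_{q,n})\le h(\us_{q,n})$, and since $\us_{q,n}\subseteq\vs_q\subseteq\vs_{q,n}$ forces $h(\us_{q,n})\le h(\vs_{q,n})$, we get $h(\vs_{q,n})=h(\us_{q,n})$ for every $n=n(m)$ with $m\in\mathcal N(q)$, $l(m)>0$.

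Finally I would pass to the limit. Because $l(m)>0$ for infinitely many $m\in\mathcal N(q)$, the numbers $n(m)$ arising this way are unbounded, so they form an infinite subsequence along which $h(\vs_{q,n(m)})=h(\us_{q,n(m)})$. Since $(h(\us_{q,n}))$ is nondecreasing and $(h(\vs_{q,n}))$ is nonincreasing (both sequences being monotone by the monotonicity of the set sequences $(\us_{q,n})$, $(\vs_{q,n})$), both limits exist, and equality along one infinite subsequence forces equality of the two limits:
\[
\lim_{n\to\infty}h(\vs_{q,n})=\lim_{n\to\infty}h(\us_{q,n}).
\]
The main obstacle is the multiplicity count: one must verify carefully that a word $\sy$ in the image, together with a candidate location $i$, determines its $F$-preimage uniquely and that only the genuine first-occurrence index $i_\su(\sx)$ can be used — this is where Definition \ref{def:general-F} and the structure of $\su=\sv(\overline{\sv}^+)^l\sw$ must be invoked to ensure that the reflected tail and the inserted block $\sv$ (resp.\ $\sv^-$) can be unambiguously recovered. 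Everything else is bookkeeping with the entropy limit and the monotonicity already recorded in the excerpt.
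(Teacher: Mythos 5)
Your overall architecture matches the paper's (iterate $F=F_{\su,\sv}$, bound the multiplicity of $f_{n,k}=F^k$, deduce an entropy inequality, and finish via monotonicity of the two sequences), but the multiplicity estimate contains a genuine gap that sinks the argument. You bound each single application of $F$ by ``at most $2k$-to-one'' and then compose $\lceil 2k/n\rceil$ applications to get $f_{n,k}$ at most $(2k)^{\lceil 2k/n\rceil}$-to-one. That bound is too weak: for fixed $n$,
\[
\frac{1}{k}\log\Bigl((2k)^{\lceil 2k/n\rceil}\Bigr)
=\frac{\lceil 2k/n\rceil}{k}\,\log(2k)
\;\sim\;\frac{2}{n}\log(2k)\;\longrightarrow\;\infty
\]
as $k\to\infty$, not $0$ as you claim. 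The number of iterations of $F$ needed scales linearly in $k$, and each iteration contributes a factor polynomial in $k$; the composition is therefore super-exponential in $k$, and the resulting entropy correction diverges. Incidentally, your argument, if correct, would give $h(\vs_{q,n})=h(\us_{q,n})$ for every individual $n$ arising from $l(m)>0$; that is too strong and cannot be expected, since $\us_{q,n}\subsetneq\vs_{q,n}$ are genuinely different subshifts of finite type whose entropies only coincide in the limit.

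The paper avoids this by bounding the multiplicity of the \emph{composed} map $f_{n,k}$ directly, block by block rather than iteration by iteration. It partitions a target word $\sy\in B_k(\us_{q,n})$ into $k/N$ consecutive blocks of length $N=[n/2]$, and observes that within any single block the action of $f_{n,k}$ is of a very restricted form: the block is either unchanged, globally reflected, or experiences exactly one replacement of $\sv$ by $\sv^-$ (or its reflection) starting at one of at most $N-m$ positions, after which the remainder of the block is reflected. This gives at most $2N$ preimage possibilities per block — a quantity that is constant in $k$ — and hence $\#B_k(\vs_{q,n})\le (2N)^{k/N}\#B_k(\us_{q,n})$, yielding $h(\vs_{q,n})\le h(\us_{q,n})+\tfrac{\log 2N}{N}$, with the correction term tending to $0$ as $n\to\infty$ (not as $k\to\infty$). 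The crucial ingredient making the per-block count constant is precisely the content of Lemma~\ref{lem:24}: each application of $F$ advances $i_\su$ by at least $n/2\ge N$, so no block of length $N$ is modified by more than one iteration. Your write-up invokes Lemma~\ref{lem:24} to show $f_{n,k}$ terminates, but not to control the per-block multiplicity, and that is where the missing idea lies.
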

  
\begin{proof}
Let $\al(q)=(a_i)$ and $m\in\mathcal N(q)$ such that $l=l(m)>0$.  Write $n=m(l+1)+r$ as in (\ref{eq:kong-1}), and $\su=a_1\ldots a_n=\sv(\overline{\sv}^+)^l\sw$. For $k>n$ we take an arbitrary word $\sy:=y_1\dots y_k$ in $B_k(\us_{q,n})$, and a subword $y_{i+1}\dots y_{i+N}$ of length $N:=[n/2]$. For convenience, and without loss of generality, we assume that $k$ is a multiple of $N$.
Let us consider the possible subwords $x_{i+1}\dots x_{i+N}$ of words $\sx=x_1\dots x_k$ with $f_{n,k}(\sx)=\sy$. Two such words are of course $y_{i+1}\dots y_{i+N}$ and $\overline{y_{i+1}\dots y_{i+N}}$. However, it is also possible that $x_{i+1}\dots x_{i+N}$ contains an occurrence of $\sv$ (or $\overline{\sv}$) that is the beginning of an occurrence of $\su$ (or $\overline{\su}$) and is therefore replaced by the map $F$ with $\sv^-$ (or $\overline{\sv}^+$). {Note that later iterations of $F$ do not change this block, in view of Lemma \ref{lem:24}.}  Since there are at most $N-m$ possible starting points for $\sv$ (or $\overline{\sv}$) and $m\geq 1$, it follows that there are at most $2N$ possible subwords $x_{i+1}\dots x_{i+N}$ which get mapped by $f_{n,k}$ to $y_{i+1}\dots y_{i+N}$.   

Applying this argument to each of the $k/N$ blocks $y_1\dots y_N, y_{N+1}\dots y_{2N},\dots,y_{k-N+1}\dots y_k$, we conclude that there are at most
$(2N)^{k/N}$ different words $\sx\in B_k(\vs_{q,n})$ with $f_{n,k}(\sx)=\sy$. Thus, the map $f_{n,k}$ is at most $(2N)^{k/N}$-to-one. It follows that
\[
\#B_k(\vs_{q,n})\leq (2N)^{k/N} \#B_k(\us_{q,n}),
\]
and so
\begin{equation*}
\frac{\log\#B_k(\vs_{q,n})}{k}\leq \frac{\log\#B_k(\us_{q,n})}{k}+\frac{\log 2N}{N}.
\end{equation*}
Letting $k\to\infty$ we get
\begin{equation}
h(\vs_{q,n})\leq h(\us_{q,n})+\frac{\log 2N}{N}\leq h(\us_{q,n})+\frac{\log n}{[n/2]}.
\label{eq:entropy-comparison}
\end{equation}
Hence, if there are infinitely many $m\in\mathcal{N}(q)$ with $l(m)>0$, then there are also infinitely many $n\in\mathcal{N}(q)$ such that \eqref{eq:entropy-comparison} holds. We can then let $n\to\infty$ along a suitable subsequence in $\mathcal N(q)$, and conclude that
\begin{equation*}
\lim_{n\to\infty} h(\vs_{q,n})\leq \lim_{n\to\infty} h(\us_{q,n}),
\label{eq:limit-equality}
\end{equation*}
using the fact that $h(\us_{q,n})$ is nondecreasing in $n$, and $h(\vs_{q,n})$ is nonincreasing in $n$.
\end{proof}

\subsection{Construction of $f_{n,k}$: the second case.}

Next, we assume that $q\in\bb^L$ and $l(m)=0$ for all but finitely many $m\in\mathcal N(q)$. 
Let 
\[
m_1:=\min\set{m\in\mathcal N(q): l(m')=0\textrm{ for all }m'\in\mathcal N(q)\textrm{ with }m'\ge m}.
\]
Note that $m_1\in\mathcal N(q)$.  Write $\sv_1:=a_1\ldots a_{m_1}$. Then $\sv_1$ is primitive, and $\sv_1(\overline{\sv_1}^+)^\f\prec \al(q)$. So, since $l(m_1)=0$, in view of (\ref{eq:kong-1}) there exists a word $\sw_1$ of shortest length $r_1:=|\sw_1|\ge 1$ such that 
$
\sv_1\sw_1 0^\f\succ\sv_1(\overline{\sv_1}^+)^\f.
$
Note that $1\le r_1=|\sw_1|\le |\sv_1|=m_1$. Define
$
\sv_2:=\sv_1\sw_1.
$
Then by Lemma \ref{lem:23} (ii) with $m=m_1, l=0$ and $r=r_1$ it follows that $m_2:=|\sv_2|\in\mathcal N(q)\cap[m_1, \f)$. By Lemma \ref{lem:22} this implies that $\sv_2$ is primitive and $\sv_{2}(\overline{\sv_2}^+)^\f\prec \al(q)$. 

Repeating the above argument we construct a sequence of words $(\sv_i)$ such that for each $i\ge 1$ the word $\sv_i$ is primitive and $\sv_i(\overline{\sv_i}^+)^\f\prec (a_i)$. Furthermore, for each $i\ge 1$,
\[
\sv_{i+1}=\sv_i\sw_i\quad \textrm{with}\quad  1\leq r_i:=|\sw_i|\le|\sv_i|=:m_i,
\]
and
\begin{equation}
\sw_i 0^\f \succ (\overline{\sv_i}^+)^\f.
\label{eq:w-is-big}
\end{equation}
Therefore, 
\begin{equation}\label{eq:alpha-sw}
\al(q)=(a_i)=\sv_1\sw_1\sw_2\sw_3\ldots=\sv_i\sw_i\sw_{i+1}\sw_{i+2}\ldots, \qquad i\geq 1.
\end{equation}

Clearly $|\sw_i|\ge 1$ for all $i\ge 1$. 
Hence there are infinitely many integers $i$ such that $|\sw_i|\le |\sw_{i+1}|$. Observe also by Lemma \ref{lem:23} (ii) that $\sv_{i+1}=\sv_i\sw_i$ is primitive. This implies that $\sw_i^-\lge \overline{a_1\ldots a_{r_i}}$ for each $i\in\N$. It follows that one of the following cases must hold:
\begin{enumerate}[(i)]
\item $|\sw_i|<|\sw_{i+1}|$ for infinitely many $i$;  or
\item $\sw_i\succ \overline{a_1\dots a_{r_i}}^+$ for infinitely many $i$; or
\item there is $s\in\N$ such that $\al(q)=\sv_s \sw_s^\f=a_1\dots a_{m_s}(\overline{a_1\ldots a_{r_s}}^+)^\f$.  
\end{enumerate}
We consider the first two cases together; the third case, however, requires a different approach.

\medskip

{\noindent\bf Case A:} $|\sw_i|<|\sw_{i+1}|$ for infinitely many $i$, or $\sw_i\succ \overline{a_1\dots a_{r_i}}^+$ for infinitely many $i$.

\medskip

Fix an integer $s$ such that $|\sw_s|<|\sw_{s+1}|$ or $\sw_s\succ \overline{a_1\dots a_{r_s}}^+$. Set 
\[
n:=m_{s+2}=m_{s+1}+r_{s+1},
\] 
and write
 \[
 \su:=a_1\ldots a_n=\sv_s\sw_s\sw_{s+1}=\sv_{s+1}\sw_{s+1}, \qquad \sv:=\sv_{s+1}.
 \]
 Fix an integer $k>n$. With $\su$ and $\sv$ as above, set $F_A:=F_{\su,\sv}$ (see Definition \ref{def:general-F}). We first show that $F_A$ maps $B_k(\vs_{q,n})$ into itself.
 
\begin{lemma}\label{lem:F-A}
  For any $\sx\in B_k(\vs_{q,n})$ we have $F_A(\sx)\in B_k(\vs_{q,n})$, and  
\[
i_\su(F_A(\sx))\geq i_\su(\sx)+m_{s+1}\geq i_\su(\sx)+\frac{n}{2}.
\]
 \end{lemma}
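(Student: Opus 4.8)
The plan is to mimic closely the structure of the proof of Lemma \ref{lem:24}, which handled the analogous statement in the first case ($l>0$). Write $i:=i_\su(\sx)$, and by the reflection symmetry $F_A(\overline{\sx})=\overline{F_A(\sx)}$ assume $x_{i+1}\dots x_{i+n}=\su$. Then $\sx=x_1\dots x_i\,\sv\sw_{s+1}\,x_{i+n+1}\dots x_k$ with $\sv=\sv_{s+1}$ and $\su=\sv\sw_{s+1}$, and by Definition \ref{def:general-F} we have $F_A(\sx)=x_1\dots x_i\,\sv^-\,\overline{\sw_{s+1}x_{i+n+1}\dots x_k}$. Because the entire tail $x_{i+m_{s+1}+1}\dots x_k$ is reflected, no word strictly exceeding $\su$ or strictly below $\overline{\su}$ can appear starting at or after position $i+m_{s+1}$; so it suffices to verify $\overline{\su}\prec y_{j+1}\dots y_{j+n}\prec\su$ for every $j< i+m_{s+1}$. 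This also automatically gives the displayed bound $i_\su(F_A(\sx))\ge i_\su(\sx)+m_{s+1}\ge i_\su(\sx)+n/2$, the last inequality because $n=m_{s+1}+r_{s+1}$ with $r_{s+1}\le m_{s+1}$.

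The upper inequality $y_{j+1}\dots y_{j+n}\prec\su$ is routine: for $j<i+m_{s+1}-n$ the block is unchanged and minimality of $i$ applies; for $i+m_{s+1}-n\le j<i+m_{s+1}$ we have $y_{j+1}\dots y_{j+n}\prec x_{j+1}\dots x_{j+n}\preceq\su$ since a digit has been strictly decreased at position $i+m_{s+1}$ and then primitivity of $\sv$ (equivalently of $\sv_{s+1}$) rules out any later violation. The substantial part is the lower inequality $y_{j+1}\dots y_{j+n}\succ\overline{\su}$ for $j\ge i+m_{s+1}-n$. Here I expect to split into cases exactly as in Lemma \ref{lem:24}: (I) $j=i+m_{s+1}-n$; (II) $i+m_{s+1}-n<j\le i-m_{s+1}$ (nonempty only if $n>2m_{s+1}$, i.e.\ $r_{s+1}>m_{s+1}$, which cannot occur, so this case is likely vacuous here — worth a remark); (III) $i-m_{s+1}<j<i$; and (IV) $i\le j< i+m_{s+1}$. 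In cases (I) and (III) one uses $y_{j+1}\dots y_i=x_{j+1}\dots x_i\succeq\overline{a_1\dots a_{i-j}}$ (from $\sx\in B_k(\vs_{q,n})$) together with the primitivity of $\sv_{s+1}$ and of $\su=a_1\dots a_n$ (Lemma \ref{lem:23}(ii)) to handle the reflected part $\sv^-=a_1\dots a_{m_{s+1}}^-$ or its appropriate prefix/suffix; in case (IV), $y_{j+1}\dots y_{j+m_{s+1}}$ is a cyclic shift of $a_1\dots a_{m_{s+1}}^-$, and primitivity of $\sv_{s+1}$ gives $y_{j+1}\dots y_{j+m_{s+1}}\succ\overline{a_1\dots a_{m_{s+1}}}\succeq\overline{\su}\,$(prefix).

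The one genuinely new ingredient, and the main obstacle, is the analogue of Lemma \ref{lem:23}(i), namely the inequality $a_{n-m_{s+1}+1}\dots a_n^-\succ\overline{a_1\dots a_{m_{s+1}}}$ needed to start case (I) — equivalently $\sw_{s+1}^-\succ\overline{a_1\dots a_{r_{s+1}}}$ fails to be strict in general, so one must instead exploit the defining hypothesis of Case A. This is precisely why Case A was singled out: either $|\sw_s|<|\sw_{s+1}|$, or $\sw_s\succ\overline{a_1\dots a_{r_s}}^+$, and in the latter subcase together with \eqref{eq:w-is-big} and \eqref{eq:alpha-sw} one upgrades the weak inequality $\sw_{s+1}^-\succeq\overline{a_1\dots a_{r_{s+1}}}$ (valid since $\sv_{s+2}$ is primitive) to what is required for the argument. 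I would isolate this comparison as a short preliminary claim, prove it from the Case A hypothesis plus \eqref{eq:w-is-big}, \eqref{eq:alpha-sw} and primitivity of the $\sv_i$, and then the four-case verification goes through verbatim as above, completing the proof.
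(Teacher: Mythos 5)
The proposal has a genuine gap, and it stems from your treatment of the final case together with a misdiagnosis of where the Case~A hypothesis enters.

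Your case (IV), $i\le j< i+m_{s+1}$, asserts that $y_{j+1}\dots y_{j+m_{s+1}}$ is a cyclic shift of $a_1\dots a_{m_{s+1}}^-$ and concludes from primitivity of $\sv_{s+1}$. This is false in general. In Lemma~\ref{lem:24} the cyclic-shift argument works because $l\geq 1$, so $(\sv^-)^{l+1}$ places at least two consecutive copies of $\sv^-$ after position $i$; the window $[j+1,j+m]$ stays inside the repeated region. Here, however, there is only a single copy of $\sv_{s+1}^-$ occupying positions $i+1,\dots,i+m_{s+1}$, and for $j>i$ the window $[j+1,j+m_{s+1}]$ spills past position $i+m_{s+1}$ into $\overline{\sw_{s+1}\,x_{i+n+1}\dots x_k}$. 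That reflected tail is not a prefix of $a_1\dots a_{m_{s+1}}^-$: in fact $\overline{\sw_{s+1}}=a_1\dots a_{r_{s+1}-1}\,\overline{a_n}$ with $\overline{a_n}<a_{r_{s+1}}$, and beyond position $i+n$ the digits $\overline{x_{i+n+1}\dots}$ are arbitrary. So for $j-i\geq r_{s+1}$ the block is emphatically not a cyclic shift, and even for smaller $j-i$ the last digit breaks the claim. The correct treatment splits your (IV) further: for $i\le j<i+m_s$ the window $y_{j+1}\dots y_{i+m_s}$ lies entirely within the unchanged prefix $\sv_s$ of $\sv_{s+1}^-$, and primitivity of $\sv_s$ gives a strict inequality immediately; for $i+m_s\le j<i+m_{s+1}$ the window starts inside $\sw_s^-$, and this is the one place a new argument is required.

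You also point your ``main obstacle'' at case (I), claiming the analogue of Lemma~\ref{lem:23}(i) requires the Case~A hypothesis. It does not: Lemma~\ref{lem:23}(i) was proved unconditionally (including when $l=0$), and with $m=m_{s+1}$, $n=n(m_{s+1})=m_{s+2}$ it applies directly, settling case (I). The Case~A hypothesis is instead what saves the subcase $i+m_s\le j<i+m_{s+1}$. There $y_{j+1}\dots y_{i+m_{s+1}}$ is a suffix of $\sw_s^-$ of length $r_s-t$ (where $t=j-i-m_s$). From $\sw_s\succeq\overline{a_1\dots a_{r_s}}^+$ and primitivity of $a_1\dots a_{r_s}$ one gets $y_{j+1}\dots y_{i+m_{s+1}}\succeq\overline{a_{t+1}\dots a_{r_s}}\succeq\overline{a_1\dots a_{r_s-t}}$, but this can be an equality chain. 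If $\sw_s\succ\overline{a_1\dots a_{r_s}}^+$ the first step is strict and we are done. Otherwise Case~A forces $r_{s+1}>r_s$, and one continues into $\overline{\sw_{s+1}}$: the next $r_{s+1}-r_s+t$ digits are $a_1\dots a_{r_{s+1}-r_s+t}$, and primitivity of $a_1\dots a_{r_{s+1}}$ gives $a_1\dots a_{r_{s+1}-r_s+t}\succ\overline{a_{r_s-t+1}\dots a_{r_{s+1}}}$, which closes the comparison against $\overline{\su}$. Your plan to ``prove $\sw_{s+1}^-\succ\overline{a_1\dots a_{r_{s+1}}}$ strictly from the Case~A hypothesis'' does not work: Case~A constrains $\sw_s$ and the lengths $r_s,r_{s+1}$, not $\sw_{s+1}$ directly, and indeed $\sw_{s+1}^-=\overline{a_1\dots a_{r_{s+1}}}$ is possible even under Case~A. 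You need the two-stage suffix/prefix comparison described above, not a strengthening of case (I).
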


 \begin{proof}
 The proof is similar to that of Lemma \ref{lem:24}.
Let $i:=i_\su(\sx)$. By symmetry we may assume that $x_{i+1}\ldots x_{i+n}=\su$. Then 
$
 \sx= x_1\ldots x_i\sv_{s+1}\sw_{s+1}x_{i+n+1}\ldots x_k.
$
 Write $F_A(\sx)=y_1\ldots y_k$. By Definition \ref{def:general-F} we have 
 \begin{align} 
\begin{split} 
y_1\ldots y_k &=x_1\ldots x_i\sv_{s+1}^-\overline{\sw_{s+1}x_{i+n+1}\ldots x_k}\\
&=x_1\ldots x_i\sv_{s}\sw_s^-\overline{\sw_{s+1}x_{i+n+1}\ldots x_k}.
\end{split} 
 \label{eq:yixi-1}
\end{align}
 Since the entire block $x_{i+m_{s+1}+1}\ldots x_k$ is being reflected by $F_A$, it suffices to show that for each $0\le j<i+m_{s+1}$ we have $\overline{\su}\prec y_{j+1}\ldots y_{j+n}\prec \su$.
  On one hand, by (\ref{eq:yixi-1})  we have $y_1\ldots y_{i+m_{s+1}}=x_1\ldots x_{i+m_{s+1}}^-$. Then 
  using the minimality of $i$ it follows that $y_{j+1}\ldots y_{j+n}\prec \su$ for all $j<i+m_{s+1}$. So, it remains to prove 
  \begin{equation} \label{eq:08-1}
  y_{j+1}\ldots y_{j+n}\succ \overline{\su}\quad\textrm{for all }0\le j<i+m_{s+1}.
  \end{equation}
  
  First, the minimality of $i$ implies that (\ref{eq:08-1}) holds for all $j<i+m_{s+1}-n=i-r_{s+1}$. The verification of (\ref{eq:08-1}) for $i-r_{s+1}\le j<i+m_{s+1}$ is split into the following {four} cases (see Figure \ref{Fig2}).
  
\begin{figure}[h!]
\begin{center}
\begin{tikzpicture}[ 
    scale=8,
    axis/.style={very thick, ->},
    important line/.style={thick},
    dashed line/.style={dashed, thin},
    pile/.style={thick, ->, >=stealth', shorten <=2pt, shorten
    >=2pt},
    every node/.style={color=black}
    ]
 
 
    \draw[important line]  (0,0)--(1,0);
                \draw[important line] (0,-0.01)--(0, 0.01);
                 \draw[important line] (0.3,-0.01)--(0.3, 0.01);
                           \draw[important line] (0.5,-0.01)--(0.5, 0.01);  
                                     \draw[important line] (0.7,-0.01)--(0.7, 0.01);  
                                               \draw[important line] (1,-0.01)--(1, 0.01);  
      \node[] at (0,-0.05){$i-r_{s+1}$};
        \node[] at (0.3, -0.05){$i$};
         \node[] at (0.5, -0.05){$i+m_s$};
          \node[] at (0.7, -0.05){$i+m_{s+1}$};
           \node[] at (1, -0.05){$i+n$};
        
           \node[] at (0.15, 0.05){$x_{i-r_{s+1}}\ldots x_i$};
                  \node[] at (0.4, 0.05){$\sv_s$};
                   \node[] at (0.6, 0.05){$\sw_s^-$};
             \node[] at (0.85, 0.05){$\overline{\sw_{s+1}}$};
            
   \end{tikzpicture} 
\end{center}
\caption{The presentation of $y_{i-r_{s+1}}\ldots y_{i+n}=x_{i-r_{s+1}}\ldots x_i\sv_s\sw_s^-\overline{\sw_{s+1}}$.}
\label{Fig2}
\end{figure}

  \begin{itemize}
  \item[(I).] $j=i-r_{s+1}$. Then $y_{j+1}\ldots y_{j+n}=x_{j+1}\ldots x_i a_1\ldots a_{m_{s+1}}^-$. Note that $x_{j+1}\ldots x_i\lge \overline{a_1\ldots a_{i-j}}=\overline{a_1\ldots a_{r_{s+1}}}$. By Lemma \ref{lem:23} (i) it follows that 
  \[
  y_{i+1}\ldots y_{j+n}=a_1\ldots a_{m_{s+1}}^-\succ \overline{a_{r_{s+1}+1}\ldots a_n}. 
  \]
  This establishes (\ref{eq:08-1}) for $j=i-r_{s+1}$. 
  
  \item[(II).] $i-r_{s+1}<j<i$. Then $y_{j+1}\ldots y_i=x_{j+1}\ldots x_i\lge\overline{a_1\ldots a_{i-j}}$. Note that $i-j<r_{s+1}\le m_{s+1}$. Then by the primitivity of $\sv_{s+1}=a_1\ldots a_{m_{s+1}}$ it follows that 
  \[
  y_{i+1}\ldots y_{j+m_{s+1}}=a_1\ldots a_{j+m_{s+1}-i}\succ\overline{a_{i-j+1}\ldots a_{m_{s+1}}}.
  \]
  This proves (\ref{eq:08-1}) for $i-r_{s+1}<j<i$.
  
  \item[(III).] $i\le j<i+m_s$. Then (\ref{eq:08-1}) follows from the primitivity of $\sv_s=a_1\ldots a_{m_s}$, which implies 
  \[
  y_{j+1}\ldots y_{i+m_s}=a_{j-i+1}\ldots a_{m_s}\succ\overline{a_{1}\ldots a_{m_s-j+i}}.
  \]
  
  \item[(IV).] $i+m_s\le j<i+m_{s+1}$. Let  $t=j-(i+m_s)$. Then $0\le t<m_{s+1}-m_s=r_s$.  Note that $y_{j+1}\ldots y_{i+{m_{s+1}}}$ is a suffix of $\sw_s^-$.  Then by (\ref{eq:yixi-1}) and \eqref{eq:kong-1} it follows that 
  \begin{equation}\label{eq:30-1}
  y_{j+1}\ldots y_{i+m_{s+1}}\lge \overline{a_{t+1}\ldots a_{r_s}}\lge \overline{a_1\ldots a_{r_s-t}},
  \end{equation}
  where the second inequality follows since $a_1\ldots a_{r_s}$ is primitive by Lemma \ref{lem:23} (iii).
Note that, in view of \eqref{eq:kong-1}, the first inequality in \eqref{eq:30-1} is in fact strict if $\sw_s\succ \overline{a_1\dots a_{r_s}}^+$, so in this case we are done. Otherwise, we have $r_{s+1}=|\sw_{s+1}|>|\sw_s|=r_s$, so by (\ref{eq:yixi-1}) it follows that 
  \begin{equation}\label{eq:31-1}
  y_{i+m_{s+1}+1}\ldots y_{j+r_{s+1}}=a_1\ldots a_{r_{s+1}-r_s+t}\succ \overline{a_{r_s-t+1}\ldots a_{r_{s+1}}}.
  \end{equation}
  Here the inequality in (\ref{eq:31-1}) follows since $a_1\ldots a_{r_{s+1}}$ is primitive by Lemma \ref{lem:23} (iii).
Combining (\ref{eq:30-1}) and (\ref{eq:31-1}) we obtain (\ref{eq:08-1}) for $i+m_s\le j<i+m_{s+1}$.
\end{itemize} 
	We have now shown \eqref{eq:08-1} for all $0\leq j<i+m_{s+1}$. Hence, the proof is complete.
 \end{proof}

As a result of Lemma \ref{lem:F-A}, for some large enough $j$ (with $j<k$) we have $F_A^k(\sx)=\dots=F_A^{j+1}(\sx)=F_A^j(\sx)$. We now define
\[
	f_{n,k}^A(\sx):=F_A^k(\sx), \qquad \sx\in B_k(\vs_{q,n}).
\]
 
By Lemma \ref{lem:F-A}, $f_{n,k}^A$ maps $B_k(\vs_{q,n})$ into $B_k(\us_{q,n})$.  The next proposition now follows from a similar argument as in the proof of Proposition \ref{prop:left-continuity-positive}. 
	
\begin{proposition}\label{prop:left-continuity-zero-1}
Let $q\in\bb^L$ with $\al(q)=\sv_1\sw_1\sw_2\sw_3\ldots$ satisfying \eqref{eq:w-is-big}. If $|\sw_i|<|\sw_{i+1}|$ for infinitely many $i$ or $\sw_i\succ \overline{a_1\dots a_{r_i}}^+$ for infinitely many $i$, then 
 $
 \lim_{n\ra\f}h(\vs_{q,n})=\lim_{n\ra\f}h(\us_{q,n}).
 $
\end{proposition}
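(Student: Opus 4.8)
The plan is to mirror the proof of Proposition \ref{prop:left-continuity-positive} almost verbatim, using the map $f_{n,k}^A=F_A^k$ in place of $f_{n,k}$ and Lemma \ref{lem:F-A} in place of Lemma \ref{lem:24}. First I would record the inputs supplied by Case A: by hypothesis there are infinitely many integers $s$ with $|\sw_s|<|\sw_{s+1}|$ or $\sw_s\succ\overline{a_1\dots a_{r_s}}^+$, and for each such $s$ the integer $n=m_{s+2}=m_{s+1}+r_{s+1}$ lies in $\mathcal N(q)$ (so that Lemmas \ref{lem:22} and \ref{lem:23} and the whole construction preceding Lemma \ref{lem:F-A} are available), while $m_{s+2}\to\infty$ as $s\to\infty$ since $m_{i+1}=m_i+r_i>m_i$. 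It therefore suffices to prove, for each of these $n$, a bound $h(\vs_{q,n})\le h(\us_{q,n})+\ep_n$ with $\ep_n\to 0$ along this subsequence.

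Fix such an $n$, put $N:=[n/2]$, $\su:=a_1\dots a_n=\sv_{s+1}\sw_{s+1}$ and $\sv:=\sv_{s+1}$, and take $k>n$ (a multiple of $N$, without loss of generality, the last partial block costing only a bounded factor). By Lemma \ref{lem:F-A} the map $F_A=F_{\su,\sv}$ sends $B_k(\vs_{q,n})$ into itself and satisfies $i_\su(F_A(\sx))\ge i_\su(\sx)+n/2$; hence $f_{n,k}^A=F_A^k$ is well defined, and since any fixed point of $F_A$ avoids both $\su$ and $\overline\su$, it maps $B_k(\vs_{q,n})$ into $B_k(\us_{q,n})$. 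The combinatorial heart is the claim that $f_{n,k}^A$ is at most $(2N)^{k/N}$-to-one: fixing a target word $\sy\in B_k(\us_{q,n})$ and a length-$N$ window, the corresponding length-$N$ block of any preimage is either $y_{i+1}\dots y_{i+N}$, or its reflection, or a word obtained from one of these by undoing the single local substitution $\sv\mapsto\sv^-$ (resp. $\overline\sv\mapsto\overline\sv^+$), anchored at one of at most $N$ positions of the block, together with a reflection of a suffix of the block. The displacement estimate $i_\su(F_A(\sx))\ge i_\su(\sx)+n/2\ge i_\su(\sx)+N$ is precisely what guarantees that at most one such substitution can be active inside a window of length $N$, exactly as in the proof of Lemma \ref{lem:24}; so there are at most $2N$ possibilities per block, and multiplying over the $k/N$ blocks gives $\#B_k(\vs_{q,n})\le(2N)^{k/N}\#B_k(\us_{q,n})$.

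Dividing by $k$, letting $k\to\infty$, and recalling $N=[n/2]$ then yields
\[
h(\vs_{q,n})\le h(\us_{q,n})+\frac{\log 2N}{N}\le h(\us_{q,n})+\frac{\log n}{[n/2]}
\]
for each of the infinitely many admissible $n$ in $\mathcal N(q)$. Letting $n\to\infty$ along this subsequence, and using that $(h(\us_{q,n}))_n$ is nondecreasing while $(h(\vs_{q,n}))_n$ is nonincreasing, we obtain $\lim_n h(\vs_{q,n})\le\lim_n h(\us_{q,n})$; the reverse inequality is immediate from $\us_{q,n}\subseteq\vs_{q,n}$, and the two limits coincide.

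Since the substantive work has already been discharged in Lemma \ref{lem:F-A}, I do not expect a genuine obstacle here. The one point requiring a moment's care is the preimage count: in this case $|\sv|=m_{s+1}\ge n/2\ge N$, so, in contrast with Proposition \ref{prop:left-continuity-positive}, a window of length $N$ can contain at most one starting position of an active occurrence of $\sv$ or $\overline\sv$, and the bound of $2N$ preimages per block holds a fortiori. Everything else is the routine transcription of Proposition \ref{prop:left-continuity-positive}, with $F_A$, $f_{n,k}^A$ and Lemma \ref{lem:F-A} in the roles of $F$, $f_{n,k}$ and Lemma \ref{lem:24}.
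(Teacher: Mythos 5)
Your argument is correct and is essentially identical to the paper's: the paper dispenses with the proof by noting that it "follows from a similar argument as in the proof of Proposition \ref{prop:left-continuity-positive}," and you have simply written out that transcription, substituting $F_A$, $f_{n,k}^A$, and Lemma \ref{lem:F-A} for $F$, $f_{n,k}$, and Lemma \ref{lem:24}, with $n=m_{s+2}\in\mathcal N(q)$ ranging over an infinite subsequence. Your closing observation that $|\sv|=m_{s+1}\ge n/2$ (so that the $2N$-per-window preimage bound holds a fortiori) is a correct and sensible sanity check.
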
  
  
\medskip

{\noindent\bf Case B:} There is $s\in\N$ such that $\al(q)=\sv_s \sw_s^\f=a_1\dots a_{m_s}(\overline{a_1\ldots a_{r_s}}^+)^\f$.

\medskip

Note by the definition of $(\sv_i)$ that
$\sv_{s+j}=\sv_s\sw_s^j$ for any $j\in\N$.  Then by {(\ref{eq:alpha-sw})}, 
\begin{equation}\label{eq:alpha}
\al(q)=\sv_s\sw_s^\f=\sv_t\sw_s^\f=a_{1}\ldots a_{m_t}(\overline{a_1\ldots a_{r_s}}^+)^\f\quad\textrm{for any }t>s. 
\end{equation}
 
\begin{lemma} \label{lem:213}
Let $q\in\bb^L$ such that  $\al(q)=\sv_s\sw_s^\f=a_1\dots a_{m_s}(\overline{a_1\ldots a_{r_s}}^+)^\f$. Then 
\[
\al(q)\lge a_1\ldots a_{r_s}(\overline{a_1\ldots a_{r_s}}^+)^\f.
\]
\end{lemma}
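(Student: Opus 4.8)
The plan is to reduce the inequality to a comparison between two shifts of $\al(q)$, settle the generic case with Lemma~\ref{lem:22}(ii), and handle a residual case using the self-similar structure of $\al(q_{KL})$. Write $w:=a_1\dots a_{r_s}$, so that $\sw_s=\overline{a_1\dots a_{r_s}}^{+}=\overline{w}^{+}$ and $\al(q)=\sv_s\sw_s^\f$ with $\sv_s=a_1\dots a_{m_s}$ and $1\le r_s\le m_s$. Since $r_s\le m_s$, both $\al(q)$ and $w(\overline{w}^{+})^\f$ begin with the word $w$, so the desired inequality $\al(q)\lge w(\overline{w}^{+})^\f$ is equivalent to $\si^{r_s}(\al(q))\lge(\overline{w}^{+})^\f=\si^{m_s}(\al(q))$. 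If $r_s=m_s$ this is an equality (then $\sv_s=w$ and $\al(q)=w(\overline{w}^{+})^\f$), so I would assume $r_s<m_s$. First I would record that $w$ is primitive: this is Lemma~\ref{lem:23}(iii) applied with $m=m_s\in\mathcal N(q)$, $l=0$ and $r=r_s$, so that $\sv_{s+1}=\sv_s\sw_s=a_1\dots a_{m_{s+1}}$ is the word $a_1\dots a_n$ occurring in \eqref{eq:kong-1}.

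Next I would split on whether $(a_1\dots a_{r_s}^-)^\f\succ\al(q_{KL})$. If it does, then condition \eqref{eq:admissible-and-more} holds with $n=r_s$, and Lemma~\ref{lem:22}(ii) yields at once $a_1\dots a_{r_s}(\overline{a_1\dots a_{r_s}}^{+})^\f\prec\al(q)$, even strictly. So the only remaining possibility is $(a_1\dots a_{r_s}^-)^\f\lle\al(q_{KL})$, and I would first show this forces $w$ to be a prefix of $\al(q_{KL})=:\la_1\la_2\dots$. Indeed, $q\in\bb^L\subseteq(q_{KL},M+1]$ gives $\al(q)\succ\al(q_{KL})$; since $\al(q)$ begins with $w$, comparing the first $r_s$ digits gives $\la_1\dots\la_{r_s}\lle w$; and $\la_1\dots\la_{r_s}\prec w$ would give $(w^-)^\f\succ\al(q_{KL})$ (using that $\al(q_{KL})$ is not eventually periodic), contrary to assumption. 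Hence $w=\la_1\dots\la_{r_s}$ is a primitive prefix of $\al(q_{KL})$.

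To finish in this remaining case I would use the classical doubling structure of $\al(q_{KL})$, namely that a primitive prefix $W$ of $\al(q_{KL})$ and the next one $W'$ satisfy $W'=W\,\overline{W}^{+}$. With $W=w$ this says that $\al(q_{KL})$, read in blocks of length $r_s$, begins $w,\ \overline{w}^{+},\ \overline{w},\dots$; hence $\al(q_{KL})$ agrees with the target sequence $w(\overline{w}^{+})^\f=w\,\overline{w}^{+}\,\overline{w}^{+}\dots$ on exactly the first $3r_s-1$ digits, after which $w(\overline{w}^{+})^\f$ is larger (since $\overline{w}^{+}$ and $\overline{w}$ differ only in the last digit, where $\overline{w}^{+}\succ\overline{w}$). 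Let $P$ be the first position where $\al(q)$ differs from $\al(q_{KL})$; it is finite because $\al(q)$ is eventually periodic while $\al(q_{KL})$ is not, and $a_P>\la_P$. If $P\le 3r_s-1$, then $\al(q)$ agrees with $w(\overline{w}^{+})^\f$ on its first $P-1$ digits and exceeds it at position $P$, so $\al(q)\succ w(\overline{w}^{+})^\f$, as required.

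The step I expect to be the main obstacle is ruling out $P\ge 3r_s$, i.e.\ showing that $\al(q)$ cannot agree with $\al(q_{KL})$ through position $3r_s-1$ without already coinciding with $w(\overline{w}^{+})^\f$ (in which case equality holds). Here one has to bring in both the special form $\al(q)=\sv_s(\overline{w}^{+})^\f$ and the quasi-greedy constraint $\si^n(\al(q))\lle\al(q)$ from Lemma~\ref{l21}: the block $\overline{w}$ sitting in positions $2r_s+1,\dots,3r_s$ of $\al(q_{KL})$ differs from $\overline{w}^{+}$ only in its last digit, so $P\ge 3r_s$ would force this $\overline{w}$ to appear inside the period-$r_s$ tail $(\overline{w}^{+})^\f$ of $\al(q)$, and then $\si^n(\al(q))\lle\al(q)$ together with primitivity of $\sv_s$ pins $\sv_s$ down so tightly that $\al(q)=w(\overline{w}^{+})^\f$. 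I would carry this out by a finite case analysis on the residue $m_s\bmod r_s$ and on whether $m_s$ lies below $2r_s$, between $2r_s$ and $3r_s$, or at least $3r_s$, using only $\overline{w}^{+}\neq\overline{w}$; the rest of the argument uses only Lemmas~\ref{l21}, \ref{lem:22} and \ref{lem:23}.
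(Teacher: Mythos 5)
Your approach is genuinely different from the paper's, and while the opening moves are sound, the crucial final step is left unexecuted, as you yourself acknowledge. Your Case (a) — where $(a_1\dots a_{r_s}^-)^\infty\succ\al(q_{KL})$ — is a clean one-liner via Lemma~\ref{lem:22}(ii); and your argument in Case (b) that $w:=a_1\dots a_{r_s}$ must then be a prefix of $\al(q_{KL})$ (using strict monotonicity of $\al$, aperiodicity of $\al(q_{KL})$, and the quasi-greedy constraint from Lemma~\ref{l21}) is essentially correct. But from there you are left with showing that if $w=\la_1\dots\la_{r_s}$ is a primitive prefix of $\al(q_{KL})$ and $\al(q)=\sv_s(\overline{w}^+)^\infty$ with $m_s>r_s$, then $\al(q)\succ w(\overline{w}^+)^\infty$. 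You propose to do this via the ``doubling structure'' $W'=W\,\overline{W}^+$ of consecutive primitive prefixes of $\al(q_{KL})$ — a nontrivial fact you cite without justification — together with a bound on the first position $P$ where $\al(q)$ and $\al(q_{KL})$ diverge, to be settled by ``a finite case analysis on the residue $m_s\bmod r_s$.'' That last step is a plan, not an argument, and it is exactly where the real work lies; as written, the proof has a genuine gap.

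For comparison, the paper argues by contradiction and avoids any reference to $\al(q_{KL})$ or case splitting. Assuming $\al(q)\prec w(\overline{w}^+)^\infty$, one also has $(w^-)^\infty\prec\al(q)$ since $m_s\ge r_s$, so $\al(q)$ lies strictly between $(w^-)^\infty$ and $w(\overline{w}^+)^\infty$. Using primitivity of $w$ (Lemma~\ref{lem:23}(iii)) and the argument of \cite[Proposition 3.9]{Allaart_Baker_Kong_2017}, $\al(q)$ then belongs to the two-vertex labelled-graph subshift $X_{\mathcal G}$ of Figure~\ref{fig:1}. Because $\al(q)$ ends in $(\overline{w}^+)^\infty$, which is the self-loop at the left vertex, tracing back the path in $\mathcal G$ forces some shift $\si^j(\al(q))$ to equal $w(\overline{w}^+)^\infty$ exactly — and that shift then exceeds $\al(q)$, contradicting $\si^j(\al(q))\lle\al(q)$ from Lemma~\ref{l21}. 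This is uniform (no split on $\al(q_{KL})$), and the only external input is the $X_{\mathcal G}$ structure, which the paper cites precisely. If you want to salvage your route, the honest work is to prove the doubling property of $\al(q_{KL})$'s primitive prefixes and then carry out the $P\ge 3r_s$ exclusion in full; but the graph-subshift route is shorter and avoids both of those burdens.
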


\begin{proof}
Suppose on the contrary that $\al(q)=(a_i)\prec a_1\ldots a_{r_s}(\overline{a_1\ldots a_{r_s}}^+)^\f$. Then
\begin{equation}\label{eq:09-2}
(a_1\ldots a_{r_s}^-)^\f\prec \al(q)\prec a_1\ldots a_{r_s}(\overline{a_1\ldots a_{r_s}}^+)^\f,
\end{equation}
where the first inequality follows since $m_s\geq r_s$.
By Lemma \ref{lem:23} (iii), $a_1\ldots a_{r_s}$ is primitive. Thus, the same argument as in the proof of \cite[Proposition 3.9]{Allaart_Baker_Kong_2017} shows that $\al(q)\in X_{\mathcal G}$, where $X_{\mathcal G}$ is the subshift of finite type represented by the labeled graph $\mathcal G$ in Figure \ref{fig:1} (with $r:=r_s$).

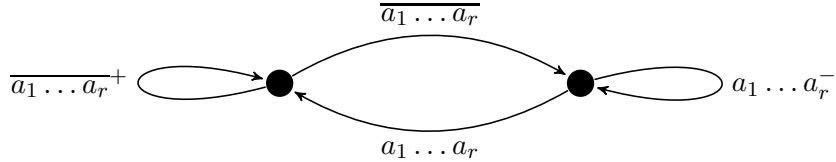
\begin{figure}[h!]
  \centering
 \begin{tikzpicture}[->,>=stealth',shorten >=1pt,auto,node distance=4cm,
                    semithick]

  \tikzstyle{every state}=[minimum size=0pt,fill=black,draw=none,text=black]

  \node[state] (A)                    { };
  \node[state]         (B) [ right of=A] { };

  \path[->,every loop/.style={min distance=0mm, looseness=60}]
   (A) edge [loop left,->]  node {$\overline{a_1\ldots a_{r}}^+$} (A)
            edge  [bend left]   node {$\overline{a_1\ldots a_{r}}$} (B)

        (B) edge [loop right] node {$a_1\ldots a_{r}^-$} (B)
            edge  [bend left]            node {$a_1\ldots a_{r}$} (A);
\end{tikzpicture}
  \caption{The picture of the labeled graph $\mathcal G$.}\label{fig:1}
\end{figure}
 
 Since $\al(q)=\sv_s\sw_s^\f$ ends with $\sw_s^\f=(\overline{a_1\ldots a_{r_s}}^+)^\f$, it follows from Figure \ref{fig:1} that 
 \[
 \si^j(\al(q))=a_1\ldots a_{r_s}(\overline{a_1\ldots a_{r_s}}^+)^\f
 \]
for some $j\geq 1$.
But then (\ref{eq:09-2}) gives $\si^j(\al(q)) \succ\al(q)$, contradicting Lemma \ref{l21}.
\end{proof}

In view of Lemma \ref{lem:213} we first consider the case $\al(q)=a_1\ldots a_{r_s}(\overline{a_1\ldots a_{r_s}}^+)^\f$. Here we could not find a suitable mapping $f_{n,k}$; instead we use a different method, based on ideas from \cite{Allaart_Baker_Kong_2017}.

 \begin{lemma} \label{lem:left-continuity-zero-3}
 Let $q\in\bb^L$ with $\al(q)=a_1\ldots a_{r}(\overline{a_1\ldots a_{r}}^+)^\f$, where $a_1\ldots a_{r}$ is primitive. Then 
\begin{equation}
\lim_{n\ra\f}h(\vs_{q,n})=\lim_{n\ra\f}h(\us_{q,n}). 
\label{equal-limits-bis}
\end{equation}
 \end{lemma}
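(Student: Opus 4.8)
The plan is to approximate $h(\vs_q)$ from both sides, exploiting that in this case $\al(q)=v(\overline v^+)^\f$ with $v:=a_1\ldots a_r$ primitive is \emph{eventually periodic}, so that $\vs_q$ is a sofic subshift. Since $\us_{q,n}\subseteq\vs_q\subseteq\vs_{q,n}$ for all $n$, and $h(\us_{q,n})$ is nondecreasing and $h(\vs_{q,n})$ nonincreasing in $n$, it suffices to prove $\lim_n h(\vs_{q,n})=h(\vs_q)=\lim_n h(\us_{q,n})$. Note also $h(\vs_q)=h(\us_q)>0$ by Proposition~\ref{prop:same-entropy} and $q>q_{KL}$.

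For the upper approximation I would argue exactly as in Proposition~\ref{prop:right-continuity}. One checks, using the primitivity of $v$, that the shifts of $\al(q)=v(\overline v^+)^\f$ satisfy the strict inequalities of Lemma~\ref{l34}(i), so that $q\in\ub$ and hence $\al(q)=\beta(q)$ (the greedy expansion of $1$) — \emph{unless} $\overline v^+=v$, i.e.\ $\al(q)=v^\f$ is purely periodic. In the former case $B_n(\vs_{q,n})=B_n(\vs_q)$ by \cite[Lemma~2.10]{Komornik-Kong-Li-2017}, whence $h(\vs_{q,n})\le n^{-1}\log\#B_n(\vs_q)\to h(\vs_q)$, while $h(\vs_{q,n})\ge h(\vs_q)$ since $\vs_q\subseteq\vs_{q,n}$. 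In the exceptional case $\al(q)=v^\f$ the shift $\vs_q$ is completely explicit — up to a bounded transient it is the block system of all concatenations of the two $r$-blocks $v$ and $v^-$, of entropy $(\log 2)/r$ — and both $h(\vs_{q,n})$ and $h(\us_{q,n})$ can be computed directly to converge to $(\log 2)/r$ (the restrictions defining $\us_{q,n}$ amount to capping the lengths of runs of equal blocks). So assume from now on that we are in the generic case.

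The crux is the lower approximation for $\us_{q,n}$. Write $n=(p+1)r+t$ with $0\le t<r$, so that
\[
a_1\ldots a_n=v(\overline v^+)^{p}\,\overline{a_1\ldots a_t},\qquad \overline{a_1\ldots a_n}=\overline v\,(v^-)^{p}\,a_1\ldots a_t .
\]
Inside $\vs_q$ every length-$n$ window is already $\preceq a_1\ldots a_n$ and $\succeq\overline{a_1\ldots a_n}$, so a point $x\in\vs_q$ fails to lie in $\us_{q,n}$ precisely when it contains $a_1\ldots a_n$ or $\overline{a_1\ldots a_n}$ as a factor. Since $a_1\ldots a_n$ contains $(\overline v^+)^{p}$ and $\overline{a_1\ldots a_n}$ contains $(v^-)^{p}$, any $x\in\vs_q$ that avoids both words $(\overline v^+)^{p+1}$ and $(v^-)^{p+1}$ belongs to $\us_{q,n}$. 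Hence
\[
h(\us_{q,n})\ \ge\ h\bigl(\vs_q\cap\{x:\ x\text{ avoids }(\overline v^+)^{p+1}\text{ and }(v^-)^{p+1}\}\bigr),\qquad p=p(n)\to\f .
\]
Now I would invoke the elementary fact that removing all occurrences of arbitrarily high powers of a fixed word from a subshift does not lower its entropy in the limit: applying this to a transitive (sofic) subsystem $X\subseteq\vs_q$ with $h(X)=h(\vs_q)>0$ gives $h\bigl(X\cap\{x:\ x\text{ avoids }W^{N}\}\bigr)\to h(X)$ as $N\to\f$, for $W=\overline v^+$ and for $W=v^-$. Concretely, first-return loops at a fixed vertex of a graph presenting $X$, of length $L\approx\tfrac12 e^{Nh(X)}$, that avoid $W^{N}$ (and $W^{N/2}$ as a prefix or suffix) make up more than half of all such loops, so the free monoid they generate embeds into the restricted shift and has entropy $\ge h(X)-O(1/L)$. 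Combining the last two displays yields $\liminf_n h(\us_{q,n})\ge h(\vs_q)$, and the reverse inequality is trivial; together with $\lim_n h(\vs_{q,n})=h(\vs_q)$ this proves \eqref{equal-limits-bis}.

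The step I expect to be most delicate is ensuring that $h(\vs_q)$ is realised on an irreducible subsystem $X$ and that the power-avoidance estimate is applied uniformly (so that the implied $o(1)$ depends only on $N$, not on $q$); the self-loop/high-power counting itself, and the reductions in the first two paragraphs, are routine.
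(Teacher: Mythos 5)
Your overall strategy — pass to a transitive graph-presented subsystem of $\vs_q$ and show that excising high powers of a short word does not lower the entropy in the limit — is the same idea the paper uses, but the paper carries it out concretely whereas your version has a logical error and an acknowledged hole. The paper shows $h(\vs_q)=\tfrac{\log 2}{r}$ by proving $X_{\mathcal G}\subseteq \vs_q$ and (using the decomposition of $\vs_q$ from the proof of \cite[Proposition~3.9]{Allaart_Baker_Kong_2017}) that $h(\vs_q)\le h(X_{\mathcal G})$; then it takes $X_{\mathcal G,n}\subseteq\us_{q,(n+2)r}$ (sequences in $X_{\mathcal G}$ avoiding $v(\overline v^+)^n$ and its reflection), whose entropy is the explicit $\tfrac{\log\varphi_n}{r}\to\tfrac{\log 2}{r}$ by \cite[Lemma~4.2]{Allaart_Baker_Kong_2017}, and finishes with right continuity exactly as you do. So no general ``power-avoidance does not drop entropy'' lemma is invoked, and the irreducibility issue you flag at the end is precisely what the cited decomposition result resolves; you leave it unresolved, and this is a real gap in your argument.

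The more concrete error is in your key inclusion. You write that since $a_1\dots a_n=v(\overline v^+)^p\overline{a_1\dots a_t}$ contains $(\overline v^+)^p$, any $x\in\vs_q$ avoiding $(\overline v^+)^{p+1}$ and $(v^-)^{p+1}$ lies in $\us_{q,n}$; this is backwards. To avoid $a_1\dots a_n$ it suffices (and is needed) to avoid the \emph{contained} word $(\overline v^+)^p$, a \emph{stronger} restriction than avoiding $(\overline v^+)^{p+1}$. As stated your claim is false: take $M=2$, $v=2$, $r=1$, so $\al(q)=21^\infty$, $\overline v^+=v^-=1$, and $n=p+1$, $a_1\dots a_n=21^p$. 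The sequence $x=(21^p0)^\infty$ lies in $\vs_q$, avoids $1^{p+1}$, yet contains $a_1\dots a_n$ at position $0$, so $x\notin\us_{q,n}$. The fix is to replace $p+1$ by $p$ throughout (the inclusion then holds and $p=p(n)\to\infty$ still), but as written the step that drives your lower bound fails. You also state without proof that $q\in\ub$ for the ``generic'' case; this is plausible but needs the verification you only gesture at, and in the purely periodic exceptional case $\al(q)=v^\infty$ you would be hand-waving past the same subtleties. In short, the paper and you share the same guiding idea, but the paper's explicit computation of $h(X_{\mathcal G})$ and $h(X_{\mathcal G,n})$ is what actually closes these gaps.
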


\begin{proof}
We claim that
\begin{equation} \label{eq:09-3}
h(\vs_q)=\frac{\log 2}{r}.
\end{equation} 
 
First, observe by Figure \ref{fig:1} that $X_{\mathcal G}\subseteq \vs_q$, and hence
\begin{equation}
h(\vs_q)\ge h(X_{\mathcal G})=\frac{\log 2}{r}.
\label{eq:below-by-XG}
\end{equation}
On the other hand, by Lemma \ref{lem:22} (ii) it follows that $(a_1\ldots a_{r}^-)^\f\lle \al(q_{KL})$, so 
$h(\us_{q,r})=0$.
Furthermore, by the argument from the proof of \cite[Proposition 3.9]{Allaart_Baker_Kong_2017}, any sequence in $\vs_q$ is either itself in $\us_{q,r}$ or else consists of a finite (possibly empty) prefix from $\us_{q,r}$ followed by a sequence from $X_\mathcal{G}$. Hence, by a standard argument, 
\begin{equation*}
h(\vs_q)\leq h(X_\mathcal{G}).
\end{equation*}
Combined with \eqref{eq:below-by-XG}, this yields \eqref{eq:09-3}.

Next, for $n\in\N$ let $q_n<q$ be the base such that $\al(q_n)=(a_1\ldots a_{r}(\overline{a_1\ldots a_{r}}^+)^n\overline{a_1\ldots a_{r}})^\f$. Then $q_n\nearrow q$ as $n\ra\f$. Let $X_{\mathcal G, n}$ be the set of those sequences $(x_i)\in X_{\mathcal G}$ containing neither the word $a_1\ldots a_{r}(\overline{a_1\ldots a_{r}}^+)^n$ nor its reflection. Then $X_{\mathcal G, n}\subseteq \us_{q_n}$, and (see \cite[Lemma 4.2]{Allaart_Baker_Kong_2017})
 \[
 h(X_{\mathcal G, n})=\frac{\log \varphi_n}{r},
 \]
 where $\varphi_n$ is the unique positive root of $1+x+\cdots+x^{n-1}=x^n$. Since $\varphi_n\nearrow 2$ as $n\ra\f$, by (\ref{eq:09-3}) it follows that 
 \[
{h(\us_{q_n})}\ge h(X_{\mathcal G, n})=\frac{\log \varphi_n}{r}\to\;\frac{\log 2}{r}=h(\vs_q).
 \]
This establishes the left-continuity of $H$ at $q$. To obtain the stronger result \eqref{equal-limits-bis}, 
note that $\us_{q_n}=\us_{q, (n+2)r}$. Hence, $\lim_{n\ra\f}h(\us_{q,n})\ge h(\vs_q)\ge h(\us_q)$. The reverse inequality is obvious, since $\us_{q,n}\subseteq\vs_q$ for all $n\ge 1$. We conclude that
 \[
 \lim_{n\ra\f}h(\us_{q,n})=h(\vs_q)=h(\us_q)=\lim_{n\ra\f}h(\vs_{q,n}),
 \]
 where the last equality follows from the right continuity of $H$ (see Proposition \ref{prop:right-continuity}). 
 \end{proof}
 
 Finally, we consider the case that $(a_i)=\al(q)\succ a_1\ldots a_{r_s}(\overline{a_1\ldots a_{r_s}}^+)^\f$. Then there exists an integer $\ell\ge 1$ such that 
 \begin{equation}\label{eq:09-4}
  a_1\ldots a_{(\ell+1) r_s}\succ a_1\ldots a_{r_s}(\overline{a_1\ldots a_{r_s}}^+)^\ell.
 \end{equation}
  Note by (\ref{eq:alpha}) that $\al(q)=\sv_t\sw_s^\f=a_1\ldots a_{m_t}(\overline{a_1\ldots a_{r_s}}^+)^\f$ for any $t>s$. Take $t\in\N$ such that $m_t>{\ell} r_s$.  Write for $n:=m_{t+\ell+1}=m_t+(\ell+1)r_s$ that 
  \[
  \su:=a_1\ldots a_n=\sv_t\sw_s^{\ell+1}=a_1\ldots a_{m_t}(\overline{a_1\ldots a_{r_s}}^+)^{\ell+1}.
  \]
	Furthermore, put
	\[
	\sv:=\sv_t \sw_s=\sv_{t+1}.
	\]
	With $\su$ and $\sv$ as above, define $F_B:=F_{\su,\sv}$ (see Definition \ref{def:general-F}).

Using (\ref{eq:09-4}) and by a similar reasoning as in the proof of Lemma \ref{lem:F-A} it can be shown that $F_B$ maps $B_k(\vs_{q,n})$ into $B_k(\vs_{q,n})$. Furthermore, the earliest possible occurrence of $\su$ or $\overline{\su}$ in $F_B(\sx)$ starts later than the earliest occurrence of $\su$ or $\overline{\su}$ in $\sx$.
This implies that, for some large enough $j$ (with $j<k$) we have $F_B^k(\sx)=\dots=F_B^{j+1}(\sx)=F_B^j(\sx)$. 
We now define
\[
f_{n,k}^B(\sx):=F_B^k(\sx), \qquad \sx\in B_k(\vs_{q,n}).
\]

By the above argument, $f_{n,k}^B$ maps $B_k(\vs_{q,n})$ into $B_k(\us_{q,n})$. 
Note that the length $|\sv|=|\sv_t\sw_s|=m_t+r_s\ge n/2$ (since $m_t>\ell r_s$). As in the proof of Proposition \ref{prop:left-continuity-positive} we can now prove that the map $f_{n,k}^B$ is at most $(2N)^{k/N}$-to-one, where $N:=[n/2]$. This gives

 \begin{lemma} \label{lem:left-continuity-zero-4}
 Let $q\in\bb^L$ with $\al(q)=\sv_s\sw_s^\f=a_1\ldots a_{m_s}(\overline{a_1\ldots a_{r_s}}^+)^\f$ for some $s\ge 1$. If 
 $\al(q)\succ a_1\ldots a_{r_s}(\overline{a_1\ldots a_{r_s}}^+)^\f$, then 
 $
 \lim_{n\ra\f}h(\vs_{q,n})=\lim_{n\ra\f}h(\us_{q,n}).
 $
 \end{lemma}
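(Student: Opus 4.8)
The plan is to mirror the structure of the proof of Proposition~\ref{prop:left-continuity-positive} (and its Case~A counterpart, Proposition~\ref{prop:left-continuity-zero-1}), but with the words $\su$ and $\sv$ chosen as in the displayed equations preceding the statement: $\su=a_1\ldots a_n=\sv_t\sw_s^{\ell+1}$ and $\sv=\sv_t\sw_s=\sv_{t+1}$, where $t>s$ is taken large enough that $m_t>\ell r_s$ (so that $n=m_t+(\ell+1)r_s<2(m_t+r_s)=2|\sv|$, giving $|\sv|\ge n/2$). First I would verify that $F_B=F_{\su,\sv}$ maps $B_k(\vs_{q,n})$ into itself. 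As in Lemma~\ref{lem:F-A}, by symmetry and the reflection identity $F_{\su,\sv}(\overline{\sx})=\overline{F_{\su,\sv}(\sx)}$ it suffices to handle the case $x_{i+1}\ldots x_{i+n}=\su$ at $i:=i_\su(\sx)$; writing $F_B(\sx)=y_1\ldots y_k$, Definition~\ref{def:general-F} gives $y_1\ldots y_k=x_1\ldots x_i\,\sv^-\,\overline{\sw_s\,x_{i+n+1}\ldots x_k}=x_1\ldots x_i\,\sv_t\,(\sw_s^-\ \text{suffix bookkeeping})\,\overline{\sw_s\cdots}$. Since the whole tail $x_{i+|\sv|+1}\ldots x_k$ is reflected, no word $\succ\su$ or $\prec\overline{\su}$ can appear past position $i+|\sv|$, so the task reduces to checking $\overline{\su}\prec y_{j+1}\ldots y_{j+n}\prec\su$ for $0\le j<i+|\sv|=i+m_t+r_s$.

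The upper inequality follows exactly as in Lemma~\ref{lem:F-A}: $y_1\ldots y_{i+m_t+r_s}=x_1\ldots x_{i+m_t+r_s}^-$, so minimality of $i$ handles $j<i+m_{s+1}-n$ style ranges, and $y_{j+1}\ldots y_{j+n}\preceq x_{j+1}\ldots x_{j+n}\preceq\su$ otherwise, with strictness coming from the decremented digit. For the lower inequality $y_{j+1}\ldots y_{j+n}\succ\overline{\su}$ I would split into the same four ranges as in Lemma~\ref{lem:F-A}: (I) $j=i-r_s$; (II) $i-r_s<j<i$; (III) $i\le j<i+m_t$; (IV) $i+m_t\le j<i+m_t+r_s$. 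Ranges (I)–(III) go through verbatim using the primitivity of $\sv_t=a_1\ldots a_{m_t}$ and of $\su=a_1\ldots a_n$ (Lemma~\ref{lem:23}(ii)) together with Lemma~\ref{lem:23}(i); the point of (IV) — and the only place where the hypothesis $\al(q)\succ a_1\ldots a_{r_s}(\overline{a_1\ldots a_{r_s}}^+)^\f$, i.e.\ \eqref{eq:09-4}, is used — is that the tail of $\sv^-=\sv_t\sw_s^-$ sitting in positions beyond $i+m_t$ is followed in $y$ by $a_1 a_2\ldots$, and comparing $a_1\ldots a_{(\ell+1)r_s}\succ a_1\ldots a_{r_s}(\overline{a_1\ldots a_{r_s}}^+)^\ell$ against the relevant reflected block $\overline{a_1\ldots a_{r_s-t}}\ldots$ yields the strict inequality; the primitivity of $a_1\ldots a_{r_s}$ from Lemma~\ref{lem:23}(iii) is what lets one pass from $\overline{a_{t+1}\ldots a_{r_s}}$ to $\overline{a_1\ldots a_{r_s-t}}$. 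Along the way one also records $i_\su(F_B(\sx))\ge i_\su(\sx)+|\sv|=i_\su(\sx)+m_t+r_s\ge i_\su(\sx)+n/2$, which guarantees $F_B^k$ stabilizes and hence $f_{n,k}^B:=F_B^k$ is well defined and maps $B_k(\vs_{q,n})$ into $B_k(\us_{q,n})$ (a stabilized word contains neither $\su$ nor $\overline{\su}$).

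Finally I would run the counting argument of Proposition~\ref{prop:left-continuity-positive} with $N:=[n/2]\le|\sv|$: for a target word $\sy\in B_k(\us_{q,n})$, each length-$N$ block of a preimage $\sx$ is either the corresponding block of $\sy$, its reflection, or differs from it in a single coordinate where some occurrence of $\sv$ (resp.\ $\overline{\sv}$) beginning a $\su$ (resp.\ $\overline{\su}$) was replaced by $\sv^-$ (resp.\ $\overline{\sv}^+$) — and Lemma~\ref{lem:F-A}'s analogue ensures later iterations leave such a block untouched — so there are at most $2N$ choices per block and at most $(2N)^{k/N}$ preimages in total. Hence $\#B_k(\vs_{q,n})\le(2N)^{k/N}\#B_k(\us_{q,n})$, and letting $k\to\f$ gives $h(\vs_{q,n})\le h(\us_{q,n})+\frac{\log 2N}{N}\le h(\us_{q,n})+\frac{\log n}{[n/2]}$. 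Since one gets such an $n=m_{t+\ell+1}$ for arbitrarily large $t$, letting $n\to\f$ along this sequence (and using monotonicity of $h(\us_{q,n})$ and $h(\vs_{q,n})$ in $n$) yields $\lim_n h(\vs_{q,n})\le\lim_n h(\us_{q,n})$, and the reverse inequality is immediate from $\us_{q,n}\subseteq\vs_q\subseteq\vs_{q,n}$. The main obstacle is case (IV) of the invariance lemma: extracting the strict inequality there hinges on correctly bookkeeping which suffix of $\sw_s^-$ and which prefix $a_1a_2\ldots$ sit in positions $i+m_t+1,\ldots,i+n$ of $y$, and on using \eqref{eq:09-4} and the primitivity of $a_1\ldots a_{r_s}$ in exactly the right way — this is the step the excerpt itself only sketches ("by a similar reasoning"), and it is where the hypothesis of the lemma actually enters.
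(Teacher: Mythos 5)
Your proposal takes the same route as the paper: define $F_B=F_{\su,\sv}$ with $\su=\sv_t\sw_s^{\ell+1}$ and $\sv=\sv_{t+1}$ (with $t$ chosen so $m_t>\ell r_s$), prove the analogue of Lemma~\ref{lem:F-A}, conclude that $f_{n,k}^B=F_B^k$ maps $B_k(\vs_{q,n})$ into $B_k(\us_{q,n})$, and run the $(2N)^{k/N}$-to-one counting argument of Proposition~\ref{prop:left-continuity-positive} with $N=[n/2]\le|\sv|$. Since the paper itself disposes of the invariance step with ``by a similar reasoning as in the proof of Lemma~\ref{lem:F-A},'' supplying the details is exactly what is needed.

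There is, however, a bookkeeping error in your case split. Here $n-|\sv|=\bigl(m_t+(\ell+1)r_s\bigr)-(m_t+r_s)=\ell r_s$, so minimality of $i$ only handles $j<i+|\sv|-n=i-\ell r_s$, and your case (I) boundary should be $j=i-\ell r_s$, not $j=i-r_s$; case (II) should then be $i-\ell r_s<j<i$. For $\ell>1$ your four ranges (I)--(IV) cover only $[\,i-r_s,\ i+m_t+r_s)$ and leave the interval $[\,i-\ell r_s,\ i-r_s)$ unchecked. This is not fatal: for all $j$ with $i-\ell r_s\le j<i$ one has $i-j\le\ell r_s<m_t$, so the prefix $y_{j+1}\ldots y_i=x_{j+1}\ldots x_i\succeq\overline{a_1\ldots a_{i-j}}$ together with primitivity of $\sv_t=a_1\ldots a_{m_t}$ (giving $a_1\ldots a_{m_t-(i-j)}\succ\overline{a_{i-j+1}\ldots a_{m_t}}$) already yields the required strict inequality, exactly as in your case (II), and the same reasoning handles $j=i-\ell r_s$. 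Incidentally, this means your appeal to Lemma~\ref{lem:23}(i) for case (I) is misplaced here (that lemma concerns $n(m)$ in the sense of \eqref{eq:kong-1}, which for $m=m_{t+1}$ is $m_{t+2}$, not $m_{t+\ell+1}$); primitivity of $\sv_t$ suffices precisely because $\ell r_s<m_t$. Your identification of case (IV), where \eqref{eq:09-4} must be invoked, as the genuinely delicate step is correct, and the remainder (stabilization, counting, passing to the limit) matches the paper.
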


 Combining Lemmas \ref{lem:213}, \ref{lem:left-continuity-zero-3} and \ref{lem:left-continuity-zero-4} we obtain:

 \begin{proposition} \label{prop:left-continuity-zero-2}
 Let $q\in\bb^L$ with $\al(q)=\sv_s\sw_s^\f=a_1\ldots a_{m_s}(\overline{a_1\ldots a_{r_s}}^+)^\f$ for some $s\ge 1$. Then 
 $
 \lim_{n\ra\f}h(\vs_{q,n})=\lim_{n\ra\f}h(\us_{q,n}).
 $
 \end{proposition}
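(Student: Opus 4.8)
The plan is to derive the proposition by combining Lemmas \ref{lem:213}, \ref{lem:left-continuity-zero-3} and \ref{lem:left-continuity-zero-4}, organized around the position of $\al(q)$ relative to the sequence $a_1\ldots a_{r_s}(\overline{a_1\ldots a_{r_s}}^+)^\f$. First I would note that, since $m_s\in\mathcal N(q)$ and $\sv_{s+1}=\sv_s\sw_s$ is primitive, Lemma \ref{lem:23}(iii) (applied with the roles of $m$ and $r$ taken by $m_s$ and $r_s$) shows that $a_1\ldots a_{r_s}$ is primitive. This is exactly the hypothesis needed to invoke Lemma \ref{lem:213}, which then gives
\[
\al(q)\lge a_1\ldots a_{r_s}(\overline{a_1\ldots a_{r_s}}^+)^\f .
\]
Thus there are only two cases to consider, and they are exhaustive.

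In the equality case $\al(q)=a_1\ldots a_{r_s}(\overline{a_1\ldots a_{r_s}}^+)^\f$, I would apply Lemma \ref{lem:left-continuity-zero-3} directly, with $r:=r_s$; its hypothesis that $a_1\ldots a_r$ be primitive is met by the remark just made, and it already yields the conclusion $\lim_{n\ra\f}h(\vs_{q,n})=\lim_{n\ra\f}h(\us_{q,n})$. In the strict-inequality case $\al(q)\succ a_1\ldots a_{r_s}(\overline{a_1\ldots a_{r_s}}^+)^\f$, I would apply Lemma \ref{lem:left-continuity-zero-4}, whose hypotheses are precisely $q\in\bb^L$, $\al(q)=\sv_s\sw_s^\f=a_1\ldots a_{m_s}(\overline{a_1\ldots a_{r_s}}^+)^\f$, and this strict inequality; it is proved there via the map $f_{n,k}^B=F_B^k$ built from $F_B=F_{\su,\sv}$ with $\su=\sv_t\sw_s^{\ell+1}$ and $\sv=\sv_t\sw_s$ for suitably large $t$, and it again gives the equality of the two entropy limits. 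Since Lemma \ref{lem:213} guarantees that one of these two cases occurs, the proposition follows.

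The genuine difficulty of the statement is entirely absorbed into the three lemmas being combined, so at the level of this proposition there is no real obstacle; the substantive work — establishing the lexicographic inequality of Lemma \ref{lem:213}, checking that $F_B$ maps $B_k(\vs_{q,n})$ into itself and pushes the first occurrence of $\su$ (or $\overline{\su}$) strictly to the right so that $f_{n,k}^B$ lands in $B_k(\us_{q,n})$ and is at most $(2N)^{k/N}$-to-one with $N=[n/2]$, and the separate $X_{\mathcal G}$/$X_{\mathcal G,n}$ approximation argument handling the degenerate case $\al(q)=a_1\ldots a_{r_s}(\overline{a_1\ldots a_{r_s}}^+)^\f$ — has already been carried out. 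If I were assembling the argument from scratch, the one point I would be careful about is verifying that the dichotomy coming out of Lemma \ref{lem:213} is genuinely exhaustive and that in each branch the standing hypotheses of the invoked lemma (primitivity of $a_1\ldots a_{r_s}$, membership $q\in\bb^L$, and the precise form of $\al(q)$) are all in force, after which the conclusion $\lim_{n\ra\f}h(\vs_{q,n})=\lim_{n\ra\f}h(\us_{q,n})$ is immediate.
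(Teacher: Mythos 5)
Your proposal is correct and follows exactly the paper's own proof, which simply states that the proposition is obtained by combining Lemmas \ref{lem:213}, \ref{lem:left-continuity-zero-3} and \ref{lem:left-continuity-zero-4}. You have correctly identified that Lemma \ref{lem:213} provides the exhaustive dichotomy, that the primitivity of $a_1\ldots a_{r_s}$ needed in both branches comes from Lemma \ref{lem:23}(iii), and that the equality and strict-inequality cases are settled by Lemmas \ref{lem:left-continuity-zero-3} and \ref{lem:left-continuity-zero-4} respectively.
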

 
 \begin{proof}[Proof of Theorem \ref{th:left-continuity}]
The theorem follows from  Propositions \ref{prop:left-continuity-positive}, \ref{prop:left-continuity-zero-1} and \ref{prop:left-continuity-zero-2}.  
\end{proof}

\section{Proof of Theorem \ref{thm:main2}} \label{sec:proof2}

We will use the following lemma for the Hausdorff dimension under H\"{o}lder continuous maps (cf.~\cite{Falconer_1990}).

\begin{lemma}\label{l26}
Let $f: (X, \rho_X)\ra (Y, \rho_Y)$ be a H\"{o}lder map between two metric spaces, i.e., there exist constants $C>0$ and $\xi>0$ such that
\[
\rho_Y(f(x), f(x'))\le C \rho_X(x, x')^\xi\quad\textrm{for any }x, x'\in X.
\]
Then $\dim_H f(X)\le \frac{1}{\xi}\dim_H X.$
\end{lemma}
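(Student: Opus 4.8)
The final statement to prove is Lemma \ref{l26}, the standard fact that a Hölder map of exponent $\xi$ cannot increase Hausdorff dimension by more than a factor $1/\xi$.

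My plan is to argue directly from the definition of Hausdorff measure and dimension. First I would fix any $s > \dim_H X$ and recall that this forces $\mathcal{H}^s(X) = 0$. I then want to show that $\mathcal{H}^{s/\xi}(f(X)) = 0$, which will give $\dim_H f(X) \leq s/\xi$; letting $s \searrow \dim_H X$ then yields the claim. To do this, take an arbitrary $\delta > 0$ and, using $\mathcal{H}^s(X) = 0$, choose a countable cover $\{U_i\}$ of $X$ with $\operatorname{diam}(U_i) \leq \delta$ (shrinking $\delta$ as needed so that $C\,\delta^\xi$ is small) and with $\sum_i \operatorname{diam}(U_i)^s < \varepsilon$ for a prescribed $\varepsilon > 0$. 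The Hölder condition gives $\operatorname{diam}(f(U_i \cap X)) \leq C \operatorname{diam}(U_i)^\xi$, so $\{f(U_i \cap X)\}$ is a cover of $f(X)$ by sets of diameter at most $C\delta^\xi =: \delta'$, and
\[
\sum_i \operatorname{diam}\bigl(f(U_i\cap X)\bigr)^{s/\xi} \leq \sum_i \bigl(C\operatorname{diam}(U_i)^\xi\bigr)^{s/\xi} = C^{s/\xi} \sum_i \operatorname{diam}(U_i)^s < C^{s/\xi}\varepsilon.
\]
Hence $\mathcal{H}^{s/\xi}_{\delta'}(f(X)) \leq C^{s/\xi}\varepsilon$, and since $\varepsilon$ is arbitrary (and $\delta' \to 0$ as $\delta \to 0$), we conclude $\mathcal{H}^{s/\xi}(f(X)) = 0$.

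The remaining bookkeeping is routine: if $\dim_H X = \infty$ there is nothing to prove, and otherwise the above shows $\dim_H f(X) \leq s/\xi$ for every $s > \dim_H X$, so $\dim_H f(X) \leq \frac{1}{\xi}\dim_H X$. There is no real obstacle here; the only point requiring a little care is making sure the diameters of the image sets actually tend to $0$ (so that we are genuinely estimating the Hausdorff measure and not merely a pre-measure at a fixed scale), which is immediate from $\operatorname{diam}(f(U_i\cap X)) \leq C\operatorname{diam}(U_i)^\xi$ and $\operatorname{diam}(U_i) \leq \delta \to 0$. Alternatively one could simply cite Falconer's book, as the parenthetical reference suggests, since this is a textbook lemma; but the self-contained argument above is short enough to include.
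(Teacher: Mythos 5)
Your argument is correct, and it is exactly the standard proof of this fact (Falconer, \emph{Fractal Geometry}, Proposition 2.3). The paper itself does not supply a proof of Lemma \ref{l26} --- it simply cites Falconer's book --- so there is no in-paper argument to compare against; you have accurately reproduced the cited textbook proof. The key steps (passing a $\delta$-cover of $X$ through $f$ to get a $C\delta^\xi$-cover of $f(X)$, then estimating $\sum_i \operatorname{diam}(f(U_i\cap X))^{s/\xi} \le C^{s/\xi}\sum_i\operatorname{diam}(U_i)^s$, and observing that $\delta'=C\delta^\xi\to 0$) are all in order, and the degenerate case $\dim_H X=\infty$ is handled. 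Nothing to fix.
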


It will be convenient to introduce a family of (mutually equivalent) metrics $\{\rho_q:q>1\}$ {on $\Omega$} defined by
\begin{equation*}
\rho_q((c_i),(d_i)):=q^{-\inf\{i\ge 1:c_i\neq d_i\}}, \qquad q>1.
\end{equation*}
Then $(\Omega, \rho_q)$ is a compact metric space.
Let $\dim_H^{(q)}$ denote Hausdorff dimension on $\Omega$ with respect to the metric $\rho_q$.
For $p>1$ and $q>1$,
\begin{equation*}
\rho_q((c_i),(d_i))=\rho_p((c_i),(d_i))^{\log q/\log p},
\end{equation*}
and {by Lemma \ref{l26}} this gives the useful relationship 
\begin{equation}
\dim_H^{(p)}E=\frac{\log q}{\log p}\dim_H^{(q)}E, \qquad E\subseteq\Omega.
\label{eq:24}
\end{equation}

The following result is well known (see \cite[Lemma 2.7]{Jordan-Shmerkin-Solomyak-2011} or \cite[Lemma 2.2]{Allaart-2017b}):

\begin{lemma} \label{l27}
For each $q\in(1, M+1)$, the map $\pi_q$ is Lipschitz on $(\Omega,\rho_q)$, and the restriction
\begin{equation*}
\pi_q: (\us_q,\rho_q)\to (\u_q,|.|);\qquad \pi_q((x_i))=\sum_{i=1}^\infty \frac{x_i}{q^i}
\end{equation*}
is bi-Lipschitz, where $|.|$ denotes the Euclidean metric on $\R$. In particular,
\[
\dim_H \u_q=\dim_H^{(q)} \us_q.
\]
\end{lemma}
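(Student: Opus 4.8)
The plan is to prove the two metric assertions by hand and then read off the dimension equality. The Lipschitz bound for $\pi_q$ is immediate: if $(c_i),(d_i)\in\Omega$ are distinct and $n:=\inf\{i\ge1:c_i\neq d_i\}$, so that $\rho_q((c_i),(d_i))=q^{-n}$, then
\[
|\pi_q((c_i))-\pi_q((d_i))|=\Big|\sum_{i=n}^\infty\frac{c_i-d_i}{q^i}\Big|\le\sum_{i=n}^\infty\frac{M}{q^i}=\frac{Mq}{q-1}\,q^{-n},
\]
so $\pi_q$ is Lipschitz on $(\Omega,\rho_q)$ with constant $Mq/(q-1)$, hence also on $(\us_q,\rho_q)$.

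For the bi-Lipschitz claim it remains to produce a constant $c>0$ with $|\pi_q((x_i))-\pi_q((y_i))|\ge c\,\rho_q((x_i),(y_i))$ for all $(x_i),(y_i)\in\us_q$, and I would show that $c=\tfrac{M}{q-1}-1$ works---this is positive precisely because $q<M+1$. Fix distinct $(x_i),(y_i)\in\us_q$, let $n$ be their first point of disagreement, and assume $x_n<y_n$. Peeling off the first $n$ coordinates and factoring out $q^{-n}$ gives
\[
q^n\big(\pi_q((y_i))-\pi_q((x_i))\big)=(y_n-x_n)+\pi_q(\sigma^n((y_i)))-\pi_q(\sigma^n((x_i))).
\]
By the lexicographic description in Lemma \ref{l23}, $\us_q$ is invariant under $\sigma$ and under reflection, so $\sigma^n((x_i)),\sigma^n((y_i))\in\us_q$; moreover $x_n<y_n\le M$ forces $x_n<M$, whence $\sigma^n((x_i))\prec\alpha(q)$, and $x_n<y_n$ forces $y_n>0$, whence $\sigma^n((y_i))\succ\overline{\alpha(q)}$, equivalently $\overline{\sigma^n((y_i))}\prec\alpha(q)$.

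The decisive ingredient is the estimate \emph{$z\in\us_q$ and $z\prec\alpha(q)$ imply $\pi_q(z)<1$}. Granting this, applying it to $\sigma^n((x_i))$ gives $\pi_q(\sigma^n((x_i)))<1$, and applying it to $\overline{\sigma^n((y_i))}$, together with the identity $\pi_q(\overline{w})=\tfrac{M}{q-1}-\pi_q(w)$, gives $\pi_q(\sigma^n((y_i)))>\tfrac{M}{q-1}-1$; since $y_n-x_n\ge1$, the displayed identity yields $q^n\big(\pi_q((y_i))-\pi_q((x_i))\big)>\tfrac{M}{q-1}-1>0$, which is both the sought lower bound and (incidentally) the injectivity of $\pi_q$ on $\us_q$, so that $\pi_q|_{\us_q}$ is a bijection onto $\u_q$. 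To prove the decisive ingredient I would use that a univoque sequence $z$ is, by uniqueness of expansions, the greedy $q$-expansion of $\pi_q(z)$; since $z\prec\alpha(q)\preceq\beta(q)$ (the greedy expansion $\beta(q)$ of $1$ dominates the quasi-greedy expansion $\alpha(q)$) and the greedy-expansion map $x\mapsto(\text{greedy }q\text{-expansion of }x)$ on $[0,M/(q-1)]$ is strictly increasing---a classical fact going back to R\'enyi \cite{Renyi} and Parry \cite{Parry_1960}---it follows that $\pi_q(z)<\pi_q(\beta(q))=1$. Once $\pi_q:(\us_q,\rho_q)\to(\u_q,|\cdot|)$ is known to be bi-Lipschitz, both it and its inverse are Lipschitz, and Lemma \ref{l26} with $\xi=1$ applied in each direction gives $\dim_H\u_q=\dim_H^{(q)}\us_q$.

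The hard part will be the decisive ingredient. Note that the naive statement ``$z\prec\alpha(q)\Rightarrow\pi_q(z)\le1$'' is \emph{false} for arbitrary $z\in\Omega$: for instance, with $M=1$ and $q=3/2$ one has $1001^\infty\prec\alpha(q)$ yet $\pi_q(1001^\infty)=\tfrac1q+\tfrac1{q^3(q-1)}>1$ (this sequence violates the criterion of Lemma \ref{l23}, since $\sigma^3(1001^\infty)=1^\infty\succ\alpha(q)$). Thus one must genuinely exploit that $z$ is univoque---equivalently, that $z$ coincides with the greedy expansion of its own value---together with the monotonicity of the greedy algorithm; everything else reduces to elementary manipulation of geometric series.
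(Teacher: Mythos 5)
Your proof is correct, and it fills in a complete derivation of a lemma that the paper itself only cites (from Jordan--Shmerkin--Solomyak and from Allaart). The upper Lipschitz bound and the algebraic reduction $q^n\bigl(\pi_q((y_i))-\pi_q((x_i))\bigr)=(y_n-x_n)+\pi_q(\sigma^n((y_i)))-\pi_q(\sigma^n((x_i)))$ are the standard route, and you use the shift- and reflection-invariance of $\us_q$ (which follow immediately from Lemma~\ref{l23}) exactly where they are needed. The place where a careless argument would go wrong is your ``decisive ingredient,'' and you handle it properly: you correctly observe that $z\prec\alpha(q)$ alone does \emph{not} force $\pi_q(z)\le 1$ (lexicographic order and $\pi_q$-value are not compatible on all of $\Omega$), and you instead exploit that every sequence in $\us_q$ is a greedy expansion together with the strict monotonicity of the greedy-expansion map and the inequality $\alpha(q)\preceq\beta(q)$. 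This gives $\pi_q(\sigma^n((x_i)))<1$ and, via the reflection identity $\pi_q(\overline{w})=\tfrac{M}{q-1}-\pi_q(w)$ applied to $\overline{\sigma^n((y_i))}\in\us_q$, also $\pi_q(\sigma^n((y_i)))>\tfrac{M}{q-1}-1$. The resulting lower Lipschitz constant $\tfrac{M}{q-1}-1$ is positive exactly on $(1,M+1)$, matching the hypothesis of the lemma, and the dimension equality then follows from Lemma~\ref{l26} applied in both directions (using that $\pi_q$ restricts to a bijection $\us_q\to\u_q$, which is anyway part of the definition of $\us_q$). This is, as far as one can tell, essentially the same argument as in the cited sources; you have simply made it self-contained.
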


Lastly, we need an analog of Proposition \ref{prop:same-entropy} for Hausdorff dimension.

\begin{lemma} \label{lem:equal-dimension}
For every $q\in(1,M+1]$, $\dim_H^{(q)} \us_q=\dim_H^{(q)} \vs_q$.
\end{lemma}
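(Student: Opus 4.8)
The plan is to deduce the equality of Hausdorff dimensions from the corresponding equality of entropies (Proposition \ref{prop:same-entropy}) together with the thermodynamic/variational-style bounds that already connect entropy and Hausdorff dimension for subshifts, carried out with respect to the metric $\rho_q$. First I would dispose of the easy inclusion: since $\us_q\subseteq\vs_q$ always, monotonicity of Hausdorff dimension gives $\dim_H^{(q)}\us_q\le\dim_H^{(q)}\vs_q$ immediately, so the whole content is the reverse inequality $\dim_H^{(q)}\vs_q\le\dim_H^{(q)}\us_q$.

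For the reverse inequality I would proceed through the approximating subshifts of finite type. The sets $\vs_{q,n}$ and $\us_{q,n}$ are SFTs, and for an SFT $X\subseteq\Omega$ one has the classical identity $\dim_H^{(q)}X=h(X)/\log q$ (this is the standard formula for the box/Hausdorff dimension of a self-similar-type symbolic set under the metric $\rho_q$, where cylinders of length $k$ have diameter $q^{-k}$ and the number of them is $\#B_k(X)$; for SFTs box and Hausdorff dimension coincide). Granting this, Theorem \ref{th:left-continuity} gives $\lim_n h(\vs_{q,n})=\lim_n h(\us_{q,n})$ for $q\in\bb^L$, hence $\lim_n \dim_H^{(q)}\vs_{q,n}=\lim_n\dim_H^{(q)}\us_{q,n}$. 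Since $\us_{q,n}\subseteq\vs_q\subseteq\vs_{q,n}$ and $\us_{q,n}$ increases to a set whose Hausdorff dimension is $\sup_n\dim_H^{(q)}\us_{q,n}=\lim_n\dim_H^{(q)}\us_{q,n}$ (countable stability of Hausdorff dimension from below, using $\us_{q,n}\subseteq\us_q$ so that also $\sup_n\dim_H^{(q)}\us_{q,n}\le\dim_H^{(q)}\us_q$, and $\us_{q,n}\subseteq\vs_q$ forces $\dim_H^{(q)}\vs_q$ between these), one gets $\dim_H^{(q)}\vs_q\le\lim_n\dim_H^{(q)}\vs_{q,n}=\lim_n\dim_H^{(q)}\us_{q,n}\le\dim_H^{(q)}\us_q$. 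This settles the case $q\in\bb^L$. For $q$ in the interior or left endpoint of an entropy plateau $(p_L,p_R]$, and for $q\le q_{KL}$, one uses that $H$ is constant there together with right-continuity (Proposition \ref{prop:right-continuity}) and Proposition \ref{prop:same-entropy}: approximate $q$ from the right by $p_k\searrow q$ with $\us_q\subseteq\us_{p_k}$, so $\dim_H^{(q)}\us_q\le\dim_H^{(q)}\us_{p_k}$; on the other hand each $\us_{p_k}$ (with $p_k\in\bb^L$ or handled by the previous case) satisfies $\dim_H^{(p_k)}\us_{p_k}=h(\us_{p_k})/\log p_k$, and rescaling the metric via \eqref{eq:24} plus $h(\us_{p_k})\to h(\us_q)$ gives $\dim_H^{(q)}\us_{p_k}\to h(\us_q)/\log q$; the analogous chain for $\vs_q$ pinned between $\us_{q,n}$ and $\vs_{q,n}$ finishes the argument. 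Finally the endpoint $q=M+1$ is trivial since $\us_q=\Omega$ minus a countable set.

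The main obstacle I anticipate is justifying cleanly the formula $\dim_H^{(q)}X=h(X)/\log q$ for the SFTs $\us_{q,n},\vs_{q,n}$ — i.e., that for these symbolic sets Hausdorff dimension equals the ``entropy divided by $\log q$'' and in particular that the natural covers by length-$k$ cylinders are essentially optimal. The upper bound $\dim_H^{(q)}X\le h(X)/\log q$ is the easy direction (just cover $X$ by the $\#B_k(X)$ cylinders of length $k$, each of $\rho_q$-diameter $q^{-k}$, and let $k\to\infty$, using $h(X)=\inf_k (\log\#B_k(X))/k$). The lower bound requires a mass distribution / Frostman argument on the SFT, which is standard but must be stated carefully; alternatively, one can bypass the lower bound entirely by noting that for the final chain of inequalities only the upper bound $\dim_H^{(q)}\vs_{q,n}\le h(\vs_{q,n})/\log q$ together with the already-proved Theorem \ref{th:left-continuity} and the trivial inclusions is needed, provided one controls $\dim_H^{(q)}\us_q$ from below by $\sup_n h(\us_{q,n})/\log q$ — and the latter lower bound on $\dim_H^{(q)}\us_q$ can be obtained using Lemma \ref{l27} (bi-Lipschitz equivalence of $\us_q$ with $\u_q$) combined with a known lower bound for $\dim_H\u_q$ in terms of the entropies of the approximating SFTs, or directly by a Frostman measure supported on $\us_{q,n}\subseteq\us_q$. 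I would adopt whichever of these two routes keeps the exposition shortest, most likely: prove the two-sided estimate for SFTs once as a lemma, then feed Theorem \ref{th:left-continuity} through it.
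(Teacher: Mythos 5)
Your claimed ``easy inclusion'' $\us_q\subseteq\vs_q$ is false in general, and this is a real gap, not a slip: for any $q<M+1$ the constant sequence $M^\infty$ lies in $\us_q$ (it is the unique expansion of $M/(q-1)$, and Lemma~\ref{l23} imposes no upper constraint at positions where $x_n=M$), but $M^\infty\notin\vs_q$ because $M^\infty\succ\alpha(q)$. The same goes for $0^\infty$, $M^k 0^\infty$, and so on. So you cannot get $\dim_H^{(q)}\us_q\le\dim_H^{(q)}\vs_q$ from monotonicity. What the paper actually uses here is the covering \eqref{eq:26} of $\us_q$ by a \emph{countable union of affine copies} of $\vs_q$, combined with countable stability of Hausdorff dimension and the fact that each affine map $g_i$ is Lipschitz with respect to $\rho_q$. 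That is the correct substitute for the inclusion you wanted.

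For the reverse inequality $\dim_H^{(q)}\vs_q\le\dim_H^{(q)}\us_q$ you construct a long detour through the SFT approximants $\us_{q,n},\vs_{q,n}$, the formula $\dim_H^{(q)}X=h(X)/\log q$ for SFTs, and Theorem~\ref{th:left-continuity}. This misses a one-line argument that was already established in the proof of Proposition~\ref{prop:same-entropy}: $\vs_q\setminus\us_q$ consists only of sequences eventually equal to $\alpha(q)$ or $\overline{\alpha(q)}$, so it is countable, hence has Hausdorff dimension zero, and therefore $\dim_H^{(q)}\vs_q=\dim_H^{(q)}\bigl((\vs_q\cap\us_q)\cup(\vs_q\setminus\us_q)\bigr)\le\dim_H^{(q)}\us_q$. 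Note the contrast with entropy: a countable set can carry positive topological entropy, which is why Proposition~\ref{prop:same-entropy} needed the three-case analysis, but a countable set always has Hausdorff dimension zero, so the dimension version is strictly easier, not harder. Your detour also leaves loose ends: Theorem~\ref{th:left-continuity} only directly covers $q\in\bb^L$, and your patch for plateaus and $q\le q_{KL}$ via right-approximation and metric rescaling is sketched rather than carried out, whereas none of that machinery is needed at all.
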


\begin{proof}
The proof is similar to that of Proposition \ref{prop:same-entropy}, but easier: By \eqref{eq:26} and the countable stability of Hausdorff dimension, we have $\dim_H^{(q)} \us_q\leq \dim_H^{(q)} \vs_q$. The reverse inequality follows since $\vs_q\backslash \us_q$ is countable.
\end{proof}

\begin{proof}[Proof of Theorem \ref{thm:main2}]
Consider first the case when $\alpha(q)=(a_1\dots a_m^-)^\infty$ for a primitive word $a_1\dots a_m$. Here $\vs_q$ can be written in finite terms as
\[
\vs_q=\{(x_i)\in\Omega: \overline{a_1\dots a_m}\preceq x_{n+1}\dots x_{n+m}\preceq a_1\dots a_m\ \mbox{for all}\ n\geq 0\},
\]
so $\vs_q$ is a subshift of finite type. It is well known (see, for instance, \cite{Kenyon_Peres_1991}) that the Hausdorff dimension of a subshift of finite type is given by its topological entropy. Thus, using Lemmas \ref{l26} and \ref{l27}, we obtain
\[
\dim_H \u_q=\dim_H^{(q)} \us_q=\dim_H^{(q)} \vs_q=\frac{h(\vs_q)}{\log q}=\frac{h(\us_q)}{\log q},
\]
where we also used Proposition \ref{prop:same-entropy} and Lemma \ref{lem:equal-dimension}.
 
Next, let $q\in\overline{\ub}$ and write $\al(q)=(a_i)$. Then by Lemma \ref{lem:admissible-words} there is a sequence of points $(q_n: n\in\N)$ such that $q_n$ increases to $q$ and for each $n$, $\alpha(q_n)=(a_1\dots a_{m_n}^-)^\f$ for some integer $m_n$ such that $a_1\dots a_{m_n}$ is primitive. So by the first case above and Theorem \ref{thm:main1},
\[
\dim_H^{(q)} \us_q \geq \dim_H^{(q)} \us_{q_n}=\frac{h(\us_{q_n})}{\log q}\to \frac{h(\us_q)}{\log q}.
\]
On the other hand, for any set $E\subseteq \Omega^\N$ we have $\dim_H^{(q)}\leq h(E)/\log q$, and so
\[
\dim_H^{(q)} \us_q \leq \frac{h(\us_q)}{\log q}.
\]
Hence, by Lemma \ref{l27},
\[
\dim_H \u_q=\dim_H^{(q)} \us_q=\frac{h(\us_q)}{\log q}.
\]

Finally, let $q\in(q_{KL},M+1]\backslash \overline{\ub}$. Then $q$ lies in a connected component $(q_0,q_1)$ of $(q_{KL},M+1]\backslash \overline{\ub}$. It was shown in \cite{Kong_Li_2015} that $h(\us_q)$ is constant on $[q_0,q_1)$ and 
\[
\dim_H \u_q=\frac{h(\vs_{q_0})}{\log q}=\frac{h(\us_{q_0})}{\log q}=\frac{h(\us_q)}{\log q}.
\]
This completes the proof.
\end{proof}


\end{document}